\title{{Derived Stone Embedding}}
\date{\today}
\author{Amos Kaminski\thanks{Department of Mathematics, Weizmann Institute of Science \\ amos.kaminski@weizmann.ac.il}}
\begin{document}

\setlength{\abovedisplayskip}{3pt}
\setlength{\belowdisplayskip}{3pt}
\setlength{\abovedisplayshortskip}{0pt}
\setlength{\belowdisplayshortskip}{0pt}

\maketitle
\begin{center}
\end{center}
\begin{abstract}
A classical result, the Stone embedding, characterizes profinite sets as totally disconnected, compact Hausdorff spaces. Building on \cite{pyknotic}, which introduced a derived Stone embedding of the pro-category of $\pi$-finite spaces into pyknotic spaces, this paper uses the $\infty$-topoi machinery to partially characterize the essential image of this embedding, extending the classical characterization to the derived setting.

\end{abstract}

\setcounter{tocdepth}{2}
$\\$
\begin{figure}[H]
  \centering{}
  \setlength{\fboxsep}{-5pt}
  \setlength{\fboxsep}{5pt}
  \frame{\includegraphics[scale=0.14]{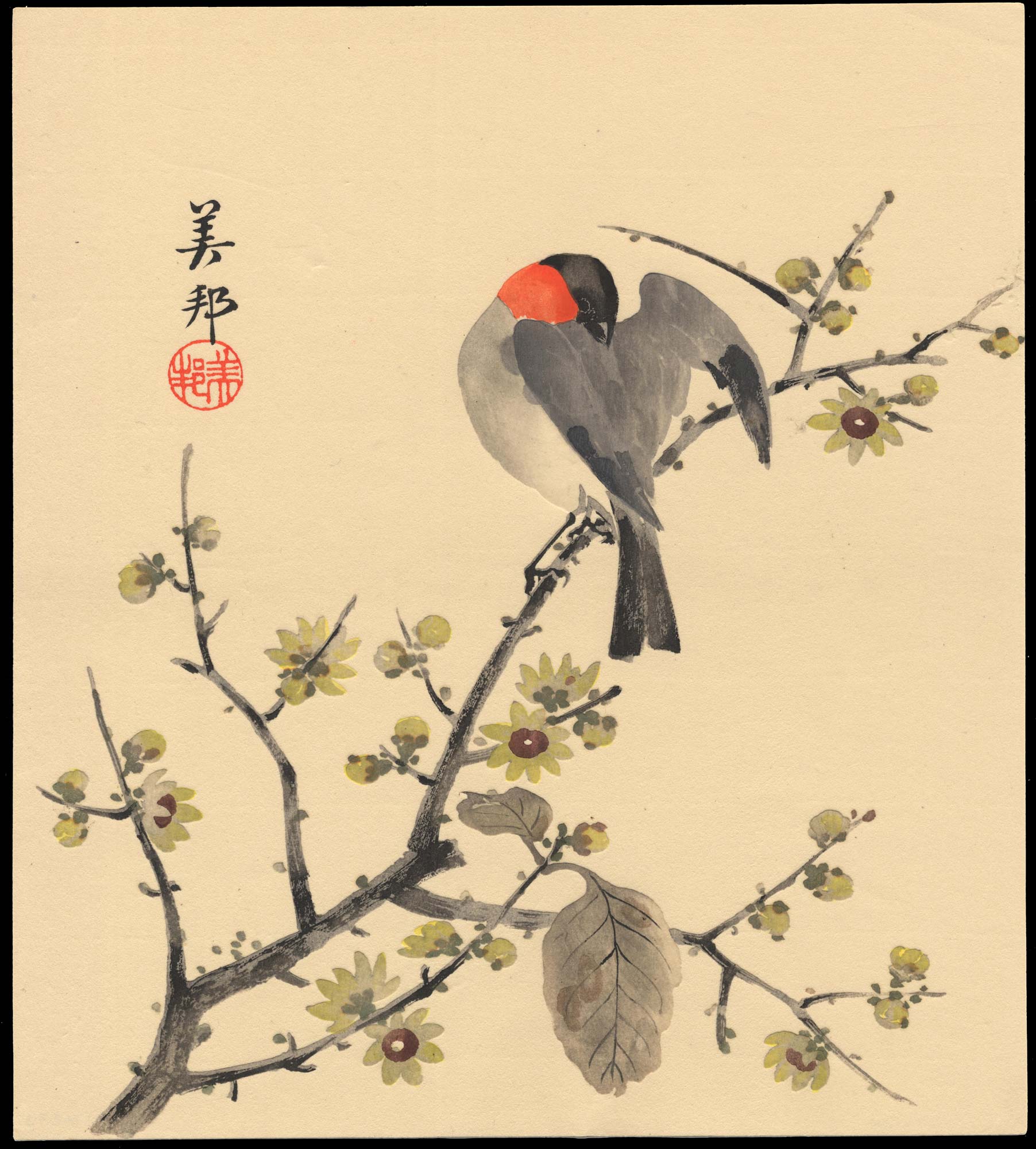}}
  \caption{\footnotesize 
"Bird On Branch (1)" by Takahashi, Biho\\}
    
\end{figure}
\newpage
\begingroup
\renewcommand{\baselinestretch}{1.2}\selectfont
\setlength{\parskip}{6pt}
\tableofcontents
\endgroup
\newpage

\section{Introduction}
\subsection*{Profinite Homotopy Theory}
Profinite methods, exemplified by the \emph{arithmetic square}, allow one to reconstruct a space from its profinite and rational components. For instance:

\begin{theorem}[{\cite[Theorem 7]{lurie_mit}}]
Let $X$ be a simply connected space with finitely generated homotopy groups. Then $X \simeq X_{\mathbb{Q}} \times_{\left(\prod_p \widehat{X}_p\right)_{\mathbb{Q}}} \prod_p \widehat{X}_p.
$
\end{theorem}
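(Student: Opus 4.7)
The plan is to exhibit the stated equivalence as the arithmetic fracture pullback square with corners $X$, $\prod_p \widehat{X}_p$, $X_{\mathbb{Q}}$, and $(\prod_p \widehat{X}_p)_{\mathbb{Q}}$, glued along the rationalization and profinite completion maps, and then to verify that this square is a homotopy pullback under the finite-generation hypothesis.

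First I would reduce to an algebraic statement: for any finitely generated abelian group $A$, the sequence
\[ 0 \to A \to (A \otimes \mathbb{Q}) \oplus \prod_p \widehat{A}_p \to \left(\prod_p \widehat{A}_p\right) \otimes \mathbb{Q} \to 0 \]
is short exact. For the free summand $A \cong \mathbb{Z}^r$ this is the classical identification of $\mathbb{Z}$ as the pullback of $\mathbb{Q}$ and $\widehat{\mathbb{Z}}$ over the ring of finite adeles; for a torsion summand $T$, the sequence degenerates since $T \otimes \mathbb{Q} = 0$ and $T$ is already equal to the product of its $p$-completions at the relevant primes. Finite generation is essential, as without it $\prod_p \widehat{A}_p$ can differ from the usual completion and $\lim^1$ obstructions appear.

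Next I would promote this to the Eilenberg-MacLane level. For each Postnikov layer $K(\pi_n X, n)$ with $n \geq 2$, rationalization produces $K(\pi_n X \otimes \mathbb{Q}, n)$ and $p$-completion produces $K(\widehat{(\pi_n X)}_p, n)$; feeding the short exact algebraic sequence above into the long exact sequence of homotopy groups then exhibits the corresponding square of Eilenberg-MacLane spaces as a homotopy pullback. I would then induct up the Postnikov tower of $X$: since $X$ is simply connected, hence nilpotent, its tower consists of principal $K(\pi_n X, n+1)$-fibrations, and pullback squares of pullback squares are pullback squares, so the arithmetic square is preserved at each stage.

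The main obstacle will be the passage to the limit: verifying that both rationalization and the infinite product of $p$-completions commute with the inverse limit defining $X$ from its Postnikov tower. This is precisely where the finitely-generated-homotopy hypothesis earns its keep, either via an explicit Mittag-Leffler/$\lim^1$ vanishing argument or, equivalently, by appealing to the fact that a simply connected space of finite type is nilpotent of finite type, so that localization and profinite completion can be computed layer by layer and assembled into the claimed fracture equivalence.
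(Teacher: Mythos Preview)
The paper does not prove this theorem: it appears only in the introduction as a motivating result, cited from Lurie's MIT lecture notes without any accompanying argument. There is therefore no ``paper's own proof'' to compare your sketch against.

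For what it is worth, your outline is the standard approach to the arithmetic fracture square: reduce to the algebraic pullback square for finitely generated abelian groups, verify it on Eilenberg--MacLane layers, and climb the Postnikov tower using simple connectivity (or more generally nilpotence) together with finite type to control the inverse limit. The sketch is sound as a high-level plan; the places that would need genuine work in a full proof are exactly the ones you flag, namely the commutation of rationalization and $p$-completion with the Postnikov limit and the $\lim^1$ vanishing. But since the paper treats this result as background and defers entirely to the cited reference, there is nothing further to compare.
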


The profinite part is naturally studied in the pro-category $\operatorname{Pro}(S_\pi)$, which lacks the structure of an $\infty$-topos making it hard to work with. To address this issue, we would like to embed $\operatorname{Pro}(S_\pi)$ into an $\infty$-topos.
In {\cite[Example 3.3.10]{pyknotic}}(and independently by Scholze and Clausen in \cite{condensed} as condensed objects), pyknotic spaces denoted $\mathbf{Pyk}(\mathcal{S})$, were introduced as hypercomplete sheaves over profinite sets.
It was shown in {\cite[Example 3.3.10]{pyknotic}} that $\operatorname{Pro}(S_\pi)$ can be embedded into $\mathbf{Pyk}(\mathcal{S})$. In fact, $\mathbf{Pyk}(\mathcal{S})$ is in some sense the minimal $\infty$-topos containing $\operatorname{Pro}(S_\pi)$, serving as its $\infty$-topos completion. In this text, we will further analyze this embedding, thereby enhancing this approach and providing a good framework for profinite homotopy theory.

\subsection*{Stone Embedding}
A classical result, known as the \textbf{Stone Embedding}, establishes an embedding of the pro-category of finite sets into the category of topological spaces and characterizes the essential image:
\begin{theorem}[{{\cite[Theorem 2.3 - Page 236]{stonem}}}]
There is an equivalence of categories between the category $\mathbf{Stone}$ of Stone spaces (compact, totally disconnected, Hausdorff spaces) and the pro-category $\mathbf{Pro}(\mathbf{Fin})$ of finite sets.
\end{theorem}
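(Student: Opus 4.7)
The plan is to construct a functor $\Phi : \mathbf{Pro}(\mathbf{Fin}) \to \mathbf{Stone}$ by sending a cofiltered system $(X_i)_{i \in I}$ of finite sets, regarded as finite discrete topological spaces, to the limit $\lim_i X_i$ computed in $\mathbf{Top}$. My first step would be to verify that $\Phi$ lands in $\mathbf{Stone}$: this limit is a closed subspace of the product $\prod_i X_i$, and the product of compact (Tychonoff), Hausdorff, totally disconnected spaces is again of that kind, all three properties being inherited by closed subspaces.

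Next I would establish fully faithfulness by computing, for Stone spaces $X = \lim_i X_i$ and $Y = \lim_j Y_j$,
\[
\operatorname{Hom}_{\mathbf{Stone}}(X, Y) \;\cong\; \lim_j \operatorname{Hom}_{\mathbf{Top}}(X, Y_j) \;\cong\; \lim_j \operatorname{colim}_i \operatorname{Hom}_{\mathbf{Fin}}(X_i, Y_j),
\]
the right-hand side being the definition of morphisms in $\mathbf{Pro}(\mathbf{Fin})$. The nontrivial identification is the second one: a continuous map from the compact space $X$ to a finite discrete $Y_j$ pulls back the partition of $Y_j$ to a finite clopen partition of $X$, and by compactness, together with cofilteredness of $I$, this partition is refined by the one induced by the projection $X \to X_i$ for some $i$, so $f$ factors through a finite stage.

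For essential surjectivity, given a Stone space $X$ I would consider the cofiltered poset $P(X)$ of finite clopen partitions of $X$ ordered by refinement; each $\mathcal{P} \in P(X)$ yields a finite discrete quotient $X/\mathcal{P}$, and these assemble into a pro-object $(X/\mathcal{P})_{\mathcal{P}}$. The canonical comparison $X \to \lim_{\mathcal{P}} X/\mathcal{P}$ is a continuous map between compact Hausdorff spaces, so it suffices to prove it is a bijection: surjectivity is a finite intersection argument on compatible systems of preimages using compactness, while injectivity reduces to the statement that any two distinct points of $X$ are separated by a clopen subset.

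The main obstacle is this last separation property used for injectivity: that in a compact Hausdorff totally disconnected space the clopen neighborhoods of a point form a neighborhood basis, equivalently that the quasi-component of each point (the intersection of all clopens containing it) reduces to the point itself. The standard route is to prove in a compact Hausdorff space that quasi-components coincide with connected components and then invoke total disconnectedness; once this is in hand, both injectivity and the cofinality of clopen partitions in $P(X)$ fall out, closing the argument.
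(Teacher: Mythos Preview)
The paper does not supply its own proof of this theorem: it is quoted in the introduction as a classical result and attributed to \cite[Theorem 1.4.11]{stonem}, so there is nothing in the paper to compare your argument against beyond the bare statement. Your outline is the standard proof and is correct in substance.

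A couple of minor remarks on presentation. In the fully faithful step, the assertion that a clopen in $X=\lim_i X_i$ is pulled back from some $X_i$ deserves one explicit sentence: the cylinder sets $p_i^{-1}(S)$ form a basis of clopens, a clopen is compact-open hence a finite union of cylinders, and cofilteredness lets you push them all to a single index. In the essential surjectivity step, the poset $P(X)$ is cofiltered simply because the common refinement of two clopen partitions is again a clopen partition; this does not need the separation property, so your closing sentence slightly overstates what hinges on the quasi-component argument. What genuinely depends on ``quasi-components $=$ components'' is exactly injectivity of $X\to\lim_{\mathcal P}X/\mathcal P$, and your identification of that as the one nontrivial point is right.
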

\emph{Pyknotic spaces} provide a natural setting for studying these topological phenomena within an $\infty$-categorical framework.
Building upon this foundation, our paper aims to extend the Stone embedding to the $\infty$-categorical setting. A natural extension of profinite sets to the derived setting is the pro-category of $\pi$-finite spaces. As observed in \cite{pyknotic}, there exists a derived Stone embedding:

\begin{theorem}[{{\cite[Example 3.3.10]{pyknotic}}}]
There is a canonical fully faithful embedding
\[
\operatorname{Pro}(S_\pi) \hookrightarrow \mathbf{Pyk}(\mathcal{S}),
\]
of the pro-category of $\pi$-finite spaces into the category $\mathbf{Pyk}(\mathcal{S})$ of pyknotic spaces.
\end{theorem}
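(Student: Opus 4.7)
The plan is to construct the embedding via the universal property of the pro-category and then reduce full faithfulness, through standard pro-category manipulations, to a sheaf-theoretic computation of sections of constant pyknotic sheaves over Stone spaces.

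\emph{Construction.} The unique geometric morphism $\mathbf{Pyk}(\mathcal{S}) \to \mathcal{S}$ yields a constant-sheaf functor $c\colon \mathcal{S} \to \mathbf{Pyk}(\mathcal{S})$, left adjoint to global sections $\Gamma$. This $c$ is already fully faithful because $\Gamma c \simeq \operatorname{id}_\mathcal{S}$: evaluating $c(Y)$ at the one-point Stone space returns $Y$. Since $\mathbf{Pyk}(\mathcal{S})$ is an $\infty$-topos it admits all small limits, and the universal property of $\operatorname{Pro}(S_\pi)$ as the free completion of $S_\pi$ under cofiltered limits extends $c|_{S_\pi}$ uniquely to a cofiltered-limit-preserving functor
\[
F\colon \operatorname{Pro}(S_\pi) \to \mathbf{Pyk}(\mathcal{S}), \qquad \{X_i\} \longmapsto \lim_i c(X_i).
\]

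\emph{Reduction of full faithfulness.} Using the standard formula $\operatorname{Map}_{\operatorname{Pro}(S_\pi)}(\{X_i\},\{Y_j\}) \simeq \lim_j \operatorname{colim}_i \operatorname{Map}_{S_\pi}(X_i, Y_j)$ together with the fact that $\operatorname{Map}_{\mathbf{Pyk}}(-, -)$ preserves limits in the second variable, the problem reduces to establishing, for every $\pi$-finite $Y$ and every pro-diagram $\{X_i\}$, the ``cocompactness'' identity
\[
\operatorname{Map}_{\mathbf{Pyk}}\bigl(F(\{X_i\}),\, c(Y)\bigr) \;\simeq\; \operatorname{colim}_i \operatorname{Map}_{S_\pi}(X_i, Y).
\]

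\emph{Key computation and principal obstacle.} When each $X_i$ is a finite set one has $c(X_i) \simeq h_{X_i}$, the Yoneda image of the Stone space $X_i$, so $F(\{X_i\}) \simeq h_S$ for $S = \lim_i X_i$, and Yoneda reduces the target identity to
\[
c(Y)(S) \;\simeq\; \operatorname{colim}_i \operatorname{Map}(X_i, Y).
\]
For $Y$ a finite set this is the classical description of global sections of a constant sheaf as locally constant functions. For $Y$ an arbitrary $\pi$-finite space I would induct along the finite Postnikov tower, at each stage confronting a cohomological continuity statement of the form $H^{\ast}(S; \underline{\pi}) \simeq \operatorname{colim}_i H^{\ast}(X_i; \underline{\pi})$ for finite (possibly twisted) coefficient systems; this is exactly the point where hypercompleteness of $\mathbf{Pyk}(\mathcal{S})$ is essential, since it controls the derived descent along the profinite presentation $S = \lim X_i$. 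This continuity, with its full $\infty$-categorical compatibility rather than a statement only on homotopy groups, is the main obstacle. Once it is secured, extension from pro-finite-sets to arbitrary pro-$\pi$-finite diagrams follows by approximating each $\pi$-finite $X_i$ by a finite simplicial model and invoking the preservation of the relevant (co)limits by $F$ and by the functor $\operatorname{Map}_{\mathbf{Pyk}}(-, c(Y))$.
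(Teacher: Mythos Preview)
Your strategy is coherent but leaves the decisive step open: you correctly isolate the cocompactness identity $c(Y)(S)\simeq\operatorname{colim}_i\operatorname{Map}(X_i,Y)$ as the heart of the matter, then explicitly flag the required cohomological continuity (and its $\infty$-categorical refinement) as ``the main obstacle'' without discharging it. The subsequent extension from pro-(finite sets) to general pro-$\pi$-finite diagrams is also only gestured at; replacing each $X_i$ by a finite simplicial set does not by itself give the needed interchange of the filtered colimit with the relevant geometric realizations inside $\operatorname{Map}_{\mathbf{Pyk}}(-,c(Y))$. So as written this is a plausible outline rather than a proof.

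The paper takes a structurally different and much shorter route that sidesteps your obstacle entirely. It invokes the solidification result \cite[Proposition~3.3.9]{pyknotic} to identify $\mathbf{Pyk}(\mathcal{S})\simeq\mathbf{Sh}^{\mathrm{hyp}}_{\mathrm{eff}}\bigl(\operatorname{Pro}(S_\pi)\bigr)$; once this is granted, the embedding is simply the Yoneda embedding of $\operatorname{Pro}(S_\pi)$ into presheaves on itself, which is automatically fully faithful. All that remains is to check that the Yoneda image lands in \emph{hypercomplete sheaves}: the effective topology is subcanonical (so representables are sheaves), $\pi$-finite spaces are truncated (hence their Yoneda images are hypercomplete), and hypercomplete objects are stable under limits (so the same holds for arbitrary pro-objects). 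Thus the hard analytic input---your continuity statement---is absorbed into the single cited equivalence, and no inductive Postnikov or cohomological argument is needed. Your approach, if the continuity were established, would have the virtue of being self-contained and not relying on the solidification machinery; but that is exactly the step you have not supplied.
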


\subsection*{Main Result}
In this paper, we will partially characterize the essential image of the embedding:
\[
\operatorname{Pro}(S_\pi) \hookrightarrow \mathbf{Pyk}(\mathcal{S}).
\]
To achieve this goal, we will apply some tools from the theory of $\infty$-topoi and explore relevant definitions and theorems concerning pyknotic spaces.
In a Mathoverflow post, Peter Scholze claimed a related result (see \cite{scholze}), although without providing a proof. In this paper we proved a partial characterization:
\begin{theorem}[Partial Characterization of the Essential Image -- Theorem 4.6]
\label{cor:essential_image}
Consider the embedding $\operatorname{Pro}(S_\pi) \hookrightarrow \mathbf{Pyk}(\mathcal{S})$. A connected object in $\mathbf{Pyk}(\mathcal{S})$ lies in the essential image of this embedding if and only if its pyknotic homotopy groups at any chosen basepoint are profinite.
\end{theorem}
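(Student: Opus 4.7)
The plan is to handle the two directions separately: the forward direction by a direct computation using that $\operatorname{Pro}(S_\pi) \hookrightarrow \mathbf{Pyk}(\mathcal{S})$ preserves cofiltered limits, and the reverse direction by induction on the truncation degree using the Postnikov tower in the $\infty$-topos $\mathbf{Pyk}(\mathcal{S})$.

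For the forward direction, let $X \in \operatorname{Pro}(S_\pi)$ be connected and truncated, written as a cofiltered limit $X = \lim_i X_i$ of $\pi$-finite spaces. This limit is preserved by the embedding, so to compute $\pi_n(X)$ in $\mathbf{Pyk}(\mathcal{S})$ I would check that pyknotic homotopy groups commute with cofiltered limits of uniformly truncated objects. Each $\pi_n(X_i)$ is a finite group viewed as a constant pyknotic object, so the resulting identification $\pi_n(X) \simeq \lim_i \pi_n(X_i)$ manifestly presents $\pi_n(X)$ as a profinite pyknotic group.

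For the reverse direction, let $X$ be a connected $k$-truncated pyknotic space with profinite pyknotic homotopy groups, and induct on $k$. The case $k = 0$ is trivial since a connected $0$-truncated object is contractible. For $k \geq 1$, the Postnikov decomposition in $\mathbf{Pyk}(\mathcal{S})$ realises $X$ as the pullback of the $k$-invariant $\tau_{\leq k-1} X \to K(\pi_k(X), k+1)$ (with $\tau_{\leq 0}X = *$ serving as the base when $k = 1$). The inductive hypothesis places $\tau_{\leq k-1}X$ in the essential image. The key subsidiary claim is that for any profinite abelian pyknotic group $A = \lim_i A_i$ one has $K(A, n) \simeq \lim_i K(A_i, n)$ in $\mathbf{Pyk}(\mathcal{S})$, which puts $K(A,n)$ into the essential image since each $K(A_i, n)$ is $\pi$-finite. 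Full faithfulness then descends the $k$-invariant to $\operatorname{Pro}(S_\pi)$, and the finite-limit preservation of the embedding places the pullback $X$ in $\operatorname{Pro}(S_\pi)$ as well.

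The main obstacle will be establishing the Eilenberg--MacLane identification $K(\lim_i A_i, n) \simeq \lim_i K(A_i, n)$ in $\mathbf{Pyk}(\mathcal{S})$ together with the ancillary commutation of pyknotic $\pi_n$ with the cofiltered limits that appear on both sides of the argument. In an ordinary topos such interchanges would require a Mittag--Leffler hypothesis, but the pro-systems involved are levelwise truncated with surjective transition maps, which should make the relevant $\lim^1$-type obstructions vanish once one passes to hypercomplete sheaves on profinite sets. A secondary point is to verify that the embedding $\operatorname{Pro}(S_\pi) \hookrightarrow \mathbf{Pyk}(\mathcal{S})$ preserves the finite limits needed to descend the Postnikov pullback; this should follow from the general continuity of pro-completions into $\infty$-categories with finite limits.
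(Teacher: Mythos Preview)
Your reverse direction has a genuine gap. The Postnikov decomposition you invoke asserts that $\tau_{\leq k}X$ is the pullback of a $k$-invariant $\tau_{\leq k-1}X \to K(\pi_k(X,\zeta), k+1)$ landing in an \emph{absolute} Eilenberg--MacLane object of $\mathbf{Pyk}(\mathcal{S})$. This is only valid when $X$ is $1$-connected. For a merely connected $X$, the homotopy sheaf $\pi_k(X)$ lives in $\mathrm{Disc}(\mathbf{Pyk}(\mathcal{S})/\tau^{\leq 1}X)$ and carries a generally nontrivial action of $\pi_1(X,\zeta)$; the fibration $\tau_{\leq k}X \to \tau_{\leq k-1}X$ is classified by a map to the \emph{relative} Eilenberg--MacLane object $K^{/\tau^{\leq 1}X}(\pi_k(X), k+1)$ in the slice topos, not to $K(\pi_k(X,\zeta),k+1)$. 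The paper's Lemma~\ref{pulldia} makes the $1$-connectedness hypothesis explicit, and the induction in Theorem~\ref{oneside} is carried out in the slice $\mathbf{Pyk}(\mathcal{S})/\tau^{\leq 1}X$ precisely for this reason.

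Consequently the subsidiary claim you isolate, that $K(A,n)\simeq\varprojlim_i K(A_i,n)$ for profinite abelian $A$, is correct but not enough. What the induction actually needs is that the twisted object $K^{/\Gamma}(G,n)$ lies in $\operatorname{Pro}(S_\pi)$ whenever the $1$-gerbe $\Gamma=\tau^{\leq 1}X$ and the fibre group $G_0=G\times_\Gamma\zeta$ do. This is the paper's Lemma~\ref{EM-Pr}, and its proof is the real work: one exhibits $K^{/\Gamma}(G,n)$ as the homotopy quotient $K(G_0,n)\mathbin{/\mkern-6mu/} H$ for $H=\pi_1(X,\zeta)$, writes $G_0$ as an inverse limit of finite continuous $H$-modules (via the structure theory of profinite modules), passes the quotient through the limit using Lemma~\ref{qprelim}, and uses that on each finite stage the $H$-action factors through a finite quotient so that the resulting pieces are $\pi$-finite. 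None of this is visible in your outline, and without it the induction step fails whenever $\pi_1$ acts nontrivially on some higher $\pi_k$ (already for $X=B\Sigma_3$ extended by a nontrivial module, say).

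Your forward direction is in line with the paper's: the interchange $\pi_j(\varprojlim X_i)\simeq\varprojlim\pi_j(X_i)$ is exactly Corollary~\ref{limhom}. The paper proves it not by a Mittag--Leffler argument but by observing that both sides are coherent objects of $\mathbf{Pyk}(\mathbf{Set})$ and that the underlying-set functor is conservative on coherent objects (Corollary~\ref{consev}), reducing to the classical statement in spaces.
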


To achieve this characterization, two key points need to be addressed. First, we demonstrate that the inverse limit commutes with homotopy groups for pro-systems of $\pi$-finite spaces. This is formalized in the following corollary:

\begin{corollary}[Corollary 3.12]
\label{cor:homotopy_limits}
Let \(\{X_i\}\) be a pro-system of \(\pi\)-finite spaces. Denote by \(p_i: \varprojlim X_i \rightarrow X_i\) the projection maps. Then, we have the following isomorphism:
\[
\pi_j(\varprojlim X_i, x_0) = \varprojlim \pi_j(X_i, p_i(x_0))
\]
\end{corollary}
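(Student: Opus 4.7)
The plan is to deduce the identification from a Milnor-style exact sequence for homotopy groups of cofiltered limits in the hypercomplete $\infty$-topos $\mathbf{Pyk}(\mathcal{S})$, combined with the Mittag--Leffler vanishing of higher derived inverse limits on pro-systems of finite groups.

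First, I would invoke the general fact that for any cofiltered diagram $\{X_i\}$ of pointed connected objects in a hypercomplete $\infty$-topos one has a short exact sequence of pyknotic homotopy groups
\[
0 \longrightarrow \lim{}^1 \pi_{j+1}(X_i) \longrightarrow \pi_j\!\left(\varprojlim X_i\right) \longrightarrow \varprojlim \pi_j(X_i) \longrightarrow 0.
\]
This is obtained by presenting $\varprojlim X_i$ as the equalizer (or more generally the totalization) of the cosimplicial diagram built from the levelwise products $\prod_i X_i$, and extracting the long exact sequence in pyknotic homotopy from the resulting fiber sequence. Since this is a statement inside $\mathbf{Pyk}(\mathcal{S})$, all $\pi_j$'s and $\lim^1$'s here are interpreted as pyknotic objects, which is exactly what the corollary needs.

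Second, I would verify that $\lim^1 \pi_{j+1}(X_i)$ vanishes for the system at hand. Since each $X_i$ is $\pi$-finite, $\pi_{j+1}(X_i)$ is a finite group (or pointed set). Any cofiltered diagram of finite groups automatically satisfies the Mittag--Leffler condition, as the images of the transition maps in any fixed $\pi_{j+1}(X_i)$ form a descending chain of finite subgroups that must stabilize. Applied in the pyknotic setting, this gives the vanishing of $\lim^1$ and the short exact sequence collapses to the claimed isomorphism $\pi_j(\varprojlim X_i, x_0) \cong \varprojlim \pi_j(X_i, p_i(x_0))$.

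The main obstacle will be rigorously establishing the Milnor exact sequence inside $\mathbf{Pyk}(\mathcal{S})$: one must check that the fiber-sequence/equalizer presentation of the limit behaves correctly under hypercompletion, and that the pyknotic $\pi_j$ and $\lim^1$ constructions are compatible with this presentation. A related subtlety is passing from sequential to genuinely cofiltered diagrams; this should be handled either by a cofinality reduction to towers in $\operatorname{Pro}(S_\pi)$, or by a direct Roos-type vanishing argument for cofiltered limits of finite pyknotic groups, where the resulting inverse limit is automatically a profinite (and hence compact) pyknotic group.
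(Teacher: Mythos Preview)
Your approach is genuinely different from the paper's, and the gap you yourself flag is real enough that the argument as written does not go through.

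The paper does not attempt to build a Milnor or Bousfield--Kan exact sequence inside $\mathbf{Pyk}(\mathcal{S})$. Instead it argues as follows: both $\pi_j(\varprojlim X_i^{\mathrm{disc}}, x_0)$ and $\varprojlim \pi_j(X_i^{\mathrm{disc}}, p_i(x_0))$ are shown to be \emph{coherent} objects of $\mathbf{Pyk}(\mathbf{Set})$---the former because pro-$\pi$-finite spaces are coherent in $\mathbf{Pyk}(\mathcal{S})$ and homotopy groups of coherent objects are coherent, the latter because coherent objects of $\mathbf{Pyk}(\mathbf{Set})$ (i.e.\ compact Hausdorff spaces) are closed under cofiltered limits. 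Since the global sections functor $(*)$ is conservative on coherent objects (a continuous bijection of compact Hausdorff spaces is a homeomorphism), it suffices to check the comparison map is an isomorphism on underlying sets, which is the classical statement handled by the preceding proposition via \cite[Remark 3.2.8]{DAG-XIII}.

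Your route would require you to construct the short exact sequence inside $\mathbf{Pyk}(\mathcal{S})$, and this is where things get delicate. The fiber-sequence presentation of a tower limit gives a long exact sequence only once you know that $\pi_j$ takes infinite products in $\mathbf{Pyk}(\mathcal{S})$ to products in $\mathbf{Pyk}(\mathbf{Set})$; this is not automatic in an arbitrary $\infty$-topos and needs an argument specific to $\mathbf{Pyk}(\mathcal{S})$. You also assume the $X_i$ are connected, which $\pi$-finite spaces need not be, so the basepoint and $\pi_0$ issues must be handled separately. Finally, the cofinality reduction from general cofiltered diagrams to towers is not available in $\operatorname{Pro}(S_\pi)$ without further hypotheses (the indexing category need not admit a cofinal $\omega^{\mathrm{op}}$), so you would really need a Roos-type vanishing for all higher $\lim^s$ of cofiltered systems of finite pyknotic groups. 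All of this is plausible, but none of it is free; the paper's coherence-and-conservativity argument bypasses these difficulties entirely by pushing the computation down to $\mathcal{S}$.
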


This result implies that the homotopy groups of objects in $\operatorname{Pro}(S_\pi)$ are profinite, providing the first part of the characterization in Corollary~\ref{cor:essential_image}.

Second, to establish the converse, we proceed by induction on the Postnikov tower of a space with profinite homotopy groups. For this inductive process, it was crucial to verify that certain relative profinite Eilenberg--MacLane spaces are indeed in $\operatorname{Pro}(S_\pi)$, ensuring that the building blocks of our induction are profinite. This is established in the following lemma:

For a category $\mathcal{C}$,  $\mathcal{C}/X$ denotes the over category over $X$, and for an $\infty$-topos $\mathfrak{X}$ and a group object $G$ in $\text{Disc}(\mathfrak{X}/X)$,  $K^{/X}(G, n)\in \mathfrak{X}/X$  denotes the Eilenberg MacLane object banded by $G$.
\begin{lemma}[Lemma 4.9]
\label{lem:EM_Profinite}
Let $\Gamma$ be a $1$-gerb in $\textbf{Pyk}(\mathcal{S})$, let $\zeta: 1 \rightarrow \Gamma$ be a point of $\Gamma$, and let $ K^{/\Gamma}(G, n) $ be an Eilenberg--MacLane object in $\mathbf{Pyk}(\mathcal{S})/\Gamma$. Then, $K^{/\Gamma}(G, n) \in \operatorname{Pro}(S_\pi)$ (viewing $K^{/\Gamma}(G, n)$ as an object of $\mathbf{Pyk}(\mathcal{S})$ via the forgetful functor) if and only if both $\Gamma$ and \( G_0 := G \times_\Gamma \zeta \) lie in $\operatorname{Pro}(S_\pi)$.
\end{lemma}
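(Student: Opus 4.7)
The plan is to split the biconditional into the easy and hard directions. The key reduction for the hard direction will be to trivialize the gerbe using the point $\zeta$, identify $K^{/\Gamma}(G,n)$ with a Borel construction, and then rewrite that construction as an inverse limit of $\pi$-finite spaces.

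For the direction $(\Rightarrow)$, I would use the canonical zero-section $e:\Gamma\to K^{/\Gamma}(G,n)$ of the structure map, which makes $\Gamma$ a retract of $K^{/\Gamma}(G,n)$ inside $\mathbf{Pyk}(\mathcal{S})$. Since the fully faithful embedding $\operatorname{Pro}(S_\pi)\hookrightarrow\mathbf{Pyk}(\mathcal{S})$ lands in an idempotent-complete subcategory (any $\infty$-category with finite limits splits idempotents), retracts of pro-$\pi$-finite objects remain pro-$\pi$-finite, so $\Gamma\in\operatorname{Pro}(S_\pi)$. The pullback $K(G_0,n)\simeq K^{/\Gamma}(G,n)\times_\Gamma\zeta$ then lies in $\operatorname{Pro}(S_\pi)$ by closure under finite limits, and Corollary~\ref{cor:homotopy_limits} identifies $G_0\simeq\pi_n(K(G_0,n))$ as a profinite group.

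For the direction $(\Leftarrow)$, I would first use that $\Gamma$ is a pointed connected $1$-truncated object of the $\infty$-topos $\mathbf{Pyk}(\mathcal{S})$ to identify $\Gamma\simeq BH$ where $H:=\Omega_\zeta\Gamma$ is a discrete group object; by Corollary~\ref{cor:homotopy_limits} applied to the assumption $\Gamma\in\operatorname{Pro}(S_\pi)$, the group $H=\pi_1(\Gamma,\zeta)$ is profinite. Under this identification the discrete group object $G$ over $\Gamma$ corresponds to a continuous action of $H$ on $G_0$, and the Eilenberg--MacLane object becomes the Borel construction $K^{/\Gamma}(G,n)\simeq K(G_0,n)_{hH}$. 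Choosing compatible presentations $H\simeq\varprojlim_j H_j$ and $G_0\simeq\varprojlim_i G_{0,i}$ by finite groups so that the $H$-action on each $G_{0,i}$ factors through some $H_j$ (possible because a continuous action of a profinite group on a finite group factors through a finite quotient), I would identify
\[
K^{/\Gamma}(G,n)\;\simeq\;\varprojlim_{(i,j)}\, K(G_{0,i},n)_{hH_j},
\]
and observe that each term on the right is $\pi$-finite (with $\pi_1=H_j$ and $\pi_n=G_{0,i}$ for $n\geq 2$, and an analogous finite extension for $n=1$), exhibiting $K^{/\Gamma}(G,n)$ as a pro-object in $S_\pi$.

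The hardest step will be justifying the displayed limit interchange. The Borel construction $(-)_{hH_j}$ is a colimit over $BH_j$ and does not commute with cofiltered limits unconditionally; one has to combine continuity of the $H$-action on $G_0$ with cofinality of the chosen pro-system $(H_j,G_{0,i})$ to descend the whole banded Eilenberg--MacLane object from the profinite level to compatible finite quotients. Equivalently, the technical heart is upgrading the continuous profinite $H$-module structure on $G_0$ to a genuine pro-system of finite $H_j$-modules so that the Postnikov data of $K^{/\Gamma}(G,n)$ is presented termwise by $\pi$-finite spaces; once this compatibility is established, $\pi$-finiteness of each term and the identification of the limit are routine.
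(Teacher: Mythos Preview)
Your strategy for the hard direction $(\Leftarrow)$ is the same as the paper's: trivialize $\Gamma$ via $\zeta$ to $BH$ with $H$ profinite, identify $K^{/\Gamma}(G,n)$ with the homotopy quotient $K(G_0,n)\mathbin{/\mkern-6mu/}H$, and then exhibit this as an inverse limit of $\pi$-finite pieces using that $G_0$ is a cofiltered limit of finite $H$-modules and that the action on each finite piece factors through a finite quotient of $H$.

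Where you and the paper diverge is precisely at the step you flag as hardest. You propose to descend simultaneously to a doubly-indexed system $K(G_{0,i},n)_{hH_j}$ and then argue cofinality; the paper avoids this by a cleaner observation. Under the equivalence $H\mathrm{Action}(\mathfrak{X})\simeq \mathfrak{X}/BH$ of Theorem~\ref{G-action}, the functor $(-)\mathbin{/\mkern-6mu/}H$ becomes the \emph{forgetful} functor $\mathfrak{X}/BH\to\mathfrak{X}$, which preserves weakly contractible (hence cofiltered) limits unconditionally (Lemma~\ref{climit}, packaged as Lemma~\ref{qprelim}). So the interchange
\[
\bigl(\varprojlim_N K(G_0/N,n)\bigr)\mathbin{/\mkern-6mu/}H \;\simeq\; \varprojlim_N\bigl(K(G_0/N,n)\mathbin{/\mkern-6mu/}H\bigr)
\]
is automatic, with no continuity or cofinality bookkeeping. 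One then handles each $K(G_0/N,n)\mathbin{/\mkern-6mu/}H$ by the quotient-in-stages formula $((-)\mathbin{/\mkern-6mu/}J)\mathbin{/\mkern-6mu/}(H/J)$, where $J=\ker(H\to H/J)$ acts trivially on $G_0/N$; the inner quotient is $BJ\times K(G_0/N,n)\in\operatorname{Pro}(S_\pi)$, and a second application of Lemma~\ref{qprelim} together with closure of $S_\pi$ under quotients by finite groups (Corollary of Lemma~\ref{pifiniteq}) finishes. Your proposed route would also work, but the slice-category viewpoint is what makes the limit interchange painless.

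For the direction $(\Rightarrow)$, which the paper does not spell out, your retract argument is correct: $\operatorname{Pro}(S_\pi)$ has finite limits, hence is idempotent complete, and the embedding into $\mathbf{Pyk}(\mathcal{S})$ is fully faithful, so any retract in $\mathbf{Pyk}(\mathcal{S})$ of a pro-$\pi$-finite object is already pro-$\pi$-finite. Alternatively one can read off $\pi_1$ and $\pi_n$ of the total space via Theorem~\ref{otherside} directly and then reconstruct $\Gamma\simeq B\pi_1$ as in the base case of Theorem~\ref{oneside}.
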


Knowing that these Eilenberg--MacLane objects are profinite, we need to understand the gluing process in our inductive construction. For this purpose, we introduce a well-known crucial lemma(essentially it is {{\cite[Lemma 7.2.2.26]{HTT}}}) that describes the pasting process in an $\infty$-topos:

\begin{lemma}[Lemma 2.15]
\label{lem:pullback_diagram}
Let $X \in \mathfrak{X}$ be a $1$-connected object in an $\infty$-topos $\mathfrak{X}$. Then, there exists a pullback diagram:
\[
\begin{tikzcd}
    \tau^{\leq n}X \arrow[r] \arrow[d] & \tau^{\leq n-1}X \arrow[d] \\
    * \arrow[r] & K\left( \pi_n(X), n+1 \right)
\end{tikzcd}
\]
\end{lemma}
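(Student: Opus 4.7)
The strategy is to realize the truncation map $p: \tau^{\leq n}X \to \tau^{\leq n-1}X$ as a principal fibration classified by a $k$-invariant, following the Postnikov tower construction in an $\infty$-topos (HTT \S 7.2.2). The $1$-connectedness hypothesis on $X$ will ensure that the classifying object is an \emph{untwisted} Eilenberg--MacLane object $K(\pi_n(X), n+1)$ rather than a twisted one, which is exactly what the diagram demands.

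First, I would identify the fiber of $p$. Form the fiber sequence $F \to \tau^{\leq n}X \xrightarrow{p} \tau^{\leq n-1}X$ in $\mathfrak{X}$; the long exact sequence of homotopy sheaves, combined with the defining universal property of $k$-truncation (which forces $\tau^{\leq n}X$ and $\tau^{\leq n-1}X$ to agree in degrees $<n$ and $\tau^{\leq n-1}X$ to have vanishing $\pi_n$), yields $\pi_j F = 0$ for $j \neq n$ and $\pi_n F \simeq \pi_n X$. Hence $F \simeq K(\pi_n X, n)$. Since $X$ is $1$-connected, so is $\tau^{\leq n-1}X$, and therefore the local system on $\tau^{\leq n-1}X$ determined by $p$ has no monodromy and is canonically isomorphic to the constant discrete abelian group $\pi_n(X)$; in the language of HTT, $p$ is a banded $n$-gerb with constant band $\pi_n(X)$.

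Next, I would invoke the classification of such principal $\infty$-bundles in an $\infty$-topos: for an abelian group $A$ in the underlying topos, banded $n$-gerbs on $Y$ are classified by maps $Y \to K(A, n+1)$, and the universal such gerb is the homotopy fiber of the basepoint $* \to K(A, n+1)$. Applying this with $A = \pi_n(X)$ and $Y = \tau^{\leq n-1}X$ produces a classifying map $\kappa: \tau^{\leq n-1}X \to K(\pi_n X, n+1)$ together with an equivalence $\tau^{\leq n}X \simeq * \times_{K(\pi_n X, n+1)} \tau^{\leq n-1}X$, which is the desired pullback square.

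The main obstacle is making steps two and three precise in the generality of an arbitrary $\infty$-topos, since the classical topological arguments rely on pointed-space techniques; the $1$-connectivity hypothesis is precisely what is needed to globally trivialize the band so that the classifier is the untwisted $K(\pi_n X, n+1)$. I would bypass the technical bookkeeping by citing \cite[Lemma 7.2.2.26]{HTT}, which already packages the Postnikov step for an $\infty$-topos into exactly this pullback square after restricting to $1$-connected objects.
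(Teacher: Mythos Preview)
Your proposal is correct and follows essentially the same route as the paper: identify the truncation map $\tau^{\leq n}X \to \tau^{\leq n-1}X$ as an $n$-gerbe banded by $\pi_n(X)$ (the paper packages your long-exact-sequence computation into its preceding lemma), use $1$-connectedness so that the band lives in $\mathrm{Disc}(\mathfrak{X})$ rather than over $\tau^{\leq 1}X$, and then invoke \cite[Lemma 7.2.2.26]{HTT} to conclude that the unique morphism in $\mathrm{Gerb}_n^{\pi_n(X)}(\mathfrak{X})$ to the final object $*\to K(\pi_n(X),n+1)$ is the desired pullback square.
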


This lemma allows us to reconstruct the Postnikov tower of $X$ by successively attaching Eilenberg--MacLane spaces corresponding to its homotopy groups. By ensuring that each stage of the tower remains within $\operatorname{Pro}(S_\pi)$, we complete the induction and thereby establish the characterization in Corollary~\ref{cor:essential_image}.
\subsection*{Acknowledgments}
First and foremost, I would like to express my heartfelt gratitude to Shachar Carmeli and Dmitry Gourevitch for their invaluable guidance and support as my MSc supervisors.  I am also deeply thankful to Tomer Schlank for his guidance when I was supervised by him at the Hebrew University and for suggesting the initial research idea during my time at the Hebrew University.

Additionally, I wish to extend my appreciation to Lior Yanovski, Yaakov Varshavsky, and the members of Tomer Schlank's research group for their insightful discussions and contributions.

Finally, I also wish to acknowledge my usage of OpenAI's ChatGPT, which aided me with the phrasing and \LaTeX{} formatting of this text.

\section{\texorpdfstring{\ensuremath{\infty}-topoi}{Infinity-topoi}}

In this section, we will review the \(\infty\)-topos machinery relevant to our work. We assume the reader is familiar with the basic definitions of \(\infty\)-topoi, as presented in {{\cite[Chapter 6]{HTT}}}.
All the definitions and propositions in this section are well-established, and we do not claim any originality. We included them partly for completeness and partly because we could not find appropriate references.
\subsection{ Homotopy Groups}
  \ 
  \\ \  
In this section, we will explore the notion of homotopy groups in the context of an $(\infty,1)$-topos. In classical homotopy theory, homotopy groups at a base point are defined as the homotopy classes of pointed maps from the $n$-sphere $S^n$ to a given space.
\\

This concept naturally extends to the more general setting of an $(\infty,1)$-topos. One key fact is that every $(\infty,1)$-topos $\mathfrak{X}$ is "powered over spaces." This means that for any object $X \in \mathfrak{X}$ and any space $K \in \mathcal{S}$, we can construct the power object $X^K$. The power object $X^K$ is defined as the limit of the constant diagram with value $X$ indexed by $K$, that is, $X^K = \varprojlim_K X$. In particular, for the $n$-sphere $S^n$, we can form the power object $X^{S^n}$.
\\

We will also highlight several key differences between the classical and topos-theoretic settings. In an $(\infty,1)$-topos, the homotopy groups $\pi_n(X)$ are not defined globally as group objects, but rather as sheaves over the object $X$. Furthermore, we will introduce \emph{Eilenberg--MacLane objects}, which, as in the classical theory, form the building blocks of the Postnikov tower of an object. Finally, we will discuss the general machinery for reconstructing objects via iterative pullbacks of Eilenberg--MacLane objects(associated with their homotopy groups) along with Postnikov invariants.

\subsubsection{\texorpdfstring{Powering over \ensuremath{\infty}-groupoids}{Powering over Infinity-groupoids}}

\begin{definition}[Powering over \(\infty\)-groupoids]
Let \(\mathfrak{X}\) be an \(\infty\)-topos. Then \(\mathfrak{X}\) is \emph{powered} over \(\mathcal{S}\), the \(\infty\)-category of spaces, in the following sense: for all \(K \in \mathcal{S}\) and \(X \in \mathfrak{X}\), we can define \(X^K\) as the limit over \(K\) of the constant diagram taking the value \(X\). That is,
\[
X^K := \varprojlim_K X.
\]
\end{definition}

With the concept of powering established, we turn our attention to the final geometric morphism, which serves as a crucial link between an \(\infty\)-topos and the \(\infty\)-category of spaces.

\begin{definition}[Final Geometric Morphism]
Let \(\mathfrak{X}\) be an \(\infty\)-topos. The \emph{final geometric morphism}, commonly known as the \emph{global section functor}, is the unique geometric morphism
\[
\Gamma: \mathfrak{X} \rightarrow \mathcal{S}.
\]
\end{definition}

With the final geometric morphism defined, we can define constant stacks, as those objects that belong to the essential image of the pullback functor.

\begin{definition}[Locally Constant Stacks]
Consider the final geometric morphism from an \((\infty,1)\)-topos \(\mathfrak{X}\) to the \(\infty\)-category of spaces \(\mathcal{S}\):
\[
\Gamma: \mathfrak{X} \to \mathcal{S}.
\]
And consider the pullback functor of this morphism denoted by
\[
\Gamma^*: \mathcal{S} \to \mathfrak{X}.
\]
An object in the essential image of the functor \(\Gamma^*\) is called a \emph{constant stack}.
\end{definition}

We can rephrase the powering operations using constant objects as follows
(We will denote by $\text{Map}$ the internal mapping object in $\mathfrak{X}$.)
\begin{proposition}
\label{link}
For any $X \in \mathfrak{X}$ and any space $K \in \mathcal{S}$, we have the equivalence:
\[
X^K \simeq \text{Map}(\Gamma^*(K), X),
\]
\end{proposition}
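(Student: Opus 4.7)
The plan is to verify the equivalence via the Yoneda lemma, by showing that both sides represent the same functor $\mathfrak{X}^{\mathrm{op}} \to \mathcal{S}$. For an arbitrary test object $Y \in \mathfrak{X}$, I want to produce natural equivalences from $\mathrm{Map}_{\mathfrak{X}}(Y, X^K)$ and from $\mathrm{Map}_{\mathfrak{X}}(Y, \mathrm{Map}(\Gamma^*(K), X))$ to a common third term, and then invoke Yoneda.

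First I would rewrite the left hand side directly from the definition $X^K = \varprojlim_K X$: since mapping out commutes with limits in the target, we get
\[
\mathrm{Map}_{\mathfrak{X}}(Y, X^K) \;\simeq\; \varprojlim_{K} \mathrm{Map}_{\mathfrak{X}}(Y, X).
\]
For the right hand side I would use the defining adjunction of the internal hom to obtain
\[
\mathrm{Map}_{\mathfrak{X}}(Y, \mathrm{Map}(\Gamma^*(K), X)) \;\simeq\; \mathrm{Map}_{\mathfrak{X}}(Y \times \Gamma^*(K), X).
\]

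The bridge between the two is to express $\Gamma^*(K)$ as a colimit. Writing $K \simeq \varinjlim_{K} *$ in $\mathcal{S}$ and using that $\Gamma^*$, as the left adjoint of a geometric morphism, preserves colimits, I get $\Gamma^*(K) \simeq \varinjlim_{K} 1_{\mathfrak{X}}$. Next, since colimits in an $\infty$-topos are universal, products distribute over colimits, giving
\[
Y \times \Gamma^*(K) \;\simeq\; Y \times \varinjlim_{K} 1_{\mathfrak{X}} \;\simeq\; \varinjlim_{K} Y.
\]
Mapping this colimit into $X$ converts it into the desired limit:
\[
\mathrm{Map}_{\mathfrak{X}}(\varinjlim_{K} Y, X) \;\simeq\; \varprojlim_{K} \mathrm{Map}_{\mathfrak{X}}(Y, X).
\]
Chaining these identifications shows both functors agree naturally in $Y$, and Yoneda gives the claimed equivalence.

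The main conceptual point, and the only place where a nontrivial topos-theoretic input is used, is the universality of colimits step $Y \times \varinjlim_K 1 \simeq \varinjlim_K Y$; this is where being an $\infty$-topos (and not merely a presentable $\infty$-category) enters. Everything else is a formal juggling of adjunctions and the definition of the power object, so I do not expect any real obstacle beyond bookkeeping of naturality.
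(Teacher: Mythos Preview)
Your argument is correct and, like the paper, concludes via Yoneda, but the internal route differs. The paper applies the internal-hom adjunction twice to rewrite $\mathrm{Hom}(Z, \mathrm{Map}(\Gamma^*(K), X))$ as $\mathrm{Hom}(\Gamma^*(K), \mathrm{Map}(Z, X))$, then invokes the adjunction $\Gamma^* \dashv \Gamma$ to pass to $\mathcal{S}$, obtaining $\mathrm{Hom}_{\mathcal{S}}(K, \mathrm{Hom}_{\mathfrak{X}}(Z, X)) \simeq \varprojlim_K \mathrm{Hom}(Z, X)$. You instead stay entirely inside $\mathfrak{X}$: you write $\Gamma^*(K) \simeq \varinjlim_K 1_{\mathfrak{X}}$ and use that $Y \times (-)$ commutes with this colimit. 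Both chains are purely formal; yours avoids ever naming $\Gamma$, while the paper's avoids decomposing $\Gamma^*(K)$ as a colimit. One small remark on your commentary: the step $Y \times \varinjlim_K 1 \simeq \varinjlim_K Y$ does not actually require universality of colimits in the topos-theoretic sense---it already follows from $Y \times (-)$ being left adjoint to $\mathrm{Map}(Y, -)$, an adjunction you have just used. So your proof in fact works in any cartesian closed presentable $\infty$-category equipped with a left exact left adjoint from $\mathcal{S}$, not only in an $\infty$-topos.
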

\begin{proof}
First, consider the hom-space $\text{Hom}(Z, \text{Map}(\Gamma^*(K), X))$. By the adjunction $$(-) \times Y \dashv \text{Map}(Y, -),$$ we have the equivalence:
\[
\text{Hom}(Z, \text{Map}(\Gamma^*(K), X)) \simeq \text{Hom}(Z \times \Gamma^*(K), X).
\]
Next, switching $Z$ and $\Gamma^*(K)$, and using the same adjunction again, we obtain:
\[
\text{Hom}(Z \times \Gamma^*(K), X) \simeq \text{Hom}(\Gamma^*(K), \text{Map}(Z, X)).
\]
Now, applying the adjunction $\Gamma^* \dashv \Gamma$ :
\[
\text{Hom}(\Gamma^*(K), \text{Map}(Z, X)) \simeq \text{Hom}(K, \Gamma \text{Map}(Z, X)) \simeq \text{Hom}(K, \text{Hom}(*, \text{Map}(Z, X))).
\]
By using the adjunction $(-) \times Y \dashv \text{Map}(Y, -)$ once more, we deduce that:
\[
\text{Hom}(*, \text{Map}(Z, X)) \simeq \text{Hom}(* \times Z, X) \simeq \text{Hom}(Z, X).
\]
Thus:
\[
\text{Hom}(K, \text{Hom}(*, \text{Map}(Z, X))) \simeq \text{Hom}(K, \text{Hom}(Z, X)).
\]
Now, observe that $\text{Hom}(K, \text{Hom}(Z, X))$ is equivalently $\varprojlim_K \text{Hom}(Z, X)$, and since the hom-functor commutes with limits, we conclude that:
\[
\varprojlim_K \text{Hom}(Z, X) \simeq \text{Hom}(Z, \varprojlim_K X) = \text{Hom}(Z, X^K).
\]
Since all the isomorphisms above are natural, by the Yoneda lemma, we obtain the desired equivalence:
\[
X^K \simeq \text{Map}(\Gamma^*(K), X).
\]
\end{proof}
\subsubsection{Homotopy Groups}
With the notion of powering an object over a space, we turn to define homotopy groups of objects in a general infinity topos.
\begin{definition}

Let $\mathfrak{X}$ be an $(\infty, 1)$-topos, and let $X \in \mathfrak{X}$ be an object. We define the \emph{$n$-th homotopy group} (or \emph{homotopy sheaf}) of $X$, denoted by $\pi_n(X)$, as an object in the category $\text{Disc}(\mathfrak{X}/X)$ of discrete objects over $X$ by:
\[
    \pi_n(X) = \tau^{\leq 0}\left(X^{S^n} \to X\right)
\]
where $X^{S^n}$ is the power object, $\tau^{\leq 0}$ refers to the \emph{truncation functor}, and the map $X^{S^n} \to X$ is an evaluation at a given base point.
\end{definition}

In classical homotopy theory, homotopy groups $\pi_n(X, x_0)$ are defined at a given base point $x_0$ and form global group objects. However, in the context of an $(\infty,1)$-topos, the homotopy groups $\pi_n(X)$ are not global objects but are instead defined as sheaves over the object $X$. While the classical setting ties homotopy groups to a specific base point, the topos-theoretic approach generalizes this concept, allowing homotopy groups to be defined more flexibly.

We will now introduce the notion of homotopy groups at a given point in a general topos.

\begin{definition}
Let $X$ be an object in an $(\infty, 1)$-topos $\mathfrak{X}$. Given a map $* \xrightarrow{\xi} X$ (i.e., a point of $X$), we define the homotopy groups $\pi_k(X, \xi)$ of $X$ at the point $\xi$ as:
\[
\pi_k(X, \xi) = \xi^* \pi_k(X) \in \mathfrak{X}_{/*} \simeq \mathfrak{X}.
\]
Here, $\xi^*$ is the pullback along the geometric morphism $\mathfrak{X}/*\rightarrow \mathfrak{X}/X$.
\end{definition}
The homotopy groups are objects in $\text{Disc}(\mathfrak{X}/X)$. However, the lemma below demonstrates that these homotopy groups can actually be defined as sheaves over $\tau^{\leq 1}X$. Consequently, the homotopy groups of a $1$-connected object are defined in $\text{Disc}(\mathfrak{X})$.
\begin{lemma}
\label{disobj}
Let $\mathfrak{X}$ be an $(\infty,1)$-topos, and let $X \in \mathfrak{X}$. Then the truncations maps induce functors:
\[
\text{Disc}(\mathfrak{X}/X) \to \text{Disc}(\mathfrak{X}/\tau^{\leq n}X) \to \text{Disc}(\mathfrak{X}/\tau^{\leq 1}X)
\]
which are equivalences for all $n \geq 1$.
\end{lemma}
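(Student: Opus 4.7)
The plan is to reduce the lemma to the following general assertion about $\infty$-topoi: if $f: Y \to Z$ is a morphism with $\tau^{\leq 1}(f)$ an equivalence (equivalently, $f$ is $2$-connective, with $1$-connected fibers), then pullback $f^*: \mathfrak{X}/Z \to \mathfrak{X}/Y$ restricts to an equivalence on discrete objects. Granted this, the lemma follows immediately: for $n \geq 1$, both the truncation $X \to \tau^{\leq n}X$ and the map $\tau^{\leq n}X \to \tau^{\leq 1}X$ become identities after applying $\tau^{\leq 1}$, so pullback along each is an equivalence on discrete slices. The functors displayed in the lemma are then the mutually inverse equivalences, which we may identify with the right adjoints $f_*$; these preserve $0$-truncation because right adjoints preserve limits and being $0$-truncated is a limit condition.

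For the auxiliary assertion, the guiding intuition is the classical fact that a sheaf of sets on a space depends only on its fundamental groupoid. The cleanest formalization is a slice-topos version of {\cite[Proposition 5.5.6.28]{HTT}}: pullback along a morphism inducing an equivalence on $(n+1)$-truncations is an equivalence on $n$-truncated slice objects. Specializing to $n = 0$ yields exactly what is needed. An alternative route uses the internal classifier of $0$-truncated morphisms in $\mathfrak{X}$, which is itself $1$-truncated; it identifies $\text{Disc}(\mathfrak{X}/Y)$ with a mapping space out of $Y$ into a $1$-truncated object, and any such mapping space factors through $\tau^{\leq 1}(Y)$.

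The main obstacle I anticipate is essential surjectivity. Fully faithfulness is routine: the $2$-connectivity of $f$ makes $f^*$ fully faithful on $1$-truncated slice objects, hence on the smaller discrete subcategory. Essential surjectivity, by contrast, is a genuine descent statement: given a discrete $A \to Y$, one forms $f_* A \to Z$ (automatically discrete, as the image under a right adjoint), and one must verify that the counit $f^* f_* A \to A$ is an equivalence. This collapses because descent data for a discrete object along a $2$-connective effective epimorphism is controlled by the fundamental groupoid of the \v{C}ech nerve of $f$, which becomes trivial precisely when the fibers of $f$ are $1$-connected.
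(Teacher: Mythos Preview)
Your proposal is correct and follows the same idea as the paper, whose proof is a one-line citation of \cite[Lemma~7.2.1.13]{HTT}---precisely the statement that pullback along the truncation map $X \to \tau^{\leq n}X$ induces an equivalence on $n$-truncated slice objects. Your additional discussion (the classifier argument, the descent-style verification of essential surjectivity) is sound but unnecessary once that lemma is invoked; note that 7.2.1.13 is the sharper reference for slice $\infty$-topoi than the 5.5.6.28 you cite.
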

\begin{proof}
This is a direct consequence of {\cite[Lemma 7.2.1.13]{HTT}} 
\end{proof}
Before moving forward, it is essential to introduce a concept that will enable us to generalize Eilenberg–MacLane objects from the category of spaces to the framework of a general topos. This concept is known as an \emph{$n$-gerbe}, defined as follows:

\begin{definition}
An \emph{$n$-gerbe} in an $(\infty,1)$-topos is an object that is both $n$-truncated and $(n-1)$-connected.
\end{definition}

From {\cite[Lemma 7.2.2.8]{HTT}}, we have the equivalence $\mathfrak{X}_{*} \simeq \mathfrak{X}^{*/}$, which identifies the category of pointed objects in $\mathfrak{X}$ denoted $\mathfrak{X}_*$ with the category of objects under the final object $* \in \mathfrak{X}$ denoted $\mathfrak{X}^{*/}$. Therefore, we will abuse notation and treat pointed objects in $\mathfrak{X}$ as objects in $\mathfrak{X}$ under $*$.

\begin{proposition}[{\cite[7.2.2.12.]{HTT}}] \label{EM}
Let $\mathfrak{X}$ be an $\infty$-topos, and let $n \geq 0$. Consider the functor $\pi_n: \mathfrak{X}_{*} \rightarrow \text{Disc}(\mathfrak{X})$.

Then:

\begin{enumerate}
    \item \textbf{For $n = 0$:} The functor $\pi_0$ induces an equivalence between the category of $0$-gerbes in $\mathfrak{X}_{*}$ and the category of pointed discrete objects in $\mathfrak{X}$.
    
    \item \textbf{For $n = 1$:} The functor $\pi_1$ induces an equivalence between the category of $1$-gerbes in $\mathfrak{X}_{*}$ and the category of discrete group objects in $\mathfrak{X}$.
    
    \item \textbf{For $n \geq 2$:} The functor $\pi_n$ induces an equivalence between the category of $n$-gerbes in $\mathfrak{X}_{*}$ and the category of discrete abelian group objects in $\mathfrak{X}$.
\end{enumerate}
\end{proposition}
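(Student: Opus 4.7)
The plan is to proceed by induction on $n$, with the loop space functor $\Omega: \mathfrak{X}_* \to \mathfrak{X}_*$ and its left adjoint (a delooping / classifying space construction $B$) providing the link between consecutive cases. The core identity driving the induction is $\pi_n(X) \simeq \pi_{n-1}(\Omega X)$, together with the fact that $\Omega$ sends $n$-gerbes to $(n-1)$-gerbes, since both truncation level and connectivity degree drop by one under looping.

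For the base case $n=0$, a $0$-gerbe is simply a $0$-truncated object (the connectivity condition being vacuous in the relevant convention), and $\pi_0$ restricted to $0$-truncated objects is essentially the identity onto $\text{Disc}(\mathfrak{X})$. Pointedness adds no additional structure here, so $\pi_0$ is readily seen to be an equivalence between $0$-gerbes in $\mathfrak{X}_*$ and $\text{Disc}(\mathfrak{X})$.

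For the inductive step I would exploit Giraud's effectivity of groupoid objects in the $\infty$-topos $\mathfrak{X}$: given a discrete group object $G$, one forms the bar simplicial object $G^{\bullet}$ and takes its geometric realization $BG$, yielding a pointed connected $1$-gerbe with $\pi_1(BG) \simeq G$ and $\Omega BG \simeq G$. This settles the case $n=1$. For $n \geq 2$ and an abelian group object $A$, one iterates the construction, setting $B^n A := B(B^{n-1} A)$; this is well-defined because the abelian structure on $A$ propagates to a compatible group structure on each intermediate $B^k A$. The inductive hypothesis then identifies $\pi_n(B^n A) \simeq \pi_{n-1}(B^{n-1} A) \simeq A$, and conversely, iterated looping followed by the inductive hypothesis shows that every $n$-gerbe $X$ is equivalent to $B^n \pi_n(X)$.

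The main obstacle is showing that the delooping / looping pair restricts to mutually inverse equivalences between $n$-gerbes and the respective categories of discrete (abelian) group objects. This rests on the internal analogue of the classical identification of group objects with loop spaces of their classifying spaces, which in $\mathfrak{X}$ follows from the effectivity of groupoid objects together with the description of $n$-connective morphisms as effective epimorphisms after repeated looping. For $n \geq 2$, an additional Eckmann--Hilton style coherence argument is needed inside $\mathfrak{X}$ to force $\pi_n(X)$ to be abelian and to verify that each $B^k A$ remains grouplike, so that the iterated delooping procedure actually terminates at an $n$-gerbe.
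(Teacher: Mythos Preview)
The paper does not supply its own proof of this proposition; it simply quotes it from Lurie's \emph{Higher Topos Theory} (Proposition 7.2.2.12) and uses it as a black box throughout. Your inductive strategy---reducing the degree via $\Omega$, invoking effectivity of groupoid objects to build the delooping $B$, and handling the abelian case through an Eckmann--Hilton argument---is exactly the route Lurie takes in the cited reference, so your proposal is correct and aligns with the original source.

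One small caution on the base case: your claim that ``pointedness adds no additional structure'' and that ``the connectivity condition is vacuous'' depends on reading the paper's ``$n$-connected'' as Lurie's ``$n$-connective'' (i.e., $(n{-}1)$-connected). Under that convention a pointed $0$-gerbe is just a pointed discrete object, and the equivalence with $\mathrm{Disc}(\mathfrak{X})$ is indeed tautological once one observes that the forgetful functor from pointed discrete objects to discrete objects is an equivalence on the relevant subcategory. If one reads ``$0$-connected'' literally (so that $\pi_0 X \simeq *$), the $n=0$ statement would collapse; the paper's phrasing is slightly loose here, but the intended meaning is the HTT one.
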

We now introduce the fundamental construction of Eilenberg–MacLane objects, which serve as the building blocks in the Postnikov tower as we shall see in \ref{pulldia}.
\begin{definition}
We define the functor $A \to  K(A, n)$ as the inverse of the functor $\pi_n$. 
In other words, for any discrete group object (abelian if $n\geq 2$) $A \in \text{Disc}(\mathfrak{X})$, the object $(1 \to K(A, n))$ in $\mathfrak{X}_*$ corresponds to $A$ under the equivalence provided by $\pi_n$.
$K(A, n)$ is known as an Eilenberg--MacLane object of degree $n$ banded over $A$.
\end{definition}
\begin{remark}
From \ref{EM} one can think of an Eilenberg Maclane object as a pointed $n$-gerb.
\end{remark}
In our study of pointed objects within an $\infty$-topos, the notion of the loop space plays an important role since the homotopy groups of the loop object are shifted by 1 as we will see in \ref{loophom}. We will now define the concept of loop object:

\begin{definition}

Let $\mathfrak{X}$ be an $\infty$-topos, and let $(* \to X) \in \mathfrak{X}_*$ be a pointed object. The loop object $\Omega X \in \mathfrak{X}_*$ is defined as the following pullback in $\mathfrak{X}_*$:

\[
\begin{tikzcd}
	\Omega X \arrow[r] \arrow[d] & * \arrow[d] \\
	* \arrow[r] & X
\end{tikzcd}
\]
\end{definition}
\begin{proposition}
\label{loophom}
Let \(\mathfrak{X}\) be an \(\infty\)-topos, and let $(* \xrightarrow{p_0} X) \in \mathfrak{X}_*$ be a pointed object of \(\mathfrak{X}\). Then, \(\pi_k(\Omega X, p_0) = \pi_{k+1}(X, p_0)\).
\end{proposition}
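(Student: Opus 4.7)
My plan is to interpret the statement at the basepoint level. Given a point $\xi: * \to X$, which induces the canonical constant-loop basepoint $\text{const}: * \to \Omega X$, I will prove $\pi_k(\Omega X, \text{const}) \simeq \pi_{k+1}(X, \xi)$ in $\mathfrak{X}$. This amounts to identifying the fiber of $(\Omega X)^{S^k} \to \Omega X$ over $\text{const}$ with the fiber of $X^{S^{k+1}} \to X$ over $\xi$, after which applying $\tau^{\leq 0}$ to both sides finishes the argument.

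Both fibers can be packaged as internal pointed mapping objects. For pointed $K, Y \in \mathfrak{X}_*$, let $\text{Map}^*(K, Y)$ denote the fiber of $\text{Map}(K, Y) \to Y$ at the basepoint. By Proposition~\ref{link}, when $K = \Gamma^*(L)$ for a pointed space $L$, this fiber is exactly the fiber of $Y^L \to Y$ at the basepoint. Consequently, the two fibers in question unfold to $\text{Map}^*(\Gamma^* S^{k+1}, X)$ and $\text{Map}^*(\Gamma^* S^k, \Omega X)$ respectively.

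The core step is the internal loop-suspension adjunction in $\mathfrak{X}_*$: for any $K, Y \in \mathfrak{X}_*$, the universal properties of the defining pushout $\Sigma K = * \sqcup_K *$ and pullback $\Omega Y = * \times_Y *$ yield a natural equivalence $\text{Map}^*(\Sigma K, Y) \simeq \text{Map}^*(K, \Omega Y)$. Combining this with the fact that $\Gamma^*$, as the inverse image of a geometric morphism, preserves all colimits and finite limits, and hence commutes with the suspension ($\Gamma^* S^{k+1} \simeq \Gamma^* \Sigma S^k \simeq \Sigma \Gamma^* S^k$), I obtain the desired $\text{Map}^*(\Gamma^* S^{k+1}, X) \simeq \text{Map}^*(\Gamma^* S^k, \Omega X)$. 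Taking $\tau^{\leq 0}$ then gives $\pi_{k+1}(X, \xi) \simeq \pi_k(\Omega X, \text{const})$.

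The main obstacle is careful basepoint bookkeeping: as defined, $\pi_k(\Omega X)$ and $\pi_{k+1}(X)$ naturally live in the different slice categories $\text{Disc}(\mathfrak{X}/\Omega X)$ and $\text{Disc}(\mathfrak{X}/X)$, so the identification requires fixing compatible basepoints on both sides and verifying that the internal $\Sigma \dashv \Omega$ adjunction respects them. Once this setup is in place, the computation is essentially formal, being a direct manipulation of the pushout and pullback presentations $S^{k+1} = * \sqcup_{S^k} *$ and $\Omega X = * \times_X *$ through $\Gamma^*$ and $\text{Map}(-, X)$.
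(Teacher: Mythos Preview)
Your proposal is correct and follows essentially the same route as the paper: both invoke Proposition~\ref{link} to pass to internal mapping objects, use that $\Gamma^*$ preserves colimits to identify $\Gamma^* S^{k+1}$ with $\Sigma\,\Gamma^* S^k$, and then appeal to the suspension--loop relationship. Your explicit passage to the pointed mapping object $\mathrm{Map}^*$ is precisely what the paper's unpointed step $\mathrm{Map}(\Gamma^* S^n, \Omega X) \simeq \mathrm{Map}(\Sigma\,\Gamma^* S^n, X)$ tacitly requires (that identity fails for the unpointed internal hom, where the right-hand side is $X \times_{X^{S^n}} X$ rather than $* \times_{X^{S^n}} *$), so your basepoint bookkeeping is a genuine refinement rather than a detour.
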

\begin{proof}
First, observe that the functor $(-)^{S^n}$  preserves limits because it is itself a limit. As a result, this functor commutes with the loop space operation(all limits are taken in $ \mathfrak{X}_*$).
Therefore $\Omega( X^{S^n})\simeq ( \Omega X)^{S^n} $  and from \ref{link} we know that $( \Omega X)^{S^n} \simeq Map(\Gamma^*(S^n), \Omega X) $.
Now since the internal mapping object preserves all limits we get that 
\[Map(\Gamma^*(S^n), \Omega X)\simeq Map(\Sigma (\Gamma^*(S^n)),  X)\]

Since $\Gamma^*(S^n)$ is a left adjoint, it commutes with colimits, therefore \[
Map(\Sigma (\Gamma^*(S^n)),  X) \simeq Map( \Gamma^*(\Sigma S^n),  X)
\simeq \]
\[Map( \Gamma^*(S^{n+1}),  X)) \simeq X^{S^{n+1}} \]
Therefore, the homotopy groups of $\Omega X$ are shifted by one degree, meaning that \(\pi_k(\Omega X, p_0) = \pi_{k+1}(X, p_0)\).
\end{proof}

We can now observe that the loop space of an Eilenberg–MacLane space is itself an Eilenberg–MacLane space, but of one degree lower:

\begin{corollary}
\label{loop_EM}
Let $\mathfrak{X}$ be an $\infty$-topos, let $n\geq 1$, and let $A \in \text{Gr}(\text{Disc}(\mathfrak{X}))$ be a discrete group object in $\mathfrak{X}$, which is required to be abelian if $n\geq 2$. Then, the loop space of the Eilenberg--MacLane object $K(A, n)$ is $K(A, n-1)$.
\end{corollary}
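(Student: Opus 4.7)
The plan is to verify the claim by computing the homotopy groups of $\Omega K(A,n)$ and then invoking Proposition~\ref{EM} to identify $\Omega K(A,n)$ as the appropriate Eilenberg--MacLane object.

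First I would use Proposition~\ref{loophom} to translate the claim into a statement about homotopy groups. By the definition of $K(A,n)$ as the inverse of $\pi_n$ on $n$-gerbes, the object $K(A,n)$ is both $n$-truncated and $n$-connective, with $\pi_n(K(A,n)) \simeq A$ and $\pi_k(K(A,n)) = 0$ for $k \neq n$. Proposition~\ref{loophom} then gives
\[
\pi_k(\Omega K(A,n)) \simeq \pi_{k+1}(K(A,n)),
\]
so $\pi_{n-1}(\Omega K(A,n)) \simeq A$ and all other homotopy groups vanish.

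Next I would show that $\Omega K(A,n)$ is an $(n-1)$-gerbe. Since loops preserve limits, $\Omega K(A,n)$ is pointed via the constant loop at the basepoint of $K(A,n)$. The vanishing of $\pi_k$ for $k < n-1$ makes $\Omega K(A,n)$ $(n-1)$-connective, and the vanishing of $\pi_k$ for $k > n-1$ (together with connectivity, via the characterization of truncation by homotopy groups used throughout this section and based on \cite[Prop.~7.2.1.7]{HTT}) makes it $(n-1)$-truncated. Thus $\Omega K(A,n)$ is an $(n-1)$-gerbe with $\pi_{n-1} \simeq A$.

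Finally, I would invoke Proposition~\ref{EM} in the appropriate range: for $n=1$ part (1) identifies $0$-gerbes with discrete objects; for $n=2$ part (2) identifies $1$-gerbes with discrete group objects (and here $A$ is abelian, hence in particular a group); and for $n \geq 3$ part (3) identifies $(n-1)$-gerbes with discrete abelian group objects. In each case $\Omega K(A,n)$ corresponds under $\pi_{n-1}$ to the same discrete (abelian, when required) group $A$ as $K(A,n-1)$, so by the equivalences of Proposition~\ref{EM} we obtain $\Omega K(A,n) \simeq K(A,n-1)$. The main obstacle is the bookkeeping across the three cases $n=1,\,n=2,\,n\geq 3$, since the relevant flavor of the equivalence in Proposition~\ref{EM} changes, but the homotopy-group computation is uniform and handles all cases simultaneously.
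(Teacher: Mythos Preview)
Your proposal is correct and follows essentially the same approach as the paper: the paper's proof is a one-line invocation of Proposition~\ref{EM} together with Proposition~\ref{loophom}, and you have simply unpacked that invocation by explicitly computing the homotopy groups of $\Omega K(A,n)$, verifying it is an $(n-1)$-gerbe, and then applying the case-by-case equivalence of Proposition~\ref{EM}. No alternative route is taken.
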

\begin{proof}
Applying \ref{EM}, and the previous lemma we see that the loop space is an $(n-1)$-Eilenberg--MacLane object $K(A, n-1)$ banded by $A$.
\end{proof}
The following lemma captures an important property of truncation maps in an $\infty$-topos, showing that these maps naturally form $n$-gerbes:
\begin{lemma}
\label{truncngerb}
Let $\mathfrak{X}$ be an $\infty$-topos, and let $X \in \mathfrak{X}$. Then the map $\tau^{\leq n}X \xrightarrow{f} \tau^{\leq n-1}X$ is an $n$-gerbe. Moreover, $\pi_n(f) \simeq \pi_n(X) \in \text{Disc}(\mathfrak{X}/X) \simeq \text{Disc}(\mathfrak{X}/\tau^{\leq 1}X)$.
\end{lemma}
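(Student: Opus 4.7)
The plan is to regard the morphism $f: \tau^{\leq n}X \to \tau^{\leq n-1}X$ as an object of the slice $\infty$-topos $\mathfrak{X}/\tau^{\leq n-1}X$ and verify separately that this object is $n$-truncated and $n$-connective, and then identify the corresponding homotopy sheaf $\pi_n(f)$.

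For $n$-truncatedness, I would appeal to the standard fact that any morphism between two $n$-truncated objects of an $\infty$-topos is $n$-truncated (its fibers, being pullbacks of $n$-truncated objects, are themselves $n$-truncated). Here $\tau^{\leq n}X$ is $n$-truncated by construction while $\tau^{\leq n-1}X$ is $(n-1)$-truncated, hence in particular $n$-truncated, so $f$ is an $n$-truncated morphism, which is exactly $n$-truncatedness as an object of the slice.

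For $n$-connectivity, the key observation is that $f$ is canonically identified with the unit of the $(n-1)$-truncation reflection applied to the object $\tau^{\leq n}X$. This uses the idempotent-style identification $\tau^{\leq n-1} \simeq \tau^{\leq n-1}\circ\tau^{\leq n}$ that comes from the nesting of the truncation reflections. Since the unit $Y \to \tau^{\leq k}Y$ of $k$-truncation in any $\infty$-topos is $(k+1)$-connective (a standard property of the truncation reflection, cf.\ HTT §6.5), taking $k = n-1$ shows that $f$ is $n$-connective, and together with $n$-truncatedness this yields the $n$-gerbe property.

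For the statement about $\pi_n$, I would analyze the fiber sequence $F \to \tau^{\leq n}X \to \tau^{\leq n-1}X$ at any basepoint. By the previous steps each fiber $F$ is $n$-truncated and $n$-connective, i.e., an $n$-gerbe in $\mathfrak{X}$; its only nontrivial homotopy sheaf is $\pi_n(F)$, which the long exact sequence of homotopy sheaves (together with the vanishing of $\pi_n,\pi_{n+1}$ of the $(n-1)$-truncated target) identifies with $\pi_n(\tau^{\leq n}X) \simeq \pi_n(X)$. The sheaf $\pi_n(f)$ naturally lives in $\text{Disc}(\mathfrak{X}/\tau^{\leq n}X)$, and Lemma~\ref{disobj} identifies this category with $\text{Disc}(\mathfrak{X}/X) \simeq \text{Disc}(\mathfrak{X}/\tau^{\leq 1}X)$; under this equivalence $\pi_n(f)$ corresponds to $\pi_n(X)$. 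The main obstacle I expect will be the bookkeeping of these several slice categories and discrete subcategories, and ensuring that the comparison between $\pi_n(f)$ and $\pi_n(X)$ is carried out in a common ambient category via the canonical equivalences furnished by Lemma~\ref{disobj}.
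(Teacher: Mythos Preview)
Your argument is correct and closely parallels the paper's proof. For $n$-truncatedness you and the paper invoke the same standard fact (the paper cites HTT~5.5.6.14 directly), and for the identification $\pi_n(f)\simeq\pi_n(X)$ both of you run the long exact sequence of homotopy sheaves, using that $\pi_n$ and $\pi_{n+1}$ of the $(n-1)$-truncated target vanish while $\pi_n(\tau^{\leq n}X)\simeq\pi_n(X)$.

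The one genuine difference is in how $n$-connectivity is established. You observe that $f$ is the unit of $(n-1)$-truncation applied to $\tau^{\leq n}X$ and conclude $n$-connectivity in one step from the connective/truncated factorization property of truncation. The paper instead first uses the long exact sequence associated to the composite $\tau^{\leq n}X\xrightarrow{f}\tau^{\leq n-1}X\xrightarrow{g}*$ to show $\pi_k(f)=*$ for $k<n$, and then separately invokes HTT~7.2.1.14 to see that $f$ is an effective epimorphism. Your route is a bit more economical since it delivers the effective-epimorphism condition for free; the paper's route has the advantage that it computes $\pi_n(f)$ and establishes connectivity in the same pass through the exact sequence. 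Either way the content is the same, and your bookkeeping via Lemma~\ref{disobj} for comparing the ambient discrete categories is exactly what the paper relies on as well.
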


\begin{proof}
By {\cite[ Lemma 5.5.6.14]{HTT}}, the map $f$ is $n$-truncated.

Consider the exact sequence from {\cite[Remark 6.5.1.5]{HTT}} for the composition:
\[
\tau^{\leq n}X \xrightarrow{f} \tau^{\leq n-1}X \xrightarrow{g} *.
\]
This gives the sequence:
\[
\cdots \to f^* \pi_{n+1}(g) \to \pi_n(f) \to \pi_n(g \circ f) \to f^* \pi_n(g) \to \pi_{n-1}(f) \to \cdots
\]

Note that:
\[
\pi_n(g) = \pi_n(\tau^{\leq n-1}X).
\]
From {\cite[ Lemma 6.5.1.9]{HTT}}, we know for all $k < n$:
\[
\pi_n(g \circ f) \simeq \pi_n(\tau^{\leq n}X) \simeq f^* \pi_k(\tau^{\leq n-1}X) \simeq f^* \pi_k(g).
\]

Thus, using the exact sequence we conclude:
\begin{enumerate}
    \item $\pi_k(f) = *$ for all $k < n$.
    \item $\pi_n(f) = \pi_n(X)$.
\end{enumerate}

Finally, using {\cite[Proposition 7.2.1.14]{HTT}}, we conclude that $f$ is an effective epimorphism, hence \textbf{$f$ is $(n-1)$-connected}.
\end{proof}
Next, we establish a key structural property for 1-connected objects in an $\infty$-topos. This result is analogous to the classical theory of Postnikov invariants for principal fibrations, providing a way to understand the layers of an object's homotopy. 

\begin{lemma}
\label{pulldia}
Let $X \in \mathfrak{X}$ be a 1-connected object in an $\infty$-topos. Then, there exists a pullback diagram:
\[
\begin{tikzcd}
    \tau^{\leq n}X \arrow[r] \arrow[d] & \tau^{\leq n-1}X \arrow[d] \\
    * \arrow[r] & K(\pi_n(X), n+1)
\end{tikzcd}
\]
\end{lemma}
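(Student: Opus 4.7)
My plan is to reduce the statement to the classification of $n$-gerbes by Eilenberg--MacLane objects, which has essentially been set up in the preceding lemmas; the cases $n \leq 1$ are vacuous by $1$-connectedness, so one focuses on $n \geq 2$. By Lemma \ref{truncngerb}, the truncation map $f \colon \tau^{\leq n}X \to \tau^{\leq n-1}X$ is an $n$-gerbe whose band is $\pi_n(X)$. Since $X$ is $1$-connected, so is $\tau^{\leq n-1}X$, and Lemma \ref{disobj} then lets us regard $\pi_n(X)$ as a discrete abelian group object of $\mathfrak{X}$ itself, so that the global Eilenberg--MacLane object $K(\pi_n(X), n+1) \in \mathfrak{X}$ is well defined and canonically pointed. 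It therefore suffices to produce a classifying map $\tau^{\leq n-1}X \to K(\pi_n(X), n+1)$ exhibiting $f$ as the pullback of the basepoint along it.

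The central input is the universal property of $K(A, n+1)$ as a classifier of $n$-gerbes: for any $B \in \mathfrak{X}$ and any discrete abelian group object $A$, equivalence classes of $n$-gerbes over $B$ with band $A$ are in bijection with maps $B \to K(A, n+1)$, and every such gerbe is recovered as the pullback of the pointing $\ast \to K(A, n+1)$. Applied to $f$ with $A = \pi_n(X)$ and $B = \tau^{\leq n-1}X$, this immediately yields the asserted square. To justify the universal property from what is already available, I would induct on $n$: the base case $n = 1$ is a reformulation of Proposition \ref{EM}, since a $1$-gerbe over $B$ with band $A$ is an $A$-torsor, classified by maps into $BA \simeq K(A,2)$; the inductive step uses Corollary \ref{loop_EM} together with the loop/delooping adjunction in $\mathfrak{X}_\ast$ to transfer the classification from degree $n-1$ to degree $n$. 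Alternatively, one may invoke \cite[Lemma 7.2.2.26]{HTT} directly, which records exactly this classification in Lurie's setup.

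The main obstacle, I expect, is the passage from the pointed to the unpointed classification. Proposition \ref{EM} supplies an equivalence between pointed $n$-gerbes and discrete group objects via $\pi_n$, but the required classifying map lives in the unpointed setting because the gerbe $f$ need not admit a global section over all of $\tau^{\leq n-1}X$. Overcoming this calls for a base-change argument in the slice $\mathfrak{X}/\tau^{\leq n-1}X$, where the band globalizes thanks to the identification $\text{Disc}(\mathfrak{X}/\tau^{\leq n-1}X) \simeq \text{Disc}(\mathfrak{X})$ from Lemma \ref{disobj}; after this identification, applying the pointed classification to the base-changed gerbe produces the global classifying map, and the pullback property of the resulting square follows formally.
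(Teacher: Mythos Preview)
Your proposal is correct and takes essentially the same route as the paper: use Lemma~\ref{truncngerb} to identify the truncation map as an $n$-gerbe banded by $\pi_n(X)$, then invoke \cite[Lemma 7.2.2.26]{HTT}, which says that $* \to K(\pi_n(X), n+1)$ is the final object among such gerbes and that morphisms in $\mathrm{Gerb}^{\pi_n(X)}_n(\mathfrak{X})$ are exactly pullback squares. The paper's proof is precisely this two-line argument, so your inductive alternative and the pointed/unpointed discussion are extra elaboration (and note the slip in your base case: a $1$-gerbe banded by $A$ is not an $A$-torsor, and $BA = K(A,1) \not\simeq K(A,2)$).
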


\begin{proof}
From Lemma \ref{truncngerb}, we know that the map $\tau^{\leq n}X \to \tau^{\leq n-1}X$ belongs to the category of $n$-gerbes banded over $\pi_n(X)$, denoted $Gerb^{\pi_n(X)}_n(\mathfrak{X})$(see {\cite[Remark 7.2.2.23]{HTT}}  for a detailed definition of $Gerb^{A}_n(\mathfrak{X})$ ). 
From the proof of \cite[Lemma 7.2.2.26]{HTT}, the map $1 \to K(\pi_n(X), n+1)$ is the final object in $Gerb^{\pi_n(X)}_n(\mathfrak{X})$. Therefore, we obtain the following pullback diagram(Morphisms in $Gerb^{\pi_n(X)}_n(\mathfrak{X})$ are given by pullbacks diagram):
\[
\begin{tikzcd}
    \tau^{\leq n}X \arrow[r] \arrow[d] & \tau^{\leq n-1}X \arrow[d] \\
    1 \arrow[r] & K(\pi_n(X), n+1)
\end{tikzcd}
\]
\end{proof}
Before we move forward, it is important to understand how geometric morphisms interact with Eilenberg--MacLane objects. 

\begin{proposition}
Let $f: \mathfrak{X} \to \mathfrak{Y}$ be a geometric morphism, and let $f^*: \mathfrak{Y} \to \mathfrak{X}$ be its pullback functor. Then,for any integer $n$ and any group object $G$ (abelian if $n\geq 2$)  in $\mathfrak{Y}$ , we have:
\[
f^*(K(G,n)) = K(f^* G, n),
\]
\end{proposition}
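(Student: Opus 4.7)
The plan is to verify that $f^*K(G,n)$ satisfies the characterizing properties of an Eilenberg--MacLane object banded over $f^*G$ (namely, a pointed $n$-gerbe whose $n$-th homotopy sheaf is $f^*G$), and then invoke the uniqueness part of Proposition~\ref{EM}.

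First I would record the relevant preservation properties of $f^*$. As the inverse image of a geometric morphism, $f^*$ is both left exact and colimit-preserving. Left exactness implies that $f^*$ sends the terminal to the terminal (so pointings are transported), preserves diagonals and loop objects, and sends (abelian) group objects to (abelian) group objects, since these structures are encoded by finite-limit diagrams. Colimit preservation implies that $f^*$ preserves effective epimorphisms, since such maps are colimits of their \v{C}ech nerves. In particular $f^*$ sends discrete objects to discrete objects.

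Next I would show that $f^*$ preserves both $n$-truncatedness and $n$-connectedness, so that it sends $n$-gerbes to $n$-gerbes. Both notions admit inductive characterizations: a map is $n$-truncated iff its diagonal is $(n-1)$-truncated (with base case $(-2)$-truncated meaning equivalence), and $n$-connected iff it is an effective epimorphism whose diagonal is $(n-1)$-connected. Since $f^*$ preserves equivalences, diagonals, and effective epimorphisms, these properties transport by induction. A direct corollary, which I use below, is that $f^*$ commutes with the truncation functors: applying $f^*$ to the factorization $X \to \tau^{\leq k}X \to *$ produces an $k$-connected map to an $k$-truncated object, and such factorizations are unique, so $f^*\tau^{\leq k}X \simeq \tau^{\leq k}f^*X$.

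With these preliminaries $f^*K(G,n)$ is a pointed $n$-gerbe in $\mathfrak{X}$, and it remains to identify its $n$-th homotopy sheaf. For a pointed connected object $X$, the definition of $\pi_n$ combined with the identification of the fiber of $X^{S^n} \to X$ over the basepoint gives $\pi_n(X) \simeq \tau^{\leq 0}\Omega^n X$ as a discrete object. Iterating Corollary~\ref{loop_EM} yields $\Omega^n K(G,n) \simeq G$ in $\text{Disc}(\mathfrak{Y})$; since $f^*$ commutes with $\Omega$ (a finite limit) and with $\tau^{\leq 0}$ (by the previous paragraph), we obtain $\pi_n(f^*K(G,n)) \simeq f^*G$. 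The equivalence in Proposition~\ref{EM} then forces $f^*K(G,n) \simeq K(f^*G,n)$. The main obstacle in this plan is justifying that $f^*$ respects the truncation/connectivity factorization; this is not technically deep, but it is essential and is what allows us to transport the purely categorical characterization of $K(-,n)$ across the geometric morphism.
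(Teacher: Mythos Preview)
Your argument is correct and follows the same strategy as the paper: verify that $f^*K(G,n)$ is a pointed $n$-gerbe with $\pi_n \simeq f^*G$, then invoke the uniqueness in Proposition~\ref{EM}. The paper simply cites \cite[Remark 6.5.1.4]{HTT} for the preservation of $n$-gerbes and homotopy sheaves under $f^*$, whereas you unpack that content directly via the inductive characterizations of truncatedness and connectedness and the identification $\pi_n \simeq \tau^{\leq 0}\Omega^n$.
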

\begin{proof}
By {\cite[Remark 6.5.1.4]{HTT}}, $f^*(K(G, n))$ is an $n$-gerbe banded over $f^* G$. Using \ref{EM}, this is precisely the object $K(f^* G, n)$.
\end{proof}

\begin{corollary}
\label{EM-transfer}
Let $G$ be a group object in $\text{Disc}(\mathfrak{X}/X)$, let $K^{/X}(G,n)$ denote an Eilenberg--MacLane object in $\mathfrak{X}/X$, banded over $G$, let $\xi:*\rightarrow X$ a point of $X$, and define $G_0 := G \times_\zeta *$. The following diagram is a pullback diagram:
\[
\begin{tikzcd}
    K(G_0, n) \arrow[r] \arrow[d] & {K^{/X}(G,n)} \arrow[d] \\
    1 \arrow[r, "\zeta"'] & X
\end{tikzcd}
\]

\end{corollary}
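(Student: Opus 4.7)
The plan is to invoke the preceding proposition on preservation of Eilenberg--MacLane objects by geometric morphisms, applied to the geometric morphism associated with the point $\zeta:* \to X$.

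First, I would observe that the point $\zeta:* \to X$ induces a geometric morphism $\zeta^*:\mathfrak{X}/X \to \mathfrak{X}/* \simeq \mathfrak{X}$, whose inverse image (pullback) functor sends an object $(Y \to X) \in \mathfrak{X}/X$ to $Y \times_X * \in \mathfrak{X}$. In particular, applied to the projection $G \to X$ in $\mathrm{Disc}(\mathfrak{X}/X)$, this functor yields $\zeta^*(G) = G \times_X * = G_0$, which is a discrete group object in $\mathfrak{X}$ (abelian if $n \geq 2$), so the Eilenberg--MacLane object $K(G_0, n)$ is defined.

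Next, I would apply the previous proposition to the geometric morphism $\zeta^*$ and the group object $G$. This gives the equivalence
\[
\zeta^*\!\left(K^{/X}(G, n)\right) \simeq K(\zeta^* G, n) = K(G_0, n),
\]
where I interpret $K^{/X}(G,n)$ as an object of $\mathfrak{X}/X$ (which is the relevant topos for the proposition).

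Finally, translating back from $\mathfrak{X}/X$ to $\mathfrak{X}$, the inverse image $\zeta^*(K^{/X}(G,n))$ is by definition the pullback $K^{/X}(G,n) \times_X *$. Thus the square
\[
\begin{tikzcd}
    K(G_0, n) \arrow[r] \arrow[d] & K^{/X}(G,n) \arrow[d] \\
    1 \arrow[r, "\zeta"'] & X
\end{tikzcd}
\]
is a pullback, as required. The only real subtlety is being careful that the Eilenberg--MacLane object really does live in the slice topos and that the preceding proposition therefore applies verbatim; once that is checked, no further obstacle remains and the statement reduces to a direct application of the proposition.
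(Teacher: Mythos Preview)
Your proof is correct and follows essentially the same approach as the paper, which simply invokes the preceding proposition for the \'etale geometric morphism $\mathfrak{X}\simeq\mathfrak{X}/* \to \mathfrak{X}/X$ induced by $\zeta$. One small terminological slip: what you call ``a geometric morphism $\zeta^*:\mathfrak{X}/X \to \mathfrak{X}$'' is actually the inverse image functor of the geometric morphism, which in the usual convention points the other way ($\mathfrak{X}\to\mathfrak{X}/X$), but this does not affect the argument.
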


\begin{proof}
This result follows directly from the previous proposition for the etale geometric morphism $\mathfrak{X} \to \mathfrak{X}/X$.
\end{proof}

\begin{corollary}
\label{pulltrunc}
Let $\mathfrak{X}$ be an $\infty$-topos, and let $X \in \mathfrak{X}$ with a point $\zeta: * \to X$. Then, we have the following pullback diagram:
\[
\begin{tikzcd}
    K(\pi_n(X, \zeta), n) \arrow[r] \arrow[d] & \tau^{\leq n}X \arrow[d] \\
    {*} \arrow[r, "\tau^{\leq n-1}\zeta"] & \tau^{\leq n-1}X
\end{tikzcd}
\]
\end{corollary}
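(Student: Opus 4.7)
The plan is to obtain the pullback square by combining Lemma~\ref{truncngerb} with Corollary~\ref{EM-transfer}. First, Lemma~\ref{truncngerb} tells us that the truncation map $f: \tau^{\leq n}X \to \tau^{\leq n-1}X$ is an $n$-gerbe in the slice $\mathfrak{X}/\tau^{\leq n-1}X$ banded over the homotopy sheaf $\pi_n(X)$, where we use Lemma~\ref{disobj} to regard $\pi_n(X)$ as a discrete object over $\tau^{\leq n-1}X$ rather than over $X$. By the equivalence of Proposition~\ref{EM} (applied internally in the slice topos $\mathfrak{X}/\tau^{\leq n-1}X$), this $n$-gerbe is precisely the Eilenberg--MacLane object $K^{/\tau^{\leq n-1}X}(\pi_n(X), n)$.

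Next, I would apply Corollary~\ref{EM-transfer} to the object $\tau^{\leq n-1}X \in \mathfrak{X}$, equipped with its point $\tau^{\leq n-1}\zeta: * \to \tau^{\leq n-1}X$, and with the Eilenberg--MacLane object $K^{/\tau^{\leq n-1}X}(\pi_n(X), n)$. This directly produces the pullback diagram
\[
\begin{tikzcd}
    K(G_0, n) \arrow[r] \arrow[d] & {K^{/\tau^{\leq n-1}X}(\pi_n(X), n)} \arrow[d] \\
    {*} \arrow[r, "\tau^{\leq n-1}\zeta"'] & \tau^{\leq n-1}X
\end{tikzcd}
\]
where $G_0 = \pi_n(X) \times_{\tau^{\leq n-1}X} *$ is the fiber of $\pi_n(X)$ along $\tau^{\leq n-1}\zeta$. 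Combined with the identification of the right-hand column with $\tau^{\leq n}X \to \tau^{\leq n-1}X$ from the first step, this produces a diagram of the correct shape.

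The only step that requires a small argument is the identification $G_0 \simeq \pi_n(X, \zeta)$. By definition $\pi_n(X, \zeta) = \zeta^* \pi_n(X)$, where $\zeta^*$ is the pullback along the étale morphism $\mathfrak{X}/* \to \mathfrak{X}/X$. On the other hand, $G_0$ is obtained by pulling back $\pi_n(X)$ regarded as an object of $\mathfrak{X}/\tau^{\leq n-1}X$ along $\tau^{\leq n-1}\zeta$. Since the point $\zeta: * \to X$ factors the point $\tau^{\leq n-1}\zeta$ through the truncation map $X \to \tau^{\leq n-1}X$, and since the equivalence $\text{Disc}(\mathfrak{X}/X) \simeq \text{Disc}(\mathfrak{X}/\tau^{\leq n-1}X)$ from Lemma~\ref{disobj} is implemented precisely by pullback along this truncation map, the two pullback operations agree. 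Hence $G_0 \simeq \pi_n(X, \zeta)$, and substituting this into the square above yields the claimed diagram.

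I do not expect any serious obstacle here: the nontrivial content is already packaged into Lemmas~\ref{truncngerb} and~\ref{disobj} and Corollary~\ref{EM-transfer}, and the argument amounts to assembling them and checking the compatibility of base-change along $\zeta$ and along $\tau^{\leq n-1}\zeta$.
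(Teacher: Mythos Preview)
Your argument has a genuine gap at the second step. You claim that, by Proposition~\ref{EM} applied in the slice $\mathfrak{X}/\tau^{\leq n-1}X$, the $n$-gerbe $f:\tau^{\leq n}X \to \tau^{\leq n-1}X$ is the Eilenberg--MacLane object $K^{/\tau^{\leq n-1}X}(\pi_n(X),n)$. But Proposition~\ref{EM} gives an equivalence between \emph{pointed} $n$-gerbes and group objects; an unpointed $n$-gerbe need not be an Eilenberg--MacLane object. A pointing of $f$ in the slice would be a section $\tau^{\leq n-1}X \to \tau^{\leq n}X$, and such a section generally does not exist: the obstruction is exactly the Postnikov $k$-invariant of $X$. (For instance, $\tau^{\leq 3}S^2 \to \tau^{\leq 2}S^2$ is a $3$-gerbe banded over $\mathbb{Z}$ that is \emph{not} $K(\mathbb{Z},2)\times K(\mathbb{Z},3)\to K(\mathbb{Z},2)$.) Consequently Corollary~\ref{EM-transfer}, whose hypothesis is that the input be an Eilenberg--MacLane object, does not apply as stated.

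The fix is easy and in fact makes your argument cleaner than the paper's. Do not try to identify $f$ itself as an Eilenberg--MacLane object; instead pull back first. Pullback along the \'etale geometric morphism $\mathfrak{X}\simeq\mathfrak{X}/* \to \mathfrak{X}/\tau^{\leq n-1}X$ induced by $\tau^{\leq n-1}\zeta$ preserves $n$-connected and $n$-truncated maps, so the fiber $F := \tau^{\leq n}X \times_{\tau^{\leq n-1}X} *$ is an $n$-gerbe in $\mathfrak{X}$ banded over $(\tau^{\leq n-1}\zeta)^*\pi_n(X)$. Now $F$ \emph{is} pointed, namely by $\tau^{\leq n}\zeta$, so Proposition~\ref{EM} applies directly in $\mathfrak{X}$ and identifies $F \simeq K(\pi_n(X,\zeta),n)$; your compatibility argument for the band via Lemma~\ref{disobj} goes through unchanged. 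This bypasses Corollary~\ref{EM-transfer} altogether. The paper's proof reaches the same conclusion by a longer route (passing through Lemma~\ref{pulldia} for $1$-connected objects, looping down via Corollary~\ref{loop_EM}, and then pasting two pullback squares over $\tau^{\leq 1}X$); your corrected argument is more direct.
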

\begin{proof}
The map $\tau^{\leq n}X \to \tau^{\leq n-1}X$ can be viewed as a morphism in $\mathfrak{X}_*$, equipped with the basepoint induced by $\zeta$.  
By \ref{truncngerb}, this map exhibits $\tau^{\leq n}X \to \tau^{\leq n-1}X$ as an $n$-gerbe banded by 
\[
\pi_n(X) \in \mathrm{Disc}(\mathfrak{X}_*/X) \xrightarrow{\; \zeta^* \;} \mathrm{Disc}(\mathfrak{X}_*).
\]  
Hence, by {\cite[Proposition 6.5.1.16 (6)]{HTT}}, its fiber $F$ is itself an $n$-gerbe in $\mathfrak{X}_*$.  
Finally, applying \ref{EM}, we deduce that $F$ is an Eilenberg–MacLane object, and since from {\cite[Remark 6.5.1.4. ]{HTT}} $\pi_n$ commutes with pullback along geometric morphism $F$ is banded by $\pi_n(X,\zeta)$.
\end{proof}

\subsection{Finiteness Properties}
This subsection provides a brief partial summary of Sections A2 and A7 from \cite{SAG}.
\subsubsection{Coherent Topoi}
We now introduce the concepts of coherent objects and coherent topoi, which will play a crucial role in our study of the embedding of pro-$\pi$-finite spaces into pyknotic spaces. Later, we will show that pro-$\pi$-finite spaces are, in fact, coherent objects and that the global sections functor is conservative on these coherent objects.
To begin, we define the notion of quasi-compactness that corresponds to $0$-coherence.

\begin{definition}[{\cite[Definition A.2.0.12.]{SAG}}]

Let \(\mathfrak{X}\) be an \(\infty\)-topos. We say that \(\mathfrak{X}\) is \emph{quasi-compact} provided that every covering of the terminal object \(* \in \mathfrak{X}\) admits a finite subcover. 
In other words, for any effective epimorphism
\[
\coprod_{\lambda \in \Lambda} V_\lambda \to *,
\]
there exists a finite subset \( J \subseteq \Lambda \) such that the induced morphism
\[
\coprod_{j \in J} V_j \to *
\]
is also an effective epimorphism
Furthermore, an object \(X \in \mathfrak{X}\) is called \emph{quasi-compact} (or \emph{0-coherent}) if the slice \(\infty\)-topos \(\mathfrak{X}/X\) is quasi-compact.
\end{definition}
To further develop the concept of coherence in an $\infty$-topos, we use a notion that builds inductively. We already defined what it means for an $\infty$-topos to be \emph{0-coherent} and now we will extend this idea to higher levels of coherence. The formal definition is as follows:

\begin{definition}[{\cite[Definition A.2.0.12.]{SAG}}]
$  $
\begin{enumerate}[label=(\alph*)]

        \item \textbf{Base Case (\(n = 0\))}: An \(\infty\)-topos \(\mathfrak{X}\) is called \emph{0-coherent} if it is quasi-compact.
        
        \item \textbf{Inductive Step}: Assume the notion of \(n\)-coherent is defined for some \(n \geq 0\).
        \begin{enumerate}[label=(\roman*)]
            \item An object \(U \in \mathfrak{X}\) is \emph{\(n\)-coherent} if the slice \(\infty\)-topos \(\mathfrak{X}_{/U}\) is \(n\)-coherent.
            
            \item The \(\infty\)-topos \(\mathfrak{X}\) is \emph{locally \(n\)-coherent} if every object \(X \in \mathfrak{X}\) admits an effective epimorphism from a coproduct of \(n\)-coherent objects:
            \[
            \coprod_{i} U_i \to X \quad \text{with each } U_i \text{ being } n\text{-coherent}.
            \]
            
            \item \(\mathfrak{X}\) is \emph{\((n+1)\)-coherent} provided it is locally \(n\)-coherent and the \(n\)-coherent objects are closed under finite products.
\end{enumerate}
\end{enumerate}
\end{definition}

\begin{definition}
We say that $\mathfrak{X}$ is coherent if it is $n$-coherent for all $n$, an object $X$ is coherent if $\mathfrak{X}/X$ is coherent 
\end{definition}
Recall that a Grothendieck topology is finitary if every cover has a finite subcover
\begin{theorem}[{{\cite[Proposition A.3.1.3.]{SAG}}}]
\label{A.3.1.3}
Consider a small \( \infty \)-category \( \mathcal{C} \) that has pullbacks and is endowed with a finitary Grothendieck topology, then:
\begin{enumerate}
    \item  The functor \( j: \mathcal{C} \to \operatorname{Shv}(\mathcal{C}) \), obtained by composing the Yoneda embedding \( \mathcal{C} \hookrightarrow \operatorname{PSh}(\mathcal{C}) \) with the sheafification functor \( \operatorname{PSh}(\mathcal{C}) \to \operatorname{Shv}(\mathcal{C}) \), maps every object \( C \in \mathcal{C} \) to a coherent object in \( \operatorname{Shv}(\mathcal{C}) \). 

    \item The \( \infty \)-topos \( \operatorname{Shv}(\mathcal{C}) \) is \textbf{locally coherent} - any object \( X \) in \( \operatorname{Shv}(\mathcal{C}) \), there exists a covering family \( \{ U_i \to X \} \) where each \( U_i \) is a coherent object. 

    \item If the category \( \mathcal{C} \) possesses a final (terminal) object, then the entire sheaf topos \( \operatorname{Shv}(\mathcal{C}) \) is \textbf{coherent}.
\end{enumerate}
\end{theorem}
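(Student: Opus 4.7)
The plan is to prove all three parts together by reducing (2) and (1) to the coherence statement (3), and then establishing (3) by induction on the coherence level $n$.

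For the reductions, I would first derive (2) from (1): every sheaf $X\in\operatorname{Shv}(\mathcal{C})$ admits an effective epimorphism $\coprod_\alpha j(C_\alpha)\to X$ from a coproduct of representables, and once each $j(C_\alpha)$ is coherent this exhibits local coherence. I would then reduce (1) to (3) via the standard slice equivalence $\operatorname{Shv}(\mathcal{C})/j(C)\simeq\operatorname{Shv}(\mathcal{C}/C)$, noting that $\mathcal{C}/C$ inherits pullbacks and the finitary topology from $\mathcal{C}$ and acquires a terminal object $\mathrm{id}_C$. Thus $j(C)$ is coherent in $\operatorname{Shv}(\mathcal{C})$ if and only if $\operatorname{Shv}(\mathcal{C}/C)$ is coherent as a topos, which is precisely (3) applied to $\mathcal{C}/C$.

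For (3), assuming $\mathcal{C}$ has a terminal object $*$, I would induct on $n$ to prove $\operatorname{Shv}(\mathcal{C})$ is $n$-coherent. In the base case $n=0$, the terminal sheaf is $j(*)$ since $j$ is Yoneda followed by left-exact sheafification and so preserves finite limits; any effective epimorphism $\coprod_\lambda V_\lambda\to j(*)$ can be refined by representables to a covering family $\{C_i\to *\}_{i\in I}$ in $\mathcal{C}$, and finitariness of the topology extracts a finite subcover which pulls back to a finite subcover of the $V_\lambda$. For the inductive step, $(n+1)$-coherence requires local $n$-coherence together with closure of $n$-coherent objects under finite products. Local $n$-coherence follows because representables cover every sheaf, and each $j(C)$ is $n$-coherent by the induction hypothesis applied to $\mathcal{C}/C$ through the slice equivalence. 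For closure under binary products, the terminal $j(*)$ is $n$-coherent by the same slice argument, and for a general product $U\times V$ of $n$-coherent objects I would use quasi-compactness to pick finite representable covers $\coprod_{i=1}^{k} j(C_i)\twoheadrightarrow U$ and $\coprod_{j=1}^{\ell} j(D_j)\twoheadrightarrow V$, observe that $j(C_i)\times j(D_j)\simeq j(C_i\times D_j)$ (using that $\mathcal{C}$ admits products via pullback over $*$ and Yoneda preserves limits) is $n$-coherent by the slice argument applied to each $C_i\times D_j$, and descend $n$-coherence from this finite cover to $U\times V$.

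The hardest part will be the final descent step in the inductive argument: verifying that an object covered by finitely many $n$-coherent representables, whose iterated Čech pullbacks are also $n$-coherent, is itself $n$-coherent. This scheme-theoretic closure property amounts to combining closure of $n$-coherent objects under finite coproducts with stability of $n$-coherence under the Čech-type quotient encoding the cover, and requires an internal induction running parallel to the outer one. Once this stability is in hand, the inductive machinery assembles into coherence of $\operatorname{Shv}(\mathcal{C})$ when $\mathcal{C}$ has a terminal object, and the two reductions above recover (1) and (2).
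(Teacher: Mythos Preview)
The paper does not supply its own proof of this theorem: it is stated purely as a citation of \cite[Proposition A.3.1.3]{SAG} and is used as a black box (e.g.\ in the proof of Theorem~\ref{pi-finite-coh}). There is therefore nothing in the paper to compare your argument against.

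That said, your outline is essentially the strategy Lurie employs in \cite{SAG}: reduce to slices via $\operatorname{Shv}(\mathcal{C})/j(C)\simeq\operatorname{Shv}(\mathcal{C}/C)$, and run an induction on the coherence level using that representables generate and that $j$ preserves finite limits. The one place to be careful is exactly the step you flag as hardest. In Lurie's treatment the induction is packaged slightly differently: rather than directly proving that \emph{all} $n$-coherent objects are closed under products by covering an arbitrary $n$-coherent $U$ with representables and descending, he works with the class of representables as a distinguished generating family closed under fiber products, and invokes a general lemma (essentially \cite[Proposition A.2.1.3 and Corollary A.2.1.5]{SAG}) which says that a topos with such a generating family of $n$-coherent objects is automatically locally $n$-coherent with $n$-coherent objects stable under pullback. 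This sidesteps the ad hoc descent you describe. Your version is not wrong, but making the descent step precise would effectively force you to reprove that lemma inside your induction; it is cleaner to isolate it once and for all.
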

We will now prove that coherent objects within a coherent topos possess coherent homotopy groups.
\begin{theorem}
\label{cohom}
Let $X$ be a coherent object in a coherent topos $\mathfrak{X}$ then:
\begin{enumerate}
    \item The truncation $\tau^{\leq 0} X$ is coherent in the discrete topos $\text{Disc}(\mathfrak{X})$.
    \item For every point $x_0$ of $X$, the homotopy group $\pi_j(X, x_0)$ is coherent.
\end{enumerate}
\end{theorem}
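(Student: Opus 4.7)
The plan is to deduce both statements from two closure properties of coherent objects in a coherent $\infty$-topos, both recorded in \cite[\S A.2]{SAG}: coherent objects in $\mathfrak{X}$ are stable under finite limits, and stable under the truncation functors $\tau^{\leq n}$.

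For part (1), I would first cite the preservation of coherence under truncation to conclude that $\tau^{\leq 0} X$ is coherent as an object of $\mathfrak{X}$. The remaining issue is passing from coherence in $\mathfrak{X}$ to coherence in the $1$-topos $\text{Disc}(\mathfrak{X})$, and this is a matter of unpacking the definitions: covering families in $\text{Disc}(\mathfrak{X})$ form a subclass of covering families in $\mathfrak{X}$, so quasi-compactness is inherited, and the higher $n$-coherence clauses (for $n \geq 2$) reduce to closure of coherent $0$-truncated objects under finite products, which is again inherited from $\mathfrak{X}$.

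For part (2), I would identify $\pi_j(X, x_0)$ with a truncation of a finite limit. Since the inverse image functor $x_0^*$ of the étale geometric morphism is left exact and commutes with $\tau^{\leq 0}$, pulling back the definition $\pi_j(X) = \tau^{\leq 0}(X^{S^j} \to X)$ along the point $x_0$ gives
\[
\pi_j(X, x_0) \simeq \tau^{\leq 0}\bigl(\Omega^j(X, x_0)\bigr),
\]
where $\Omega^j(X, x_0)$ is the $j$-fold iterated based loop space, presented as an iterated pullback of copies of $*$ over $X$. The terminal object is coherent because $\mathfrak{X}$ is coherent, $X$ is coherent by hypothesis, and closure of coherent objects under finite limits therefore gives coherence of $\Omega^j(X, x_0)$. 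Applying part (1) to this object delivers coherence of $\pi_j(X, x_0)$.

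The main obstacle is pinning down the precise citations in \cite{SAG} for the two closure properties. Closure under finite limits is essentially built into the inductive definition (the $(n+1)$-coherence clause requires closure of $n$-coherent objects under finite products, and pullbacks reduce to finite products and equalizers via slicing), but preservation of coherence under truncation is subtler: it requires compatibility of $\tau^{\leq n}$ with the covering-sieve structure, and ultimately rests on local coherence of $\mathfrak{X}$. Once both closures are in hand, the theorem is essentially formal.
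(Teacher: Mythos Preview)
Your proposal is correct and rests on the same two closure facts the paper uses: preservation of coherence under truncation (\cite[Corollary~A.2.4.4]{SAG}) and closure of coherent objects under pullbacks in a coherent topos (\cite[Remark~A.2.1.8]{SAG}). The assembly for part~(2), however, is genuinely different. The paper first invokes the Postnikov-layer machinery (Corollary~\ref{pulltrunc}) to exhibit $K(\pi_j(X,x_0),j)$ as the fiber of $\tau^{\leq j}X \to \tau^{\leq j-1}X$ over $x_0$, concludes this Eilenberg--MacLane object is coherent, and then loops it down $j$ times to reach $\pi_j(X,x_0)$. You instead loop $X$ directly, writing $\Omega^j(X,x_0)$ as an iterated pullback of the terminal object over $X$, and only then apply $\tau^{\leq 0}$. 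Your route is shorter and more self-contained: it uses nothing beyond the definition $\pi_j(X,x_0)\simeq\tau^{\leq 0}\Omega^j(X,x_0)$ and avoids the entire development of Lemma~\ref{truncngerb} and Corollary~\ref{pulltrunc}. The paper's detour does buy something, namely coherence of the Eilenberg--MacLane object $K(\pi_j(X,x_0),j)$ itself, which foreshadows its role in Section~4, but that is not needed for the statement at hand. You are also more careful than the paper about part~(1): the paper silently conflates coherence in $\mathfrak{X}$ with coherence in $\mathrm{Disc}(\mathfrak{X})$, whereas you correctly flag and dispatch this passage.
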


\begin{proof}

By {\cite[Corollary A.2.4.4]{SAG}}, we know that the truncation $\tau^{\leq n} X$ is coherent for all $n$. Now, consider the pullback diagram from \ref{pulltrunc}:

% https://q.uiver.app/#q=WzAsNCxbMCwwLCJLKFxccGlfe24rMX0oWCwgeF8wKSwgbisxKSJdLFsxLDAsIlxcdGF1XntcXGxlcSBuKzF9WCJdLFsxLDEsIlxcdGF1XntcXGxlcSBufVgiXSxbMCwxLCIqIl0sWzEsMl0sWzMsMiwieF8wIl0sWzAsM10sWzAsMV1d
\[\begin{tikzcd}
	{K(\pi_{n+1}(X, x_0), n+1)} & {\tau^{\leq n+1}X} \\
	{*} & {\tau^{\leq n}X}
	\arrow[from=1-1, to=1-2]
	\arrow[from=1-1, to=2-1]
	\arrow[from=1-2, to=2-2]
	\arrow["{x_0}", from=2-1, to=2-2]
\end{tikzcd}\]

Since coherent objects in a coherent topos are closed under pullbacks (as stated in {\cite[Remark A.2.1.8]{SAG}}), and the point is coherent since $\mathfrak{X}$ is coherent we conclude that $K(\pi_{n+1}(X, x_0), n+1)$ is coherent. 

Now, applying \ref{loop_EM} we see that  $\pi_j(X, x_0)$ is given by iterated loop spaces of Eilenberg-MacLane objects, i.e., $\Omega^{n+1} K(\pi_{n+1}(X, x_0), n+1) \simeq \pi_j(X, x_0)$ therefore $\pi_j(X, x_0)$ is obtained through iterated pullbacks of coherent objects. Each pullback is performed along the point which is coherent (since the topos is coherent), thus from {\cite[Remark A.2.1.8]{SAG}} we conclude that $\pi_j(X, x_0)$ is coherent.
\end{proof}

\subsubsection{Bounded And Localic Topoi}
In this section, we will define the notions of an \( n \)-localic topos and of a bounded topos. These are the fundamental concepts used to define the \textbf{solidification} of a topos, which is essential for the \textbf{derived Stone embedding}.

We begin by defining an \( n \)-topos, which can be thought of as a topos in which all objects are \( (n-1) \)-truncated.
\begin{definition}[{{\cite[Definition 6.4.1.1.]{SAG}}}]
For any non-negative integer \( n \), an \(\infty\)-category \( \mathcal{E} \) is called an \( n \)-\emph{topos} if there exists a small \(\infty\)-category \( \mathcal{A} \) and an accessible left-exact localization functor
\[
L: \operatorname{PShv}_{\leq n-1}(\mathcal{A}) \rightarrow \mathcal{E},
\]
where \( \operatorname{PShv}_{\leq n-1}(\mathcal{A}) \) denotes the full subcategory of the presheaf category \( \operatorname{PShv}(\mathcal{A}) \) consisting of \((n-1)\)-truncated presheaves.
\end{definition}
Let \(\mathcal{X}\) and \(\mathcal{Y}\) be \(\infty\)-topoi. The notation \(\text{Fun}_*(\mathcal{Y}, \mathcal{X})\) denotes the \(\infty\)-category of geometric morphisms from \(\mathcal{Y}\) to \(\mathcal{X}\).
\begin{definition}[{\cite[Definition 6.4.5.8]{HTT}}]
let \(\mathfrak{X}\) be an \(n\)-topos ($n$ can be $\infty$). We define \(\mathfrak{X}\) to be \(k\)-localic if, for any \(n\)-topos \(\mathfrak{Y}\), the canonical map
\[
\text{Fun}_*(\mathfrak{Y}, \mathfrak{X}) \rightarrow \text{Fun}_*(\tau_{\leq k-1} \mathfrak{Y}, \tau_{\leq k-1} \mathfrak{X})
\]
is an equivalence.
\end{definition}
\begin{proposition}
A topos is \(k\)-localic if and only if it is equivalent to the category of sheaves \(\text{Sh}(\mathcal{C})\), where \(\mathcal{C}\) is a small \(k\)-category with finite limits.

\end{proposition}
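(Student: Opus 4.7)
The plan is to prove the two implications separately, using the universal mapping property of sheaf $\infty$-topoi for one direction and the definitional presentation of $k$-topoi as left-exact localizations of truncated presheaves for the other.

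For the $(\Leftarrow)$ direction, suppose $\mathfrak{X} \simeq \mathrm{Sh}(\mathcal{C})$ with $\mathcal{C}$ a small $k$-category with finite limits (equipped with some Grothendieck topology). I would invoke the universal property: for any $\infty$-topos $\mathfrak{Y}$, the space $\mathrm{Fun}_*(\mathfrak{Y}, \mathrm{Sh}(\mathcal{C}))$ is equivalent to the space of left-exact, cover-preserving functors $\mathcal{C} \to \mathfrak{Y}$. Because $\mathcal{C}$ is a $k$-category, its mapping spaces are $(k-1)$-truncated, so any such functor factors uniquely through the inclusion $\tau_{\leq k-1}\mathfrak{Y} \hookrightarrow \mathfrak{Y}$. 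Applying the same argument to $\tau_{\leq k-1}\mathrm{Sh}(\mathcal{C})$, which can be identified with the $k$-topos of sheaves of $(k-1)$-truncated spaces on $\mathcal{C}$, yields the same functor category. The canonical restriction map
\[
\mathrm{Fun}_*(\mathfrak{Y}, \mathrm{Sh}(\mathcal{C})) \to \mathrm{Fun}_*(\tau_{\leq k-1}\mathfrak{Y}, \tau_{\leq k-1}\mathrm{Sh}(\mathcal{C}))
\]
is then an equivalence, so $\mathrm{Sh}(\mathcal{C})$ is $k$-localic.

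For the $(\Rightarrow)$ direction, assume $\mathfrak{X}$ is $k$-localic, and set $\mathfrak{Y} := \tau_{\leq k-1}\mathfrak{X}$, which is a $k$-topos. By the very definition of a $k$-topos, $\mathfrak{Y}$ admits an accessible left-exact localization presentation $L: \mathrm{PShv}_{\leq k-1}(\mathcal{A}) \to \mathfrak{Y}$ for some small $\infty$-category $\mathcal{A}$. I would then extract a small full subcategory $\mathcal{C} \subset \mathfrak{Y}$ containing $L(\mathcal{A})$ and closed under finite limits; since $\mathfrak{Y}$ is a $k$-topos its mapping spaces are $(k-1)$-truncated, so $\mathcal{C}$ is automatically a $k$-category. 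Equipping $\mathcal{C}$ with the Grothendieck topology generated by effective epimorphisms in $\mathfrak{Y}$ produces a site whose sheaf $k$-topos recovers $\mathfrak{Y}$. Now $\mathrm{Sh}(\mathcal{C})$ is $k$-localic by the $(\Leftarrow)$ direction and $\mathfrak{X}$ is $k$-localic by assumption; since they share a common $(k-1)$-truncation $\mathfrak{Y}$, the $k$-localic property forces $\mathrm{Fun}_*(-, \mathrm{Sh}(\mathcal{C})) \simeq \mathrm{Fun}_*(-, \mathfrak{X})$ as presheaves on the $\infty$-category of $\infty$-topoi, and Yoneda delivers the desired equivalence $\mathrm{Sh}(\mathcal{C}) \simeq \mathfrak{X}$.

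The main obstacle will be the careful construction of the site $\mathcal{C}$ in the second direction: one must simultaneously ensure that $\mathcal{C}$ is small, closed under finite limits inside $\mathfrak{Y}$, generates $\mathfrak{Y}$ under colimits, and that the natural Grothendieck topology on it reproduces the localization $L$. A secondary subtle point is the compatibility of sheafification with truncation used implicitly in the first direction, namely the identification $\tau_{\leq k-1}\mathrm{Sh}(\mathcal{C}) \simeq \mathrm{Sh}_{\leq k-1}(\mathcal{C})$; this relies on the fact that truncation commutes with left-exact localizations, which is standard in the HTT framework and should be cited rather than reproved.
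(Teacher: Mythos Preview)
The paper does not actually prove this proposition; it simply writes ``See the proof of \cite[Proposition 6.4.5.9.]{HTT}.'' Your proposal is therefore not competing with an argument in the paper but rather sketching what the cited HTT proof does, and your outline is broadly faithful to Lurie's strategy there.

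That said, one step in your $(\Leftarrow)$ direction is not justified as stated. You write that because $\mathcal{C}$ is a $k$-category, any left-exact cover-preserving functor $\mathcal{C}\to\mathfrak{Y}$ automatically factors through $\tau_{\leq k-1}\mathfrak{Y}$. This is false in general: a functor out of a $k$-category can land on arbitrarily highly connected objects (e.g.\ a functor $\ast\to\mathfrak{Y}$ is just an object). The correct argument is that the functors arising from the universal property are of the form $f^{*}\circ j$ with $j:\mathcal{C}\to\mathrm{Sh}(\mathcal{C})$ the sheafified Yoneda embedding and $f^{*}$ the left adjoint of a geometric morphism; since $\mathcal{C}$ is a $k$-category the objects $j(c)$ are $(k-1)$-truncated in $\mathrm{Sh}(\mathcal{C})$, and $f^{*}$, being left exact, preserves $(k-1)$-truncated objects. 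With that correction the rest of your $(\Leftarrow)$ argument goes through.

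In the $(\Rightarrow)$ direction your plan is sound, and you have correctly identified the genuine work: producing a small $\mathcal{C}\subset\tau_{\leq k-1}\mathfrak{X}$ closed under finite limits whose induced topology recovers the $k$-topos. Your concluding Yoneda step is fine once you note that the chain of equivalences
\[
\mathrm{Fun}_*(\mathfrak{Y},\mathfrak{X})\;\simeq\;\mathrm{Fun}_*(\tau_{\leq k-1}\mathfrak{Y},\tau_{\leq k-1}\mathfrak{X})\;\simeq\;\mathrm{Fun}_*(\tau_{\leq k-1}\mathfrak{Y},\tau_{\leq k-1}\mathrm{Sh}(\mathcal{C}))\;\simeq\;\mathrm{Fun}_*(\mathfrak{Y},\mathrm{Sh}(\mathcal{C}))
\]
is natural in $\mathfrak{Y}$; this is exactly how HTT packages the argument via the adjunction of \cite[Proposition~6.4.5.7]{HTT}.
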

\begin{proof}
See the proof of \cite[Proposition 6.4.5.9.]{HTT}.
\end{proof}
Now, we present the example of the category of spaces. The category of spaces is equivalently the category of sheaves over the point, which is 0-truncated, and therefore the category of spaces is \(0\)-localic.

\begin{example}[{\cite[Example A.2.1.7.]{SAG}}]
\label{S}
The category $\mathcal{S}$ is coherent and $0$-localic, and its coherent objects are the $\pi$-finite spaces.
\end{example}
In the following definition, we will define the notion of a bounded topos. However, since this text deals only with \(n\)-localic toposes, which are already bounded, the reader can safely interpret "bounded topos" as referring to \(n\)-localic topoi for simplicity.
\begin{definition}
A bounded topos is one that can be expressed as a small inverse limit \(\varprojlim \mathfrak{X}_\alpha\), where each \(\mathfrak{X}_\alpha\) is \(n\)-localic for some \(n\), with \(n\) possibly varying for different \(\alpha\).
\end{definition}
\begin{remark}
For example, an \(n\)-localic topos is always bounded.
\end{remark}
\begin{theorem}[{\cite[Proposition 3.3.8]{pyknotic}}]
\label{procov}
Let $n \geq 1$ be an integer, and let $\mathbf{X}$ be an $n$-localic coherent topos. Then every object $X \in \mathrm{Pro}(\mathbf{X}_{<\infty}^{\mathrm{coh}})$ admits an effective epimorphism $Y \twoheadrightarrow X$, where $Y \in \mathrm{Pro}(\mathbf{X}_{\leq n-1}^{\mathrm{coh}})$.
\end{theorem}

\begin{proof}
By \cite[Theorem A.7.5.3]{SAG}, there is an equivalence
\[
\mathbf{X}\simeq Sh_{eff}(\mathbf{X}_{\leq n-1}^{\mathrm{coh}}).
\]
Thus, every coherent object $X \in \mathbf{X}^{\mathrm{coh}}$ admits an effective epimorphism of the form
\[
\coprod_{i} U_i \twoheadrightarrow X,
\]
with each $U_i \in \mathbf{X}_{\leq n-1}^{\mathrm{coh}}$. Since $X$ is coherent and the category of $(n-1)$-coherent objects is closed under finite coproducts, it follows that this effective epimorphism can be taken from a single object in $\mathbf{X}_{\leq n-1}^{\mathrm{coh}}$.

The extension to pro-objects follows by induction. For further details, see the proof given explicitly in \cite{pyknotic}.
\end{proof}

\vspace*{10mm}

\section{Pyknotic Spaces}
\subsection{Introduction}
Topological spaces exhibit both combinatorial aspects---captured by homotopy types---and analytic aspects that homotopy types alone cannot fully describe. However, the category of topological spaces lacks desirable features such as internal Hom objects and robust integration with algebraic structures. To address these challenges, two complementary frameworks have been introduced: \textbf{homotopy types} for capturing the combinatorial aspects and \textbf{pyknotic sets}(see \cite{pyknotic}) (or, very similarly, \textbf{condensed sets} -- see \cite{condensed}) for the analytic aspects.

Pyknotic sets form a \textbf{Grothendieck topos}, essentially defined as sheaves over the category of profinite sets, with minor adjustments due to set-theoretic concerns. By changing the target category of the sheaves, one can preserve desirable properties if the target category is sufficiently structured. For instance, pyknotic abelian groups form an abelian category, unlike the category of topological abelian groups, which is not abelian other examples includes pyknotic spaces, pyknotic rings, etc. 
This makes pyknotization a suitable framework for encoding analytic data while ensuring compatibility with algebraic and categorical structures.

\subsection{Basic Concepts}
According to {\cite[Proposition A.3.2.1]{SAG}}, one can define a finitary Grothendieck topology on an $\infty$-category $\mathcal{C}$ by specifying that a collection of morphisms $\{U_i \to U\}$ constitutes a covering family if and only if there exists a finite subcollection for which the induced map
\[
\coprod U_i \twoheadrightarrow U
\]
is an effective epimorphism. We refer to this topology as the \emph{effective topology} on $\mathcal{C}$.

\begin{definition}[{\cite[2.2.1]{pyknotic}}]
Consider the following sites:
\[
\mathbf{EStn} \subseteq \mathbf{Stn} \subseteq \mathbf{Comp},
\]
defined as:

\begin{itemize}
    \item $\mathbf{Comp}$: the category of \emph{compact Hausdorff spaces} that are tiny \footnote{That is, $\delta_0$-small, where $\delta_0$ denotes the smallest inaccessible cardinal.}.
    \item $\mathbf{Stn}$: the full subcategory of $\mathbf{Comp}$ consisting of \emph{Stone spaces} namely compact Hausdorff totally disconnected spaces.
    \item $\mathbf{EStn}$: the full subcategory of $\mathbf{Stn}$ consisting of \emph{Stonean spaces}, namely compact Hausdorff extremally disconnected spaces.
\end{itemize}
Each of these categories is equipped with the effective topology.
\end{definition}

It is shown in \cite[2.2.1]{pyknotic} that the restriction functors induce equivalences between the corresponding categories of sheaves:
\[
\mathbf{Sh}_{\mathrm{eff}}(\mathbf{Comp}; \mathbf{Set}) \simeq \mathbf{Sh}_{\mathrm{eff}}(\mathbf{Stn}; \mathbf{Set}) \simeq \mathbf{Sh}_{\mathrm{eff}}(\mathbf{EStn}; \mathbf{Set}).
\]

We now proceed to define the notions of pyknotic sets and pyknotic spaces.

\begin{definition}[Pyknotic Sets, {\cite[Definition 2.13]{pyknotic}}]
The category of \emph{pyknotic sets}, denoted $\mathbf{Pyk}(\mathbf{Set})$, is defined as the category of sheaves of sets on $\mathbf{Comp}$ with respect to the effective topology:
\[
\mathbf{Pyk}(\mathbf{Set}) := \mathbf{Sh}_{\mathrm{eff}}(\mathbf{Comp}; \mathbf{Set}).
\]
\end{definition}

\begin{definition}[Pyknotic Spaces, {\cite[2.2.3]{pyknotic}}]
The category of \emph{pyknotic spaces}, denoted $\mathbf{Pyk}(\mathcal{S})$, is the category of hypercomplete sheaves \footnote{That is sheaves which satisfy descent for hypercoverings see {{\cite[Definition 6.5.3.2.]{HTT}}} and {{\cite[Corollary 6.5.3.13]{HTT}}} } of spaces on $\mathbf{Comp}$ with respect to the effective topology:
\[
\mathbf{Pyk}(\mathcal{S}) := \mathbf{Sh}_{\mathrm{eff}}^{\mathrm{hyp}}(\mathbf{Comp}; \mathcal{S}).
\]
\end{definition}

\begin{remark}[{\cite[Construction 2.2.12]{pyknotic}}]
The \emph{global sections functor} $\Gamma : \mathbf{Pyk}(\mathcal{S}) \to \mathcal{S}$ is given by evaluating a pyknotic space at the point. For a pyknotic space $X$, we call $\Gamma(X)$ its \emph{underlying space}.
This functor has a left adjoint, the \emph{constant sheaf functor} (as was defined in \ref{link}) which in the context of  $\mathbf{Pyk}(\mathcal{S})$ is denoted  $()^{\operatorname{disc}}: \mathcal{S} \to \mathbf{Pyk}(\mathcal{S})$.
The global sections functor $\Gamma$ also admits a right adjoint in the case of $\mathbf{Pyk}(\mathcal{S})$. 
Namely the indiscrete functor $()^{\operatorname{indisc}} : \mathcal{S} \to \mathbf{Pyk}(\mathcal{S})$
\end{remark}

\begin{remark}
\label{hompre}
From the adjunctions described above, we obtain a pair of geometric morphisms between $\infty$-topoi: the global sections functor $\Gamma : \mathbf{Pyk}(\mathcal{S}) \to \mathcal{S}$ and the indiscrete functor $()^{\operatorname{indisc}} : \mathcal{S} \to \mathbf{Pyk}(\mathcal{S})$. According to {\cite[Remark 6.5.1.4]{HTT}}, the functor $\Gamma$ preserves homotopy groups because it corresponds to the pullback along the geometric morphism $\operatorname{indisc}$. Similarly, the discrete functor $()^{\mathrm{disc}}$, being the pullback along $\Gamma$, also preserves homotopy groups.
\end{remark}

\subsection{Solidification and the Derived Stone Embedding}

In this section, we delve into the concept of solidifying an $\infty$-topos and explore the derived Stone embedding.
Let $\mathcal{C}$ be a small $\infty$-category. The opposite Yoneda embedding $\mathcal{C} \hookrightarrow \operatorname{Fun}(\mathcal{C}, \mathcal{S})^{\mathrm{op}}$ allows us to view objects of $\mathcal{C}$ as representable functor under the opposite Yoneda embedding, we define $\operatorname{Pro}^{\delta_0}(\mathcal{C})$ as the full subcategory of $\operatorname{Fun}(\mathcal{C}, \mathcal{S})^{\mathrm{op}}$ consisting of functors expressible as $\delta_0$-small cofiltred limits of these representable functors (recall that $\delta_0$ denotes the smallest inaccessible cardinal.).

\begin{definition}[Solidification of an $\infty$-topos;  {\cite[Construction 3.3.2]{pyknotic}}]
Let $\mathfrak{X}$ be a bounded $\infty$-topos. Denote by $\mathfrak{X}^{\mathrm{coh}}_{<\infty}$ the full subcategory of truncated coherent objects in $\mathfrak{X}$. The \emph{solidification} of $\mathfrak{X}$ is defined as the category of hypercomplete sheaves on $\operatorname{Pro}^{\delta_0}(\mathfrak{X}^{\mathrm{coh}}_{<\infty})$ with respect to the effective epimorphism topology:
\[
\mathfrak{X}^{\dagger} := \mathbf{Sh}^{\mathrm{hyp}}_{\mathrm{eff}}\big( \operatorname{Pro}^{\delta_0}(\mathfrak{X}^{\mathrm{coh}}_{<\infty}) \big).
\]
\end{definition}

The following theorem shows that when working with an $n$-localic $\infty$-topos, it suffices to consider only the $(n-1)$-truncated coherent objects for the solidification process.

\begin{theorem}[{\cite[Proposition 3.3.9]{pyknotic}}]
Let $n \geq 1$ be an integer, and let $\mathfrak{X}$ be an $n$-localic coherent $\infty$-topos. Then the restriction of presheaves induces an equivalence:
\[
\mathfrak{X}^{\dagger} \simeq \mathbf{Sh}^{\mathrm{hyp}}_{\mathrm{eff}}\big( \operatorname{Pro}(\mathfrak{X}^{\mathrm{coh}}_{\leq n-1}) \big),
\]
with inverse given by right Kan extension.
\end{theorem}
\begin{proof}
Consider the inclusion functor:
\[
 \operatorname{Pro}(\mathfrak{X}^{\mathrm{coh}}_{\leq n-1}) \xhookrightarrow{i} \operatorname{Pro}(\mathfrak{X}^{\mathrm{coh}}_{\leq \infty}) 
\]
This inclusion induces a restriction functor:
\[
\mathfrak{X}^{\dagger} \rightarrow \mathbf{Sh}^{\mathrm{hyp}}_{\mathrm{eff}}\big( \operatorname{Pro}(\mathfrak{X}^{\mathrm{coh}}_{\leq n-1}) \big),
\]
We first check that $i_*$ is fully faithful. This follows directly from Proposition \ref{procov} and the combination of \cite[Proposition 20.4.5.1 and Remark 20.4.5.2]{SAG}.
To establish that $i_*$ is an equivalence, it suffices to show it admits a fully faithful right adjoint $i^!$, as this implies the unit and counit of the adjunction are natural isomorphisms. Define $i^!$ pointwise as the right Kan extension along $i$. Since $i$ is fully faithful, it follows that $i^!$ is also fully faithful.
Finally, we must verify that $i^!$ indeed lands in hypercomplete sheaves. This is a technical verification, fully detailed in \cite[Proposition 3.3.9]{pyknotic}.
\end{proof}
\begin{example}
The $\infty$-category of spaces $\mathcal{S}$ is $1$-localic (see Example~\ref{S}). Therefore, by the theorem above, the solidification of $\mathcal{S}$ is given by:
\[
\mathcal{S}^{\dagger} \simeq \mathbf{Sh}^{\mathrm{hyp}}_{\mathrm{eff}}\big( \operatorname{Pro}(\mathbf{Fin}) \big),
\]
where $\mathbf{Fin}$ denotes the category of finite sets.
\end{example}

Building upon this foundation, \cite{pyknotic} establishes a derived version of Stone duality.

\begin{corollary}[Derived Stone Duality; cf. {\cite[Example 3.3.10]{pyknotic}}]
\label{derivedstone}
There is an equivalence of $\infty$-categories:
\[
\mathbf{Pyk}(\mathcal{S}) \simeq \mathbf{Sh}^{\mathrm{hyp}}_{\mathrm{eff}}\big( \operatorname{Pro}(S_{\pi}) \big),
\]
where $\mathbf{Pyk}(\mathcal{S})$ denotes the category of pyknotic spaces, and $\operatorname{Pro}(S_{\pi})$ is the pro-category of $\pi$-finite spaces.

Moreover, the Yoneda embedding
\[
\operatorname{Pro}(S_{\pi}) \hookrightarrow \operatorname{Fun}\big( \operatorname{Pro}(S_{\pi})^{\mathrm{op}}, \mathcal{S} \big)
\]
lands inside the category of hypercomplete sheaves, inducing a fully faithful functor:
\[
\operatorname{Pro}(S_{\pi}) \hookrightarrow \mathbf{Sh}^{\mathrm{hyp}}_{\mathrm{eff}}\big( \operatorname{Pro}(S_{\pi}) \big),
\]
which embeds the pro-category of $\pi$-finite spaces into $\mathbf{Pyk}(\mathcal{S})$.
\end{corollary}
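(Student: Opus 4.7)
The plan is to establish the main equivalence by identifying both sides with the solidification $\mathcal{S}^{\dagger}$ of the $\infty$-topos of spaces, using the previous example that computes $\mathcal{S}^{\dagger}$ via finite sets, and then to argue the embedding statement separately by combining the Yoneda lemma with sheafness and hypercompleteness of representable presheaves on $\operatorname{Pro}(S_\pi)$.

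First, I would unwind the definition of $\mathcal{S}^{\dagger}$ in two different ways. On one side, by Example \ref{S} the coherent objects of $\mathcal{S}$ are exactly the $\pi$-finite spaces and every $\pi$-finite space is truncated, so directly from the definition of solidification
\[
\mathcal{S}^{\dagger} = \mathbf{Sh}^{\mathrm{hyp}}_{\mathrm{eff}}\bigl( \operatorname{Pro}^{\delta_0}(\mathcal{S}^{\mathrm{coh}}_{<\infty}) \bigr) = \mathbf{Sh}^{\mathrm{hyp}}_{\mathrm{eff}}(\operatorname{Pro}(S_\pi)),
\]
which identifies the right-hand side of the claimed equivalence with $\mathcal{S}^{\dagger}$. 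On the other side, the preceding example already computes $\mathcal{S}^{\dagger} \simeq \mathbf{Sh}^{\mathrm{hyp}}_{\mathrm{eff}}(\operatorname{Pro}(\mathbf{Fin}))$ using the $1$-localic structure of $\mathcal{S}$. Classical Stone duality identifies the $1$-category $\operatorname{Pro}(\mathbf{Fin})$ with $\mathbf{Stn}$, compatibly with effective covers, giving $\mathbf{Sh}^{\mathrm{hyp}}_{\mathrm{eff}}(\operatorname{Pro}(\mathbf{Fin})) \simeq \mathbf{Sh}^{\mathrm{hyp}}_{\mathrm{eff}}(\mathbf{Stn})$. Finally, the restriction-of-sheaves equivalence recorded after the definition of the three sites $\mathbf{EStn} \subseteq \mathbf{Stn} \subseteq \mathbf{Comp}$ extends from sheaves of sets to hypercomplete sheaves of spaces (a Čech-nerve and sifted-colimit argument reduces the $\infty$-categorical case to the $1$-categorical one), so $\mathbf{Sh}^{\mathrm{hyp}}_{\mathrm{eff}}(\mathbf{Stn}) \simeq \mathbf{Sh}^{\mathrm{hyp}}_{\mathrm{eff}}(\mathbf{Comp}) = \mathbf{Pyk}(\mathcal{S})$. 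Chaining these equivalences yields $\mathbf{Pyk}(\mathcal{S}) \simeq \mathcal{S}^{\dagger} \simeq \mathbf{Sh}^{\mathrm{hyp}}_{\mathrm{eff}}(\operatorname{Pro}(S_\pi))$.

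For the \emph{moreover} statement, full faithfulness of the Yoneda embedding $\operatorname{Pro}(S_\pi) \hookrightarrow \operatorname{Fun}(\operatorname{Pro}(S_\pi)^{\mathrm{op}}, \mathcal{S})$ is immediate from the Yoneda lemma; what must be verified is that every representable presheaf $\operatorname{Map}_{\operatorname{Pro}(S_\pi)}(-, X)$ is already a hypercomplete sheaf for the effective topology, so that the embedding factors through $\mathbf{Pyk}(\mathcal{S})$ without further sheafification. Sheafness for the effective topology I would obtain as an instance of descent for representable functors: an effective epimorphism in $\operatorname{Pro}(S_\pi)$ is the colimit of its Čech nerve inside the ambient $\infty$-topos $\mathcal{S}^{\dagger}$, and a representable turns this colimit into the required limit in $\mathcal{S}$. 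For hypercompleteness, I would note that every object $X \in \operatorname{Pro}(S_\pi)$ is a cofiltered limit of $\pi$-finite objects, for which the representable presheaf is truncated and hence hypercomplete by general facts about locally coherent topoi (Theorem \ref{A.3.1.3}); the general case then follows since the inclusion of hypercomplete sheaves is closed under limits.

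The main technical obstacle I anticipate is precisely the transfer of the $1$-categorical restriction equivalence between $\mathbf{Sh}_{\mathrm{eff}}(\mathbf{Comp};\mathbf{Set})$ and $\mathbf{Sh}_{\mathrm{eff}}(\mathbf{Stn};\mathbf{Set})$ to the hypercomplete $\infty$-categorical setting, and the careful identification of the internal effective-epimorphism topology on $\operatorname{Pro}(S_\pi)$ (inherited from its embedding into $\mathcal{S}^{\dagger}$) with the Grothendieck topology used to form $\mathbf{Sh}^{\mathrm{hyp}}_{\mathrm{eff}}(\operatorname{Pro}(S_\pi))$. These compatibilities are what make the proof more than a formal chain of definitions and are essentially the content of the comparison theorems of \cite{pyknotic}; in a fully written proof this is where I would spend the most effort.
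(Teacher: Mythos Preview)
Your proposal is correct and follows essentially the same route as the paper: both compute the solidification $\mathcal{S}^{\dagger}$ in two ways (once via $\operatorname{Pro}(S_\pi)$ from the definition, once via $\operatorname{Pro}(\mathbf{Fin})\simeq\mathbf{Stn}$ to reach $\mathbf{Pyk}(\mathcal{S})$), and both handle the embedding by observing that representables are sheaves, that $\pi$-finite representables are truncated hence hypercomplete, and that hypercompleteness is preserved under limits. The only cosmetic difference is that the paper dispatches sheafness in one word (``subcanonical'') where you unfold it as a \v{C}ech-descent argument, and you are more explicit about the $\mathbf{Stn}/\mathbf{Comp}$ transfer step that the paper simply absorbs into the citation of \cite{pyknotic}.
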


\begin{proof}
From the previous example, we observe that:
\[
\mathcal{S}^{\mathrm{coh}}_{\leq 0} \simeq \mathbf{Fin}.
\]
Applying the theorem, we obtain:
\[
\mathcal{S}^{\dagger} \simeq \mathbf{Sh}^{\mathrm{hyp}}_{\mathrm{eff}}\big( \operatorname{Pro}(\mathbf{Fin}) \big) \simeq \mathbf{Pyk}(\mathcal{S}).
\]
This establishes the desired equivalence.

Next, note that the effective epimorphism topology on $\operatorname{Pro}(S_{\pi})$ is subcanonical, implying that representable presheaves are indeed sheaves. Since $\pi$-finite spaces are truncated, their images under the Yoneda embedding are truncated and hence hypercomplete.

Finally, because limits of hypercomplete sheaves remain hypercomplete, the pro-objects (being limits of $\pi$-finite spaces) are also hypercomplete. Therefore, the Yoneda embedding induces a fully faithful functor into $\mathbf{Pyk}(\mathcal{S})$.
\end{proof}

\begin{remark}
With this embedding of $\operatorname{Pro}(S_{\pi})$ into $\mathbf{Pyk}(\mathcal{S})$, we will henceforth identify pro-$\pi$-finite spaces with their images in $\mathbf{Pyk}(\mathcal{S})$ without further distinction.
\end{remark}

\begin{remark}
Consider the discrete sheaf functor $\operatorname{disc}: \mathcal{S} \to \mathbf{Pyk}(\mathcal{S})$, which assigns to each space its associated discrete pyknotic space. This functor is fully faithful. The embedding of $\operatorname{Pro}(S_{\pi})$ into $\mathbf{Pyk}(\mathcal{S})$ can be described as the composition:
\[
\operatorname{Pro}(\mathcal{S}) \xrightarrow{\operatorname{Pro}(\operatorname{disc})} \operatorname{Pro}\big( \mathbf{Pyk}(\mathcal{S}) \big) \xrightarrow{\varprojlim} \mathbf{Pyk}(\mathcal{S}),
\]
where $\varprojlim$ denotes the limit in the category of pyknotic spaces.

Therefore, a pro-object $\{X_i\}$ in $\operatorname{Pro}(\mathcal{S})$ can be identified with the limit $\varprojlim \operatorname{disc}(X_i) \in \mathbf{Pyk}(\mathcal{S})$.
\end{remark}
\subsection{Coherent Objects in \texorpdfstring{\ensuremath{\mathbf{Pyk}(\mathcal{S})}}{Pyk(S)}}

We now aim to understand the coherent objects in \(\mathbf{Pyk}(\mathcal{S})\) and \(\mathbf{Pyk}(\text{Set})\). 
\begin{theorem}
\label{pi-finite-coh}
Every pro $\pi$-finite space is a coherent object in the topos $\textbf{Pyk}(\mathcal{S})$.
\end{theorem}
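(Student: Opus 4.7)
The plan is to deduce the result from Theorem \ref{A.3.1.3} (Proposition A.3.1.3 of \cite{SAG}) applied to the site obtained by endowing $\operatorname{Pro}(S_\pi)$ with the effective-epimorphism topology, and then to transport the conclusion across the hypercompletion to land in $\mathbf{Pyk}(\mathcal{S})$.

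First I would verify the three hypotheses of Theorem \ref{A.3.1.3} for the site $(\operatorname{Pro}(S_\pi), \tau_{\mathrm{eff}})$. Since $S_\pi$ admits finite limits, its pro-category $\operatorname{Pro}(S_\pi)$ inherits pullbacks; the effective-epimorphism topology is finitary by construction; and the one-point $\pi$-finite space, regarded as a constant pro-system, is a terminal object. Part (1) of Theorem \ref{A.3.1.3} then asserts that the sheafified Yoneda embedding $\operatorname{Pro}(S_\pi) \hookrightarrow \operatorname{Shv}_{\mathrm{eff}}(\operatorname{Pro}(S_\pi))$ lands in coherent objects, while part (3) moreover gives that the ambient sheaf topos is itself coherent.

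Second, I would transfer the conclusion across hypercompletion. By the derived Stone duality of Corollary \ref{derivedstone}, $\mathbf{Pyk}(\mathcal{S}) \simeq \mathbf{Sh}^{\mathrm{hyp}}_{\mathrm{eff}}(\operatorname{Pro}(S_\pi))$, and by the remark immediately following that corollary each pro-$\pi$-finite space is already hypercomplete; hence the same candidate representable object sits in both sheaf topoi. To conclude coherence on the hypercomplete side, I would argue that an effective epimorphism onto a hypercomplete $X$ in $\mathbf{Pyk}(\mathcal{S})$ remains an effective epimorphism in $\operatorname{Shv}_{\mathrm{eff}}(\operatorname{Pro}(S_\pi))$, so a finite subcover supplied by coherence in the non-hypercomplete topos descends along the localization; closure under finite products is inherited directly because the embedding of $\operatorname{Pro}(S_\pi)$ into the hypercomplete topos preserves finite limits. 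Iterating this inductively over the $n$-coherent hierarchy yields full coherence of $X$.

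The step I expect to be the main obstacle is precisely this final transfer, because $n$-coherence is defined inductively through slice topoi and slicing does not a priori commute with the hypercompletion localization. The key input that makes the argument work is that the representables in question are themselves hypercomplete, so that $\mathbf{Pyk}(\mathcal{S})/X$ may be identified with the hypercompletion of $\operatorname{Shv}_{\mathrm{eff}}(\operatorname{Pro}(S_\pi))/X$; with this identification in hand, each inductive step reduces to verifying that quasi-compactness and closure under finite products of coherent objects survive the hypercomplete localization, which is tractable because covers and products are detected by the unlocalized data.
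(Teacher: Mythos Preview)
Your proposal follows essentially the same approach as the paper: apply Theorem \ref{A.3.1.3} to the finitary site $\operatorname{Pro}(S_\pi)$ to obtain coherence of the representables in $\operatorname{Shv}_{\mathrm{eff}}(\operatorname{Pro}(S_\pi))$, and then transfer this to the hypercomplete topos $\mathbf{Pyk}(\mathcal{S})\simeq\mathbf{Sh}^{\mathrm{hyp}}_{\mathrm{eff}}(\operatorname{Pro}(S_\pi))$. For the second step---which you rightly flag as the main obstacle---the paper bypasses your hand-rolled inductive argument and simply invokes \cite[Proposition A.2.2.2]{SAG}, which states precisely that an object of $\operatorname{Shv}^{\mathrm{hyp}}(\mathcal{C})$ that is coherent in $\operatorname{Shv}(\mathcal{C})$ remains coherent in the hypercompletion.
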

\begin{proof}
By {\cite[Proposition A.2.2.2]{SAG}}, if $X \in \mathbf{Sh}^{\text{hyp}}_{\text{eff}}(\text{Pro}(S_\pi))$ is coherent as an object of $\mathbf{Sh}_{\text{eff}}(\text{Pro}(S_\pi))$, then it remains coherent as an object of $\mathbf{Sh}^{\text{hyp}}_{\text{eff}}(\text{Pro}(S_\pi)) \hookrightarrow \mathbf{Sh}_{\text{eff}}(\text{Pro}(S_\pi))$.

Since the topology on $\text{Pro}(S_\pi)$ is finitary, \ref{A.3.1.3} ensures that the Yoneda embedding lands in coherent objects.
\end{proof}
Next, we recall the complete characterization of the coherent objects in \(\mathbf{Pyk}(\text{Set})\)

\begin{lemma}[{{\cite[2.1.4]{pyknotic}}} or {{\cite[Theorem 2.16 (i)]{condensed2}}}]
Compact Hausdorff spaces are precisely the coherent objects of $\textbf{Pyk}(\textbf{Set})$.
\end{lemma}
\begin{proof}
We have the equivalence:
\[
\textbf{Pyk}(\textbf{Set}) \simeq \mathbf{Sh}_{\text{eff}}(\textbf{Comp}),
\]
where $\textbf{Comp}$ denotes the category of compact Hausdorff spaces. It is well-known that $\textbf{Comp}$ forms a pretopos. 

By {\cite[Proposition C.6.4]{Conceptual}}, the coherent objects of the sheaf category $\mathbf{Sh}_{\text{eff}}(\textbf{Comp})$ correspond to the sheafification of the Yoneda embedding applied to the objects of $\textbf{Comp}$. Therefore, the coherent objects of $\textbf{Pyk}(\textbf{Set})$, denoted $\textbf{Pyk}(\textbf{Set})^{\text{coh}}$, are exactly $\mathcal{Y}(\textbf{Comp})$, where $\mathcal{Y}$ denotes the sheafification of the Yoneda embedding.
\end{proof}
With this in mind, we can now prove that coherent objects are closed under cofiltered limits. This result is not immediately obvious, as coherent objects are generally only closed under finite limits. For instance, in \(\mathbf{Set}\), the coherent objects are the finite sets, which are clearly not closed under cofiltered limits.

\begin{corollary}
\label{cohlim}
Coherent objects in $\textbf{Pyk}(\textbf{Set})$ are closed under cofiltered limits.
\end{corollary}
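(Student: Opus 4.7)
The plan is to reduce the statement, via the preceding lemma, to the classical fact that compact Hausdorff spaces are closed under cofiltered limits, and then verify that the (sheafified) Yoneda embedding $\mathcal{Y}\colon \mathbf{Comp} \hookrightarrow \mathbf{Pyk}(\mathbf{Set})$ preserves such limits. Concretely, given a cofiltered diagram $\{C_i\}$ of coherent objects in $\mathbf{Pyk}(\mathbf{Set})$, the previous lemma identifies each $C_i$ with $\mathcal{Y}(K_i)$ for some compact Hausdorff space $K_i$, and I would like to identify $\varprojlim C_i$ with $\mathcal{Y}(\varprojlim K_i)$, where the latter limit is taken in $\mathbf{Comp}$ and is again compact Hausdorff by Tychonoff.

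First I would recall that cofiltered limits of compact Hausdorff spaces, computed in topological spaces, exist and remain compact Hausdorff: the product $\prod K_i$ is compact Hausdorff, and the limit is carved out as a closed (hence compact) Hausdorff subspace. This gives us an object $K := \varprojlim K_i \in \mathbf{Comp}$ whose image $\mathcal{Y}(K)$ is, by the previous lemma, a coherent object of $\mathbf{Pyk}(\mathbf{Set})$.

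The core step is to check that $\mathcal{Y}(K)$ really is the limit of $\mathcal{Y}(K_i)$ in $\mathbf{Pyk}(\mathbf{Set})$. For this I would use two facts: the inclusion $\mathbf{Sh}_{\mathrm{eff}}(\mathbf{Comp};\mathbf{Set}) \hookrightarrow \mathrm{PSh}(\mathbf{Comp};\mathbf{Set})$ is a right adjoint to sheafification, hence preserves (and creates) all limits, so limits of sheaves are computed pointwise; and the Yoneda embedding into presheaves preserves all limits that exist in the source. Evaluating at any $T \in \mathbf{Comp}$ gives
\[
(\varprojlim \mathcal{Y}(K_i))(T) \simeq \varprojlim \mathrm{Hom}_{\mathbf{Comp}}(T, K_i) \simeq \mathrm{Hom}_{\mathbf{Comp}}(T, \varprojlim K_i) \simeq \mathcal{Y}(K)(T),
\]
naturally in $T$, so $\varprojlim \mathcal{Y}(K_i) \simeq \mathcal{Y}(K)$ as sheaves.

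The only subtle point I anticipate is the small set-theoretic check that the diagram $\{K_i\}$ still lies within the tiny compact Hausdorff spaces used to build $\mathbf{Pyk}(\mathbf{Set})$, so that $\varprojlim K_i$ is again an object of $\mathbf{Comp}$; this is handled by the $\delta_0$-smallness conventions fixed in the definition of the site. Once this is noted, combining the three bullets above immediately yields that $\varprojlim C_i \simeq \mathcal{Y}(K)$ is coherent, proving the corollary.
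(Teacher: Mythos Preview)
Your proposal is correct and follows essentially the same route as the paper: reduce via the previous lemma to the Tychonoff argument that $\mathbf{Comp}$ is closed under cofiltered limits, and then check that the Yoneda embedding carries this limit into $\mathbf{Pyk}(\mathbf{Set})$. The only cosmetic difference is in that last step: the paper invokes the fact (from \cite[Example~2.1.6]{pyknotic}) that the functor $\mathbf{Tspc}\to\mathbf{Pyk}(\mathbf{Set})$ is a right adjoint whose restriction to $\mathbf{Comp}$ is $\mathcal{Y}$, whereas you argue directly via Yoneda's limit-preservation and the pointwise computation of limits of sheaves---which tacitly uses that the effective topology on $\mathbf{Comp}$ is subcanonical, a point you may want to make explicit.
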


\begin{proof}
The coherent objects in $\textbf{Pyk}(\textbf{Set})$ are precisely the compact Hausdorff spaces. According to {\cite[Example 2.1.6]{pyknotic}}, there exists a functor from the category of tiny topological spaces $\textbf{Tspc}$ to $\textbf{Pyk(Set)}$, which is a right adjoint and hence preserves limits. Its restriction to $\textbf{Comp}$ is given by the sheafified Yoneda embedding. Thus, we reduce the problem to showing that $\textbf{Comp}$ is closed under cofiltered limits in $\textbf{Tspc}$.

Let $\{ X_i \}$ be a cofiltered diagram in $\textbf{Comp}$. The inverse limit $\varprojlim X_i$ can be identified as a closed subset of the product $\prod X_i$. By Tychonoff’s theorem, the product $\prod X_i$ is compact Hausdorff. Since the inverse limit is a closed subset of a compact Hausdorff space, it is itself a compact Hausdorff space. Consequently, $\varprojlim X_i$ is compact Hausdorff. Therefore, the category $\textbf{Comp}$ is closed under cofiltered limits in $\textbf{Tspc}$.
\end{proof}

\begin{corollary}
\label{consev}
The functor 
\[
(*) : \textbf{Pyk}(\textbf{Set}) \rightarrow \textbf{Set}
\]
is conservative on coherent objects.
\end{corollary}

\begin{proof}
When restricted to the coherent objects, which are precisely the compact Hausdorff spaces, the functor $(*): \textbf{Pyk}(\textbf{Set}) \rightarrow \textbf{Set}$ reduces to the classical forgetful functor from the category of compact Hausdorff spaces to the category of sets.

It is a standard result that this forgetful functor is conservative - a continuous bijection between compact Hausdorff spaces is necessarily a homeomorphism (i.e., an isomorphism in the category of compact Hausdorff spaces). 

Thus, $$(*): \textbf{Pyk}(\textbf{Set}) \rightarrow \textbf{Set}$$ is conservative on coherent objects.
\end{proof}
Finally, we are now ready to understand the homotopy groups of pro-\(\pi\)-finite spaces in \(\mathbf{Pyk}(\mathcal{S})\).
To avoid any ambiguity we will make explicit the discrete embedding in the two following propositions
\begin{proposition}
Let \(\{X_i\}\) be a pro-system of \(\pi\)-finite spaces. Denote by \(p_i: \varprojlim X_i \rightarrow X_i\) the projection maps. Then, we have the following isomorphism:

$$
\pi_j(\varprojlim X_i^{\text{disc}}, x_0)(*) = \varprojlim \pi_j(X_i^{\text{disc}}, p_i(x_0))(*)
$$
\end{proposition}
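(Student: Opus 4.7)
The plan is to apply the global-sections functor $\Gamma = (-)(*) \colon \mathbf{Pyk}(\mathcal{S}) \to \mathcal{S}$ to both sides and reduce the identity to a classical assertion about cofiltered limits of $\pi$-finite spaces inside $\mathcal{S}$. Three features of $\Gamma$ will be used: it is simultaneously the left adjoint to $(-)^{\operatorname{indisc}}$ and the right adjoint to $(-)^{\operatorname{disc}}$, so it preserves both limits and colimits; it preserves homotopy groups by Remark~\ref{hompre}; and the unit of $(-)^{\operatorname{disc}} \dashv \Gamma$ is an equivalence, i.e.\ $\Gamma \circ (-)^{\operatorname{disc}} \simeq \operatorname{id}_{\mathcal{S}}$, since $(-)^{\operatorname{disc}}$ is fully faithful.

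Chaining these three properties, the left-hand side unfolds as
\[
\pi_j(\varprojlim X_i^{\operatorname{disc}}, x_0)(*) \;=\; \Gamma\,\pi_j(\varprojlim X_i^{\operatorname{disc}}, x_0) \;=\; \pi_j\bigl(\Gamma \varprojlim X_i^{\operatorname{disc}},\, \Gamma(x_0)\bigr) \;=\; \pi_j(\varprojlim X_i,\, x_0),
\]
with the final limit now taken in $\mathcal{S}$. The same sequence of identifications rewrites the right-hand side as $\varprojlim \pi_j(X_i, p_i(x_0))$, a cofiltered limit of finite sets in $\mathcal{S}$. The proposition thus reduces to the purely space-level statement that $\pi_j$ commutes with cofiltered limits of $\pi$-finite spaces:
\[
\pi_j(\varprojlim X_i,\, x_0) \;\simeq\; \varprojlim \pi_j(X_i,\, p_i(x_0)).
\]

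This remaining reduction is the main substance of the argument. For a tower of $\pi$-finite spaces it follows from Milnor's short exact sequence
\[
0 \to {\varprojlim}^{1}\, \pi_{j+1}(X_i, p_i(x_0)) \to \pi_j(\varprojlim X_i, x_0) \to \varprojlim \pi_j(X_i, p_i(x_0)) \to 0,
\]
combined with the Mittag--Leffler condition, which is automatic for any inverse system of finite groups since the descending chain of images of the transition maps consists of finite sets and must stabilize. For an arbitrary cofiltered indexing category, one may pass to a cofinal tower or argue analogously that the higher derived inverse limits of systems of finite groups vanish; either way the correction term drops out and the middle map of the sequence becomes the desired isomorphism.
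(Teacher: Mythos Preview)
Your reduction via $\Gamma$ is exactly the paper's argument: use Remark~\ref{hompre} to pass homotopy groups through $\Gamma$, use that $\Gamma$ preserves limits, and use $\Gamma \circ (-)^{\operatorname{disc}} \simeq \operatorname{id}$, arriving at the space-level identity $\pi_j(\varprojlim X_i, x_0) \simeq \varprojlim \pi_j(X_i, p_i(x_0))$; at that point the paper simply cites \cite[Remark~3.2.8]{DAG-XIII} and is done.

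Where you diverge is in justifying that space-level identity directly, and there the argument has a soft spot. Milnor's sequence together with Mittag--Leffler is fine for sequential towers, but the phrase ``one may pass to a cofinal tower'' is not generally available: a cofiltered category admits a cofinal $\omega^{\mathrm{op}}$-chain only when it has countable cofinality, whereas the pro-systems in this paper are merely $\delta_0$-small. Your fallback, that higher derived limits of cofiltered systems of finite groups vanish, is the correct statement, but it is not an ``analogous'' consequence of the tower case and requires its own argument (for uncountable indexing categories even the implication Mittag--Leffler $\Rightarrow \lim^1=0$ is not automatic). You should either supply that argument or, as the paper does, cite \cite[Remark~3.2.8]{DAG-XIII}.
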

\begin{proof}

From remark \ref{hompre}
$$\pi_j(\varprojlim X_i^{\text{disc}}, x_0)(*)\simeq \pi_j(\varprojlim (X_i^{\text{disc}} (*)), x_0)\simeq \pi_j(\varprojlim X_i, x_0)$$
  Now applying {\cite[Remark 3.2.8]{DAG-XIII}} we get:
  $$\pi_j(\varprojlim X_i, x_0) \simeq  
\varprojlim \pi_j(X_i, p_i(x_0))$$
Now using again remark \ref{hompre} we get that
$$\pi_j(X_i, p_i(x_0))\simeq \pi_j(X_i^{\text{disc}}, p_i(x_0))$$
\end{proof}
\begin{corollary}
\label{limhom}
Let \(\{X_i\}\) be a pro-system of \(\pi\)-finite spaces. Denote by \(p_i: \varprojlim X_i \rightarrow X_i\) the projection maps. Then, we have the following isomorphism:
\[
\pi_j(\varprojlim X_i^{\text{disc}}, x_0) =\varprojlim \pi_j(X_i^{\text{disc}}, p_i(x_0))
\]
\end{corollary}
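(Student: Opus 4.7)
The plan is to upgrade the underlying-space identity of the preceding proposition to a sheaf-level equivalence in $\mathbf{Pyk}(\mathcal{S})$ by invoking the conservativity of the global sections functor on coherent objects (Corollary~\ref{consev}).

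First, I would assemble the natural comparison map. The projections $p_i$ induce compatible maps $\pi_j(\varprojlim X_i^{\text{disc}}, x_0) \to \pi_j(X_i^{\text{disc}}, p_i(x_0))$ in $\mathbf{Pyk}(\mathcal{S})$, which by the universal property of the limit combine into a single morphism
$$\varphi: \pi_j\bigl(\varprojlim X_i^{\text{disc}}, x_0\bigr) \longrightarrow \varprojlim \pi_j\bigl(X_i^{\text{disc}}, p_i(x_0)\bigr).$$
The problem then reduces to showing $\varphi$ is an equivalence.

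Second, I would verify that both the source and the target of $\varphi$ are coherent objects of $\mathbf{Pyk}(\mathcal{S})$. By Theorem~\ref{pi-finite-coh}, the limit $\varprojlim X_i^{\text{disc}}$ is coherent, so Theorem~\ref{cohom} yields coherence of its homotopy group, the source of $\varphi$. For the target, each $X_i^{\text{disc}}$ is a $\pi$-finite space, hence coherent, so each $\pi_j(X_i^{\text{disc}}, p_i(x_0))$ is a coherent discrete object by Theorem~\ref{cohom}; these discrete objects live in $\mathbf{Pyk}(\mathbf{Set})$, where Corollary~\ref{cohlim} guarantees closure of coherent objects under cofiltered limits.

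Finally, the preceding proposition shows that $\Gamma(\varphi)$ is an equivalence in $\mathcal{S}$. Since both source and target of $\varphi$ are coherent in $\mathbf{Pyk}(\mathcal{S})$, Corollary~\ref{consev} forces $\varphi$ itself to be an equivalence, establishing the claim. The most delicate point is the coherence of the cofiltered limit appearing in the target: Corollary~\ref{cohlim} is phrased in $\mathbf{Pyk}(\mathbf{Set})$, so one must reconcile this with coherence in $\mathbf{Pyk}(\mathcal{S})$, which is harmless for discrete objects because the slice by a discrete object yields compatible notions of quasi-compactness in the two topoi.
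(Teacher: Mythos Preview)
Your proposal is correct and follows essentially the same route as the paper's proof: both establish coherence of the source via Theorem~\ref{pi-finite-coh} and Theorem~\ref{cohom}, coherence of the target via Corollary~\ref{cohlim}, and then invoke the conservativity result Corollary~\ref{consev} together with the preceding proposition. Your closing remark about reconciling coherence in $\mathbf{Pyk}(\mathbf{Set})$ with coherence in $\mathbf{Pyk}(\mathcal{S})$ for discrete objects is a subtlety the paper leaves implicit, so your version is in fact slightly more careful on that point.
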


\begin{proof}
We know that $\varprojlim X_i^{\text{disc}}$ is coherent, as it is a pro-$\pi$-finite space - \ref{pi-finite-coh}. 

Thus, by Theorem \ref{cohom}, $\pi_j(\varprojlim X_i^{\text{disc}}, x_0)$ is coherent. Similarly, $\varprojlim \pi_j(X_i^{\text{disc}}, p_i(x_0))$ is coherent as an inverse limit of coherent objects in $\textbf{Pyk}(Set)$, Corollary \ref{cohlim}.

From Corollary \ref{consev}, we know that the functor $(*): \textbf{Pyk}(Set) \rightarrow \textbf{Set}$ is conservative on coherent objects. Therefore, it suffices to check that the natural map:
\[
\pi_j(\varprojlim X_i^{\text{disc}}, x_0)(*) \to \varprojlim \pi_j(X_i^{\text{disc}}, p_i(x_0))(*)
\]
is an isomorphism. This is exactly the proposition above.
\end{proof}

\section{Characterization Of The Essential Image}

\subsection{Preparatory Propositions}
We will prove some closure properties that we will need to use in the induction process in the proof of the main theorem
\begin{lemma}
\label{pifiniteq}
The full subcategory $S_\pi\hookrightarrow \textbf{Pyk}(\mathcal{S})$ is closed under geometric realizations of groupoid objects.  
\end{lemma}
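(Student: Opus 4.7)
The plan is to factor the embedding through $\mathcal{S}$, reduce the statement to the analogous closure property inside $\mathcal{S}$, and then verify the latter by a fiber sequence argument.

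First, I would recall that by the remark following Corollary~\ref{derivedstone}, the embedding $S_\pi \hookrightarrow \mathbf{Pyk}(\mathcal{S})$ factors through the discrete sheaf functor as $S_\pi \hookrightarrow \mathcal{S} \xrightarrow{(-)^{\operatorname{disc}}} \mathbf{Pyk}(\mathcal{S})$. The functor $(-)^{\operatorname{disc}} = \Gamma^{*}$ is fully faithful, left exact (being the inverse image of a geometric morphism), and preserves all colimits (being a left adjoint to $\Gamma$). Given a groupoid object $X_\bullet$ in $\mathbf{Pyk}(\mathcal{S})$ whose vertices lie in $S_\pi$, full faithfulness lets me lift the simplicial data to a simplicial object $Y_\bullet$ in $\mathcal{S}$ with $Y_\bullet^{\operatorname{disc}} \simeq X_\bullet$; left exactness together with full faithfulness then reflects each Segal/groupoid identification from $\mathbf{Pyk}(\mathcal{S})$ back to $\mathcal{S}$, making $Y_\bullet$ a groupoid object valued in $S_\pi$. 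Because $(-)^{\operatorname{disc}}$ preserves colimits, $|X_\bullet| \simeq |Y_\bullet|^{\operatorname{disc}}$ inside $\mathbf{Pyk}(\mathcal{S})$, so it suffices to show $|Y_\bullet| \in S_\pi$ in $\mathcal{S}$.

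Second, I would verify that $S_\pi \subset \mathcal{S}$ is closed under geometric realizations of groupoid objects directly. Set $Y := |Y_\bullet|$. The canonical map $Y_0 \to Y$ is an effective epimorphism whose \v{C}ech nerve is $Y_\bullet$, so in particular $Y_1 \simeq Y_0 \times_Y Y_0$. For any basepoint $y_0 \in Y_0$ with image $y \in Y$, the fiber $F := \operatorname{fib}_y(Y_0 \to Y)$ is equivalent to the fiber of $d_1 : Y_1 \to Y_0$ over $y_0$, and therefore $F \in S_\pi$ since $S_\pi$ is closed under finite limits in $\mathcal{S}$. On connected components, $Y_0 \to Y$ is surjective, so $\pi_0(Y)$ is finite. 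Applying the long exact sequence associated to $F \to Y_0 \to Y$ then sandwiches each $\pi_n(Y)$ between a quotient of $\pi_n(Y_0)$ and a piece of $\pi_{n-1}(F)$, both finite, and forces $\pi_n(Y) = 0$ once $n$ exceeds the top nontrivial Postnikov degrees of $F$ and $Y_0$.

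The main obstacle will be the first step: carefully confirming that both the simplicial structure of $X_\bullet$ and all the finite-limit identities (the Segal and groupoid conditions) can be descended along $(-)^{\operatorname{disc}}$. This works precisely because $(-)^{\operatorname{disc}}$ is simultaneously fully faithful, colimit-preserving, and left exact — a combination that allows us to transport both the diagram and all required finite limits between the two $\infty$-topoi without loss. The low-degree bookkeeping in the long exact sequence (the $\pi_1$-action on $\pi_0(F)$ when $n = 1$) is standard but worth a brief remark.
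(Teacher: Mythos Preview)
Your reduction step coincides with the paper's: factor the embedding as $S_\pi \hookrightarrow \mathcal{S} \xrightarrow{(-)^{\operatorname{disc}}} \mathbf{Pyk}(\mathcal{S})$, use full faithfulness and left exactness of $(-)^{\operatorname{disc}}$ to descend the groupoid object to $\mathcal{S}$, and use colimit preservation to identify the two realizations. Where you diverge is in verifying that $S_\pi \subset \mathcal{S}$ is closed under realizations of groupoid objects. The paper invokes a general structural fact---from the proof of \cite[Corollary~A.6.1.7]{SAG}, coherent objects in any coherent $\infty$-topos are closed under such realizations and the inclusion preserves them---and then specializes via Example~\ref{S} (the coherent objects of $\mathcal{S}$ are exactly $S_\pi$). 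You instead argue directly with the fiber sequence $F \to Y_0 \to Y$. Your route is more elementary and self-contained; the paper's route buys a statement valid in any coherent topos at the cost of a black-box citation.

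One small correction to your fiber-sequence step: the truncation bound is off by one. From the long exact sequence you only get $\pi_n(Y) \hookrightarrow \pi_{n-1}(F)$ for $n > N$ (with $N$ the maximal nontrivial degree of $F$ and $Y_0$), so $\pi_{N+1}(Y)$ can survive---already visible for the nerve of a finite group, where $Y_0 = *$ and $F = G$ are $0$-truncated but the realization $BG$ is $1$-truncated. Replacing ``exceeds $N$'' by ``exceeds $N+1$'' fixes this, and the conclusion $Y \in S_\pi$ stands.
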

\begin{proof}
From the proof of \cite[Corollary A.6.1.7]{SAG}, we know that coherent objects in a topos are closed under geometric realizations of groupoid objects, and the embedding of coherent objects into the topos preserves geometric realizations of groupoid objects.
Since coherent spaces are the \(\pi\)-finite spaces, it follows that \(S_\pi\subset \mathcal{S}\) is closed under geometric realizations of groupoid objects. Furthermore, the embedding \(S_\pi \hookrightarrow \textbf{Pyk}(\mathcal{S})\) factors as \(S_\pi \hookrightarrow \mathcal{S} \hookrightarrow \textbf{Pyk}(\mathcal{S})\). Thus, it remains to verify that the embedding \(\mathcal{S} \to \textbf{Pyk}(\mathcal{S})\) preserves geometric realizations. This is immediate, as the embedding preserves all colimits.
\end{proof}
We will use the notion of group action in an $\infty$-topos as in {\cite[Definition 3.1.]{bundles}}, for an object $X$ with an action of a group object $G$, the quotient is denoted \( X \mathbin{/\mkern-6mu/} G \).
\begin{corollary}
Let \( G \) be a finite group and \( X \) a \(\pi\)-finite space equipped with a \( G \)-action. Then the quotient \( X \mathbin{/\mkern-6mu/} G \), taken in \(\textbf{Pyk}(\mathcal{S})\), is also a \(\pi\)-finite space.
\end{corollary}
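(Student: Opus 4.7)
The plan is to exhibit $X \mathbin{/\mkern-6mu/} G$ as the geometric realization of a groupoid object whose individual terms lie in $S_\pi$, and then invoke Lemma \ref{pifiniteq} to conclude that the realization itself is $\pi$-finite.

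Concretely, by the very definition of the quotient recalled earlier in the paper, $X \mathbin{/\mkern-6mu/} G$ is the geometric realization (in $\textbf{Pyk}(\mathcal{S})$) of the action groupoid $(X \mathbin{/\mkern-6mu/} G)_\bullet$, whose $n$-th term is $X \times G^{\times n}$. The first step I would carry out is to verify that each of these terms lies in $S_\pi$. Since $G$ is a finite group, viewed as a discrete space it is $\pi$-finite; since $X$ is by hypothesis $\pi$-finite; and since $S_\pi \subset \mathcal{S}$ is closed under finite products (the coherent objects of any coherent $\infty$-topos are closed under finite limits, and $\mathcal{S}^{\mathrm{coh}} = S_\pi$ by Example \ref{S}), each $X \times G^{\times n}$ is a $\pi$-finite space.

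The second step is to observe that the groupoid object $(X \mathbin{/\mkern-6mu/} G)_\bullet$ is therefore a simplicial diagram in the full subcategory $S_\pi \hookrightarrow \textbf{Pyk}(\mathcal{S})$. Applying Lemma \ref{pifiniteq}, which states that this inclusion is closed under geometric realizations of groupoid objects, we may form the geometric realization inside $S_\pi$ and conclude that $X \mathbin{/\mkern-6mu/} G \in S_\pi$, as desired.

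There is essentially no obstacle here: the result is immediate once Lemma \ref{pifiniteq} is in hand, provided one is careful to note that the terms of the action groupoid are finite products of $\pi$-finite spaces, and that the embedding $S_\pi \hookrightarrow \textbf{Pyk}(\mathcal{S})$ preserves the relevant colimit. The only mild subtlety worth flagging is that the geometric realization must be interpreted in $\textbf{Pyk}(\mathcal{S})$ (as in the statement), but Lemma \ref{pifiniteq} precisely ensures that the realization computed in $\mathcal{S}$ agrees with the one computed in $\textbf{Pyk}(\mathcal{S})$, since the embedding preserves all colimits.
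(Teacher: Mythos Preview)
Your proposal is correct and follows essentially the same argument as the paper: exhibit $X \mathbin{/\mkern-6mu/} G$ as the geometric realization of the action groupoid with levels $X \times G^{\times n}$, observe these are $\pi$-finite, and apply Lemma~\ref{pifiniteq}. If anything, your version is slightly more detailed in justifying closure of $S_\pi$ under finite products and in flagging where the realization is computed.
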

\begin{proof}
Remark that the quotient \( X \mathbin{/\mkern-6mu/} G \) can be viewed as the geometric realization of a groupoid object \( (X \mathbin{/\mkern-6mu/} G)_\bullet \) in the category $\operatorname{Grpd}(S_\pi)$ of groupoid objects in $\pi$-finite spaces. Specifically, this groupoid object is defined by:

\begin{itemize}
    \item  \( (X \mathbin{/\mkern-6mu/} G)_0 = X \),
    \item \( (X \mathbin{/\mkern-6mu/} G)_n = X \times G^n \),
\end{itemize}

with face and degeneracy maps encoding the action of $G$ on $X$. 

Since both $X$ and $G$ are $\pi$-finite, each level \( (X \mathbin{/\mkern-6mu/} G)_n \) of the simplicial object is $\pi$-finite. According to Lemma \ref{pifiniteq}, which states that the geometric realization of a groupoid object consisting of $\pi$-finite spaces is itself $\pi$-finite, it follows that the geometric realization \( \left| (X \mathbin{/\mkern-6mu/} G)_\bullet \right| \) is $\pi$-finite.
\end{proof}
\begin{proposition}
The category \( \text{Pro}(S_\pi) \) is closed under pullbacks in \( \textbf{Pyk}(\mathcal{S}) \).
\end{proposition}
\begin{proof}
Since limits commute with limits, we have the following isomorphism for the pullback of pro-objects in \( \textbf{Pyk}(\mathcal{S}) \):
\[
\varprojlim X_j \times_{\varprojlim Y_k} \varprojlim Z_i \simeq \varprojlim (X_j \times_{Y_k} Z_i).
\]
Now, recall that the embedding \( \mathcal{S} \xhookrightarrow{disc} \textbf{Pyk}(\mathcal{S}) \) preserves finite limits. Furthermore, \( S_\pi \) is closed under pullbacks in \( \mathcal{S} \). Therefore, since each \( X_j \times_{Y_k} Z_i \in S_\pi\subset \textbf{Pyk}(\mathcal{S}) \), the inverse limit belongs to \( \text{Pro}(S_\pi) \subset \textbf{Pyk}(\mathcal{S}) \). Thus, we conclude that \( \text{Pro}(S_\pi) \) is closed under pullbacks in \( \textbf{Pyk}(\mathcal{S}) \).
\end{proof}

\begin{lemma}[{\cite[Lemma 2.2.7]{contralimits}}]
\label{climit}
For any object \( X\) in an \( \infty \)-category \( \mathcal{C} \), the canonical forgetful functor \( \pi: \mathcal{C}_{/X} \to \mathcal{C} \) preserves and reflects limits indexed by weakly contractible diagrams.
\end{lemma}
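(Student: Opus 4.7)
The plan is to reduce the lemma to the elementary observation that a limit of a constant diagram over a weakly contractible indexing category equals the constant value. I will work at the level of mapping spaces, using the standard pullback description of mapping spaces in an over-category.

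For any $Y, A \in \mathcal{C}_{/X}$ with $Y$ having structure map $q \colon \pi Y \to X$, one has the natural pullback square
\[
\begin{tikzcd}
\operatorname{Map}_{\mathcal{C}_{/X}}(Y, A) \arrow[r] \arrow[d] & \operatorname{Map}_{\mathcal{C}}(\pi Y, \pi A) \arrow[d] \\
\{q\} \arrow[r] & \operatorname{Map}_{\mathcal{C}}(\pi Y, X).
\end{tikzcd}
\]
Given a diagram $F \colon I \to \mathcal{C}_{/X}$ and a candidate limit cone with apex $L$, I would form the limit over $I$ of these squares (with $A = F(i)$) and use that limits commute with pullbacks to obtain
\[
\lim_{I} \operatorname{Map}_{\mathcal{C}_{/X}}(Y, F) \simeq \left( \lim_{I} \operatorname{Map}_{\mathcal{C}}(\pi Y, \pi F) \right) \times_{\lim_{I} \operatorname{Map}_{\mathcal{C}}(\pi Y, X)} \lim_{I} \{q\}.
\]
Because the diagrams $i \mapsto \operatorname{Map}_{\mathcal{C}}(\pi Y, X)$ and $i \mapsto \{q\}$ are constant and $I$ is weakly contractible, their limits collapse to $\operatorname{Map}_{\mathcal{C}}(\pi Y, X)$ and $\{q\}$, respectively. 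Matching this against the pullback formula applied to $\operatorname{Map}_{\mathcal{C}_{/X}}(Y, L)$, the canonical comparison $\operatorname{Map}_{\mathcal{C}_{/X}}(Y, L) \to \lim_{I} \operatorname{Map}_{\mathcal{C}_{/X}}(Y, F)$ is an equivalence if and only if $\operatorname{Map}_{\mathcal{C}}(\pi Y, \pi L) \to \lim_{I} \operatorname{Map}_{\mathcal{C}}(\pi Y, \pi F)$ is an equivalence, since both arise as pullbacks of this latter map along the same base change. Ranging over $Y$ and applying the Yoneda lemma, this yields preservation and reflection simultaneously.

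The only technical care required is in setting up the mapping space pullback rigorously in the quasi-categorical model and in exchanging limits with pullbacks; no substantial obstacle is anticipated. An alternative, perhaps more conceptual route that I would keep in mind as a cross-check, is to describe limits in $\mathcal{C}_{/X}$ as limits over the extended diagram $I^{\triangleright} \to \mathcal{C}$ (with the cone point sent to $X$), then verify that the inclusion $I \hookrightarrow I^{\triangleright}$ is initial in Lurie's sense precisely when $I$ is weakly contractible (the only nontrivial slice to check is the one over the cone point, which is $I$ itself), and finally invoke the standard cofinality theorem to identify the two limits.
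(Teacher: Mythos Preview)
The paper does not supply its own proof of this lemma; it simply cites \cite[Lemma~2.2.7]{contralimits}. So there is nothing in the paper to compare against directly.

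Your argument is essentially correct, but one sentence is phrased more strongly than is warranted. For a \emph{fixed} $Y$ with structure map $q$, the comparison $\operatorname{Map}_{\mathcal{C}_{/X}}(Y,L)\to\lim_I\operatorname{Map}_{\mathcal{C}_{/X}}(Y,F)$ is the fibre over $q$ of the comparison $\operatorname{Map}_{\mathcal{C}}(\pi Y,\pi L)\to\lim_I\operatorname{Map}_{\mathcal{C}}(\pi Y,\pi F)$; a pointwise ``if and only if'' does not hold, since taking a single fibre is not conservative. What does hold is: if the base comparison is an equivalence then so is every fibre (this gives reflection immediately), and conversely if \emph{all} fibres over $\operatorname{Map}_{\mathcal{C}}(\pi Y,X)$ are equivalences then so is the total map (this gives preservation once you vary $q$ over all structure maps on a fixed underlying object; the case $\operatorname{Map}_{\mathcal{C}}(Z,X)=\varnothing$ is harmless since then both sides are empty, $I$ being nonempty). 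With that small rewording your mapping-space argument goes through.

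Your alternative route via the inclusion $I\hookrightarrow I^{\triangleright}$ being initial exactly when $I$ is weakly contractible is in fact the argument used in the cited reference, and is slightly cleaner: it avoids the fibrewise bookkeeping entirely and reduces everything to one cofinality check plus the standard description of limits in an overcategory as limits of the right-cone extension.
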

The category \(\text{GAction}(\mathfrak{X})\) refers to the category of objects equipped with a $G$-action as was defined in {\cite[Definition 3.1.]{bundles}}
\begin{lemma}
\label{qprelim}
Let \(\mathfrak{X}\) be an \(\infty\)-topos, and let \(G \in \text{Disc}(\mathfrak{X})\). Then the functor \(\text{GAction}(\mathfrak{X}) \xrightarrow{\mathbin{/\mkern-6mu/} G} \mathfrak{X}\) preserves weakly contractible limits, meaning that \((\varprojlim_{i\in I} P_i) \mathbin{/\mkern-6mu/} G \simeq \varprojlim_{i\in I} (P_i \mathbin{/\mkern-6mu/} G)\), for \(I\) a weakly contractible diagram.
\end{lemma}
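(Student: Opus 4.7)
The plan is to reduce the statement to Lemma \ref{climit} via the classification of $G$-actions established in Theorem \ref{G-action}. Recall that Theorem \ref{G-action} provides an equivalence $G\mathrm{Action}(\mathfrak{X}) \simeq \mathfrak{X}/\mathbf{B}G$ which sends a $G$-action $(P \mathbin{/\mkern-6mu/} G)_\bullet$ to the quotient map $P \mathbin{/\mkern-6mu/} G \to \mathbf{B}G$, viewed as an object in the slice. The underlying object of this slice object is precisely $P \mathbin{/\mkern-6mu/} G$, so under this equivalence the quotient functor $\mathbin{/\mkern-6mu/} G$ translates into the canonical forgetful functor $\pi: \mathfrak{X}/\mathbf{B}G \to \mathfrak{X}$.

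With this identification in place, the claim reduces to showing that $\pi$ preserves weakly contractible limits, which is exactly the content of Lemma \ref{climit} applied to $\mathcal{C} = \mathfrak{X}$ and $X = \mathbf{B}G$. So the bulk of the proof will consist of invoking Theorem \ref{G-action}, checking that the equivalence intertwines $\mathbin{/\mkern-6mu/} G$ with $\pi$, and then quoting Lemma \ref{climit}.

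The only mild subtlety is verifying that the equivalence of Theorem \ref{G-action} does indeed make this triangle commute, that is, that taking the geometric realization of the groupoid object $(P \mathbin{/\mkern-6mu/} G)_\bullet$ matches the source of the classifying map $P \mathbin{/\mkern-6mu/} G \to \mathbf{B}G$. This is essentially built into the construction of the equivalence, so I would state it as an immediate consequence rather than re-derive it. No serious obstacle is expected; the lemma is a quick corollary of the two cited results.
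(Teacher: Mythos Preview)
Your proposal is correct and follows exactly the paper's own proof: invoke the equivalence of Theorem~\ref{G-action} to identify $G\mathrm{Action}(\mathfrak{X})$ with $\mathfrak{X}/\mathbf{B}G$, under which $\mathbin{/\mkern-6mu/} G$ becomes the forgetful functor, and then apply Lemma~\ref{climit}. The paper's proof is terser but the content is identical.
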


\begin{proof}
Since the category \(\text{GAction}(\mathfrak{X})\) is equivalent to \(\mathfrak{X}/\mathbf{B}G\), the statement reduces to lemma \ref{climit}.
\end{proof}

 \begin{proposition}
Let \(\mathfrak{X}\) be an \(\infty\)-topos, and let \( * \xrightarrow{\zeta} X \in \mathfrak{X}_* \) be a connected pointed object. Then the fundamental group \(\pi_1(X, \zeta)\) acts on \(\tilde{X} := X \times_{\tau^{\leq 1} X} *\), and there is an equivalence \(\tilde{X} \mathbin{/\mkern-6mu/} \pi_1(X, \zeta) \simeq X\).
\end{proposition}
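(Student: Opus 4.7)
The plan is to identify the $1$-truncation $\tau^{\leq 1}X$ with the classifying space $\mathbf{B}\pi_1(X,\zeta)$, and then invoke Theorem~\ref{G-action}, which will simultaneously produce the $\pi_1(X,\zeta)$-action on $\tilde{X}$ and the quotient equivalence, so there is essentially nothing left to prove by hand.

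First I would argue that $\tau^{\leq 1}X$, pointed via $\tau^{\leq 1}\zeta$, is a pointed $1$-gerbe. Since $X$ is connected we have $\tau^{\leq 0}X \simeq *$; because truncations compose, $\tau^{\leq 0}(\tau^{\leq 1}X) \simeq \tau^{\leq 0}X \simeq *$, so $\tau^{\leq 1}X$ is connected (and it is manifestly $1$-truncated). The truncation map also preserves $\pi_1$, so $\pi_1(\tau^{\leq 1}X, \tau^{\leq 1}\zeta) \simeq \pi_1(X, \zeta)$. Applying Proposition~\ref{EM}(2), which identifies pointed $1$-gerbes with discrete group objects via $\pi_1$ with inverse $A \mapsto K(A,1) = \mathbf{B}A$, produces a canonical equivalence of pointed objects $\tau^{\leq 1}X \simeq \mathbf{B}\pi_1(X,\zeta)$. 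Composing with the truncation map supplies a morphism $X \to \mathbf{B}\pi_1(X,\zeta)$, viewed as an object of $\mathfrak{X}/\mathbf{B}\pi_1(X,\zeta)$.

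Next I would plug this object into Theorem~\ref{G-action} with $G = \pi_1(X,\zeta)$. Under the equivalence $\pi_1(X,\zeta)\mathrm{Action}(\mathfrak{X}) \simeq \mathfrak{X}/\mathbf{B}\pi_1(X,\zeta)$, the map $X \to \mathbf{B}\pi_1(X,\zeta)$ corresponds to a $\pi_1(X,\zeta)$-action whose underlying object is the fiber
\[
P \;:=\; X \times_{\mathbf{B}\pi_1(X,\zeta)} * \;\simeq\; X \times_{\tau^{\leq 1}X} * \;=\; \tilde{X},
\]
which is exactly the action on $\tilde{X}$ asserted by the proposition. The inverse equivalence from Theorem~\ref{G-action} sends a $G$-action $(P \mathbin{/\mkern-6mu/} G)_\bullet$ to its geometric realization $P \mathbin{/\mkern-6mu/} G$ over $\mathbf{B}G$; applied to the action just constructed, it returns the original object $X \to \mathbf{B}\pi_1(X,\zeta)$, which gives the desired equivalence $\tilde{X} \mathbin{/\mkern-6mu/} \pi_1(X,\zeta) \simeq X$ in $\mathfrak{X}$.

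The main obstacle is really only in the first step: checking that $\tau^{\leq 1}X$ is genuinely a pointed $1$-gerbe and that its classification under Proposition~\ref{EM}(2) produces precisely $\mathbf{B}\pi_1(X,\zeta)$. Both assertions amount to routine bookkeeping with the HTT truncation conventions — that $\tau^{\leq 1}$ preserves both connectedness and $\pi_1$ — but they form the bridge that must be crossed before Theorem~\ref{G-action} can finish the argument mechanically.
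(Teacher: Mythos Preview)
Your proposal is correct and follows essentially the same route as the paper: identify $\tau^{\leq 1}X$ with $\mathbf{B}\pi_1(X,\zeta)$ via Proposition~\ref{EM}, then apply Theorem~\ref{G-action} to the map $X \to \tau^{\leq 1}X$ to obtain both the action on the fiber $\tilde{X}$ and the quotient equivalence. The paper's proof is a terse two-line version of exactly this argument, while you have spelled out the verification that $\tau^{\leq 1}X$ is a pointed $1$-gerbe with the correct $\pi_1$.
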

\begin{proof}
Under the equivalence between \(\text{GAction}(\mathfrak{X})\) and \(\mathfrak{X}/\mathbf{B}G\) we see that the map \(X \rightarrow \tau^{\leq 1} X \simeq \mathbf{B}G\) induces an action of $G$ on $\tilde{X}$ with the quotient being $X$.  
\end{proof}

\subsection{Main Theorems}
We will now proceed to prove our main theorem. First, we establish a crucial lemma demonstrating that certain relative Eilenberg-MacLane spaces belong to \(\text{Pro}(S_\pi)\). These spaces serve as the bulding blocks for the inductive process used in Theorem \ref{oneside}.

\begin{lemma}
\label{EM-Pr}
Let $\Gamma$ be a $1$-gerb in $\textbf{Pyk}(\mathcal{S})$, let $\zeta: 1 \rightarrow \Gamma$ be a point of $\Gamma$, and let $ K^{/\Gamma}(G, n) $ be an Eilenberg--MacLane object in $\mathbf{Pyk}(\mathcal{S})/\Gamma$. Then, $K^{/\Gamma}(G, n) \in \operatorname{Pro}(S_\pi)$ (viewing $K^{/\Gamma}(G, n)$ as an object of $\mathbf{Pyk}(\mathcal{S})$ via the forgetful functor) if and only if both $\Gamma$ and \( G_0 := G \times_\Gamma \zeta \) lie in $\operatorname{Pro}(S_\pi)$.
\end{lemma}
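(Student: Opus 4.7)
The plan is to work with the pullback square from Corollary~\ref{EM-transfer},
\[
\begin{tikzcd}
K(G_0,n) \arrow[r] \arrow[d] & K^{/\Gamma}(G,n) \arrow[d] \\
1 \arrow[r,"\zeta"'] & \Gamma,
\end{tikzcd}
\]
combined with the identification $\Gamma \simeq B\pi$ for $\pi := \pi_1(\Gamma,\zeta)$ and Theorem~\ref{G-action}, which expresses $K^{/\Gamma}(G,n) \simeq K(G_0,n) \mathbin{/\mkern-6mu/} \pi$ as a homotopy quotient. The proof then amounts to analyzing when this quotient lies in $\operatorname{Pro}(S_\pi)$.

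For the forward direction, assume $K^{/\Gamma}(G,n) \in \operatorname{Pro}(S_\pi)$. By Corollary~\ref{limhom}, every pyknotic homotopy group of $K^{/\Gamma}(G,n)$ is profinite. For $n \ge 2$, the long exact sequence of the fiber sequence above immediately yields $\pi \simeq \pi_1(K^{/\Gamma}(G,n))$ and $G_0 \simeq \pi_n(K^{/\Gamma}(G,n))$, both profinite. For $n = 1$, $K^{/\Gamma}(G,1)$ is itself a $1$-gerbe $BH$ with $H$ profinite, and the induced surjection of pyknotic groups $H \twoheadrightarrow \pi$ with kernel $G_0$ forces $\pi$ and $G_0$ to be profinite via the pyknotic version of the classical fact that continuous images and closed subgroups of profinite groups are profinite. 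Writing $\pi = \varprojlim_j H_j$ as a cofiltered limit of finite groups then realizes $\Gamma \simeq B\pi \simeq \varprojlim_j BH_j$ in $\mathbf{Pyk}(\mathcal{S})$, placing $\Gamma \in \operatorname{Pro}(S_\pi)$.

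For the backward direction, assume $\Gamma, G_0 \in \operatorname{Pro}(S_\pi)$ and write $\pi = \varprojlim_j H_j$ and $G_0 = \varprojlim_i G_{0,i}$ as cofiltered limits of finite groups and $\pi$-invariant finite quotients, respectively. By continuity of the $\pi$-action on each finite $G_{0,i}$, the induced action on $K(G_{0,i},n)$ factors through some finite quotient $H_{j(i)}$. Since cofiltered diagrams are weakly contractible, Lemma~\ref{climit} lets one compute $K(G_0,n) \simeq \varprojlim_i K(G_{0,i},n)$ equally in $\mathbf{Pyk}(\mathcal{S})$ and in the category of $\pi$-actions, so Lemma~\ref{qprelim} yields
\[
K^{/\Gamma}(G,n) \simeq K(G_0,n) \mathbin{/\mkern-6mu/} \pi \simeq \varprojlim_i \bigl(K(G_{0,i},n) \mathbin{/\mkern-6mu/} \pi\bigr).
\]
Because the action factors through $H_{j(i)}$, one has $K(G_{0,i},n) \mathbin{/\mkern-6mu/} \pi \simeq \bigl(K(G_{0,i},n) \mathbin{/\mkern-6mu/} H_{j(i)}\bigr) \times_{BH_{j(i)}} \Gamma$: the first factor is $\pi$-finite by the preceding corollary on quotients of $\pi$-finite spaces by finite groups, $BH_{j(i)}$ is $\pi$-finite, and $\Gamma \in \operatorname{Pro}(S_\pi)$ by hypothesis; closure of $\operatorname{Pro}(S_\pi)$ under pullbacks and cofiltered limits in $\mathbf{Pyk}(\mathcal{S})$ then yields $K^{/\Gamma}(G,n) \in \operatorname{Pro}(S_\pi)$.

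The main obstacle I anticipate lies in the backward direction: producing a cofinal system of $\pi$-invariant finite quotients of $G_0$ whose induced $\pi$-actions factor through finite quotients of $\pi$, compatibly with the cofiltered limit structure. This is the pyknotic analogue of the classical fact that continuous actions of profinite groups on finite discrete objects factor through finite quotients, and ultimately hinges on the coherence of profinite pyknotic sets (Theorem~\ref{pi-finite-coh}).
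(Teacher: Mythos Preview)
Your argument is correct and follows the same overall architecture as the paper's proof: identify $\Gamma \simeq BH$ via Proposition~\ref{EM}, realize $K^{/\Gamma}(G,n)$ as the homotopy quotient $K(G_0,n)\mathbin{/\mkern-6mu/} H$, write $G_0$ as a cofiltered limit of finite $H$-invariant quotients (the paper cites \cite[Lemma~5.3.3(c)]{Profinite} for precisely the point you flag as the main obstacle), and then use Lemma~\ref{qprelim} to commute the quotient past the limit.

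The one genuine divergence is in the endgame. The paper handles each finite stage by a two-step quotient: it lets $J\trianglelefteq H$ be the kernel of the action on $G_0/N$, so that $K(G_0/N,n)\mathbin{/\mkern-6mu/} J \simeq BJ \times K(G_0/N,n)$ lies in $\operatorname{Pro}(S_\pi)$, and then takes a further quotient by the finite group $H/J$. Your route instead uses the base-change identity
\[
K(G_{0,i},n)\mathbin{/\mkern-6mu/}\pi \;\simeq\; \bigl(K(G_{0,i},n)\mathbin{/\mkern-6mu/} H_{j(i)}\bigr)\times_{BH_{j(i)}}\Gamma,
\]
which is an immediate consequence of Theorem~\ref{G-action} (restriction along $\pi\twoheadrightarrow H_{j(i)}$ corresponds to pullback along $B\pi\to BH_{j(i)}$), and then invokes closure of $\operatorname{Pro}(S_\pi)$ under pullbacks. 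This is arguably cleaner: it sidesteps having to check that the $(H/J)$-action is compatible with a chosen pro-$\pi$-finite presentation of $BJ\times K(G_0/N,n)$, a point the paper glosses over. You also supply the forward direction explicitly via the long exact sequence and Corollary~\ref{limhom}, whereas the paper's written proof addresses only the backward implication.
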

\begin{proof}
Firsty, from \ref{EM}, we know that $ 1\xrightarrow{\zeta} \Gamma$ is equivalent to $1\rightarrow BH$ for some group object $H$ in $\text{Disc}(\textbf{Pyk}(\mathcal{S}))$(Actually $H$ is $\pi_1(\Gamma, \zeta)$). Since $\Gamma \in Pro(S_\pi)$ from  \ref{otherside} it follows that $H$ is a profinite group.

Next, we consider the pullback of the map $K^{/\Gamma}(G, n) \rightarrow BH$ along $\zeta$:
\[
\begin{tikzcd}
	K \arrow[r] \arrow[d] & {K^{/\Gamma}(G, n)} \arrow[d] \\
	1 \arrow[r, "\zeta"] & BH
\end{tikzcd}
\]
This gives rise to a fiber sequence 
$$ K \rightarrow K^{/\Gamma}(G, n) \rightarrow BH. $$

From {\cite[Theorem 3.17]{bundles}}, this fiber sequence exhibits $K^{/\Gamma}(G, n)$ as the homotopy quotient $K \mathbin{/\mkern-6mu/} H$. By \ref{EM-transfer}, we know that $K \simeq K(G_0, n)$, where $G_0 = G \times_\Gamma \zeta$. Under the functor $\pi_n$ , the action of $H$ on $K(G_0, n)$ gives rise to an action of $H$ on $G_0$ since from \ref{disobj} $\text{Disc}(\mathfrak{X}/K(G_0, n)) \simeq Disc(\mathfrak{X})$ .

From {\cite[Lemma 5.3.3(c)]{Profinite}}, we know that $G_0$ can be written as an inverse limit of finite continuous $H$-modules, i.e., $G_0 \simeq \varprojlim G_0 / N$ where each $N$ is clopen. Applying \ref{limhom}, we obtain $K(G_0, n) \simeq \varprojlim K(G_0/N, n)$.

Next, define $K_N^{/\Gamma} := K(G_0/N, n) \mathbin{/\mkern-6mu/}H$  then from \ref{qprelim}, we deduce that 
$$ K^{/\Gamma}(G, n) \simeq \varprojlim K_N^{/\Gamma}, $$
. Since $G_0 / N$ is finite and $H$ is profinite, the action of $H$ on $G_0/N$ factors through some finite quotient $H/J$. Therefore, $J$ acts trivially on $G_0/N$.

Now, 
$$ K_J^N := K(G_0/N, n) \mathbin{/\mkern-6mu/} J \simeq BJ \times K(G_0/N, n), $$
lies in $Pro(S_\pi)$, as it is a product of objects in $Pro(S_\pi)$ (since $J$ is profinite as a closed subgroup of a profinite group) $$K^N_J:=\varprojlim F_J^N$$ where $F_J^N$ are $\pi$-finite. Now, 
$$ K^{/\Gamma}(G, n) \simeq \varprojlim K_N^{/\Gamma} \simeq \varprojlim (K(G_0/N, n) \mathbin{/\mkern-6mu/} H) \simeq \varprojlim K^N_J \mathbin{/\mkern-6mu/} (H / J) $$ 
Finally, from \ref{qprelim} $K_J^N \mathbin{/\mkern-6mu/} (H / J) \simeq \varprojlim (F_N \mathbin{/\mkern-6mu/} (H / J))$, and by \ref{pifiniteq}, $F_N \mathbin{/\mkern-6mu/} (H / J)$ are $\pi$-finite therefore $K^{/\Gamma}(G, n)\simeq \varprojlim F_N \mathbin{/\mkern-6mu/} (H / J) $ belongs to $ Pro(S_\pi)$.
\end{proof}

We begin by showing that the homotopy groups of pro \(\pi\)-finite spaces are indeed profinite.

\begin{theorem}
\label{otherside}
Let \( X \) be an object in \(\text{Pro}(S_\pi)\). Then for every point \(x_0\) in \( X \), the pyknotic homotopy group \(\pi_j(X, x_0)\) (computed in \(\textbf{Pyk}(\mathcal{S})\)) is profinite for all \( j \).
\end{theorem}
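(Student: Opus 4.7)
The plan is to apply Corollary \ref{limhom} directly and then identify the resulting inverse system as profinite. First, since $X \in \operatorname{Pro}(S_\pi)$, by definition of the pro-category I can write $X \simeq \varprojlim_i X_i$ as a cofiltered limit of $\pi$-finite spaces. By the description of the derived Stone embedding recorded in the final remark of Section 3.3, this pro-object corresponds inside $\mathbf{Pyk}(\mathcal{S})$ to the cofiltered limit $\varprojlim_i X_i^{\mathrm{disc}}$ taken under the discrete sheaf functor, so I am reduced to computing $\pi_j(\varprojlim_i X_i^{\mathrm{disc}}, x_0)$ in $\mathbf{Pyk}(\mathcal{S})$.

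Next, given the basepoint $x_0 \colon * \to X$ with induced basepoints $p_i(x_0) \colon * \to X_i^{\mathrm{disc}}$, I would apply Corollary \ref{limhom} to obtain the identification
\[
\pi_j(X, x_0) \simeq \varprojlim_i \pi_j(X_i^{\mathrm{disc}}, p_i(x_0))
\]
in $\mathbf{Pyk}(\mathcal{S})$. Since the discrete embedding $\mathcal{S} \hookrightarrow \mathbf{Pyk}(\mathcal{S})$ preserves homotopy groups (Remark \ref{hompre}) and each $X_i$ is $\pi$-finite, every term $\pi_j(X_i^{\mathrm{disc}}, p_i(x_0))$ is the discrete pyknotic image of a finite set (or a finite group for $j \ge 1$).

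Consequently $\pi_j(X, x_0)$ is exhibited as a cofiltered limit of finite discrete (group) objects in $\mathbf{Pyk}(\mathcal{S})$, which is by definition a profinite (group) object in pyknotic spaces, establishing the theorem. I do not expect a genuine obstacle here: the real technical content — interchanging $\pi_j$ with the cofiltered limit — has already been isolated and proved in Corollary \ref{limhom}, which itself relied on coherence of pro-$\pi$-finite objects (Theorem \ref{pi-finite-coh}) together with conservativity of the global sections functor on coherent objects (Corollary \ref{consev}). The present theorem is essentially a repackaging of that identification.
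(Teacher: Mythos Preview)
Your proposal is correct and matches the paper's approach: the paper's proof is a one-line appeal to Corollary~\ref{limhom}, and you have simply unpacked that citation by spelling out why the inverse system it produces consists of finite (group) objects and hence witnesses profiniteness. There is nothing to add.
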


\begin{proof}
This is an immediate consequence of \ref{limhom}.
\end{proof}

Next, we prove a partial converse to Theorem \ref{otherside}.

\begin{theorem}
\label{oneside}
Let \( X \in \textbf{Pyk}(\mathcal{S}) \) be a connected pyknotic space, and let \( * \xrightarrow{\zeta} X \) be a point in \( X \). If \(\pi_j(X, \zeta) \in \text{Disc}(X)\) is profinite for all \( j \), then \( X \) can be expressed as an inverse limit of \(\pi\)-finite spaces. In other words, \( X \in \text{Pro}(S_\pi) \hookrightarrow \textbf{Pyk}(\text{An})\).
\end{theorem}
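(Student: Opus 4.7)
My plan is to induct on the truncation degree $n$, using the Postnikov-style pullback of Lemma~\ref{pulldia} to build $X$ one stage at a time from relative Eilenberg--MacLane objects, each of which stays in $\operatorname{Pro}(S_\pi)$ thanks to Lemma~\ref{EM-Pr}, and invoking the pullback closure of $\operatorname{Pro}(S_\pi)$ in $\mathbf{Pyk}(\mathcal{S})$ established in Section~4.1.

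For the base case $n = 1$, connectedness and $1$-truncation give $X \simeq B G$ with $G := \pi_1(X, \zeta)$ profinite (Proposition~\ref{EM}). Writing $G \simeq \varprojlim_\alpha G_\alpha$ with $G_\alpha$ finite, I would form $Y := \varprojlim_\alpha B G_\alpha \in \operatorname{Pro}(S_\pi)$ and use Corollary~\ref{limhom} to compute $\pi_0(Y) = *$, $\pi_1(Y) \simeq G$, and $\pi_j(Y) = 0$ for $j \geq 2$. Hence $Y$ is a $1$-gerb banded over $G$, and the uniqueness clause of Proposition~\ref{EM} forces $X \simeq Y \in \operatorname{Pro}(S_\pi)$.

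For the inductive step, assume the result for $(n-1)$-truncated spaces. Following the proof of Corollary~\ref{pulltrunc}, view $X$ as a $1$-connected object of $\mathbf{Pyk}(\mathcal{S})/\tau^{\leq 1} X$ and apply Lemma~\ref{pulldia} in that slice; the resulting square descends to a pullback in $\mathbf{Pyk}(\mathcal{S})$ via Lemma~\ref{climit} (the cospan is weakly contractible), namely
\[
\begin{tikzcd}
    X \simeq \tau^{\leq n} X \arrow[r] \arrow[d] & \tau^{\leq n-1} X \arrow[d] \\
    \tau^{\leq 1} X \arrow[r] & K^{/\tau^{\leq 1} X}(\pi_n(X), n+1).
\end{tikzcd}
\]
The base case applies to $\tau^{\leq 1} X$; the induction hypothesis applies to $\tau^{\leq n-1} X$ (whose homotopy groups are those of $X$ in degrees $\leq n-1$ and zero above, hence still profinite); and since the fiber of $\pi_n(X) \in \text{Disc}(\mathbf{Pyk}(\mathcal{S})/\tau^{\leq 1} X)$ at $\zeta$ equals $\pi_n(X, \zeta)$ (profinite by hypothesis), Lemma~\ref{EM-Pr} places $K^{/\tau^{\leq 1} X}(\pi_n(X), n+1)$ in $\operatorname{Pro}(S_\pi)$. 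The pullback closure of $\operatorname{Pro}(S_\pi)$ then delivers $X = \tau^{\leq n}X \in \operatorname{Pro}(S_\pi)$.

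The main obstacle is the base case: the entire induction rests on identifying $BG$ for profinite $G$ with the pyknotic inverse limit $\varprojlim BG_\alpha$, which requires pushing Corollary~\ref{limhom} through every homotopy degree (including $\pi_0$, in order to obtain connectedness of the candidate limit) before Proposition~\ref{EM} can pin down the resulting $1$-gerb as $BG$. Once the base case is secured, the inductive step is essentially a formal consequence of Lemma~\ref{EM-Pr} together with the pullback closure of $\operatorname{Pro}(S_\pi)$ in $\mathbf{Pyk}(\mathcal{S})$.
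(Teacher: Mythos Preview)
Your proposal is correct and follows essentially the same route as the paper: induction on the truncation degree, with the base case handled via Proposition~\ref{EM} and Corollary~\ref{limhom}, and the inductive step via the slice-Postnikov pullback of Lemma~\ref{pulldia}, Lemma~\ref{EM-Pr} for the relative Eilenberg--MacLane corner, and the pullback closure of $\operatorname{Pro}(S_\pi)$. Your explicit invocation of Lemma~\ref{climit} to pass from the pullback in the slice to one in $\mathbf{Pyk}(\mathcal{S})$ is a detail the paper leaves implicit, and your worry about the base case is somewhat overstated---once Corollary~\ref{limhom} is in hand the identification $BG \simeq \varprojlim BG_\alpha$ is immediate from Proposition~\ref{EM}, whereas the genuinely nontrivial input is Lemma~\ref{EM-Pr}.
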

\begin{proof}
We proceed by induction on the truncation degree $n$ of the space.

\textbf{Base case ($n = 1$)}:  
Consider the pointed object \(\zeta: * \rightarrow X\). Using Proposition~\ref{EM}, which characterizes pointed connected spaces in terms of Eilenberg--MacLane spaces, we have that the map \( * \rightarrow X \) is equivalent to \( * \rightarrow K(\pi_1(X, \zeta), 1) \). Applying Proposition~\ref{limhom}, we conclude that if we can represent \(\pi_1(X, \zeta)\) as an inverse limit \(\varprojlim G_i\) of finite groups \(G_i\), then it follows that
\[
K(\pi_1(X, \zeta), 1) \simeq \varprojlim K(G_i, 1) \in \text{Pro}(\mathcal{S}),
\]
\textbf{Induction step}:  
Assume the statement is true for $n$-truncated spaces. Now, let $X$ be an $(n+1)$-truncated space. Note that $X$ is $1$-connected as an object in $\textbf{Pyk}(\text{An})/\tau^{\leq 1}X$. Therefore, we can apply \ref{pulldia} to obtain the following pullback diagram:
\[\begin{tikzcd}
	{\tau^{\leq n}X} & {\tau^{\leq n-1}X} \\
	{\tau^{\leq 1}X} & {K^{/\tau^{\leq 1}X}(\pi_n(X), n+1)}
	\arrow[from=1-1, to=1-2]
	\arrow[from=1-1, to=2-1]
	\arrow["\lrcorner"{anchor=center, pos=0.125}, draw=none, from=1-1, to=2-2]
	\arrow[from=1-2, to=2-2]
	\arrow[from=2-1, to=2-2]
\end{tikzcd}\]
By \ref{EM-Pr} we see that, $K^{/\tau^{\leq 1}X}(\pi_n(X), n+1) \in \text{Pro}(S_\pi)$. Furthermore, by the induction hypothesis, both $\tau^{\leq n-1}X$ and $\tau^{\leq 1}X$ belong to $\text{Pro}(S_\pi)$. Since $\text{Pro}(S_\pi)$ is closed under pullbacks, it follows that $\tau^{\leq n}X \in \text{Pro}(S_\pi)$.
\\
\textbf{Final step}:  
Finally, since $X$ is the inverse limit of its Postnikov tower and each $\tau^{\leq{n}} X$ can be expressed as an inverse limit of $\pi$-finite spaces, then $X$ is also an inverse limit of $\pi$-finite spaces. 
\end{proof}
Having established the necessary lemmas, we can now deduce our main theorem.

\begin{theorem}[Partial Characterization of the Essential Image]
Consider the embedding \(\text{Pro}(S_\pi) \hookrightarrow \textbf{Pyk}(\mathcal{S})\). A %coherent
connected object in \(\textbf{Pyk}(\mathcal{S})\) lies in the essential image of this embedding if and only if its pyknotic homotopy groups at any chosen basepoint are profinite.
%Moreover, if $X$ is connected we don't need to assume $X$ to be coherent. 
\end{theorem}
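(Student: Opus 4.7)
The plan is to obtain the theorem as the straightforward combination of the two results already established in this subsection, namely Theorem \ref{otherside} and Theorem \ref{oneside}. There is no new content beyond packaging these two directions together, so the proof is essentially a one-line appeal.

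For the forward implication, I would argue as follows. Suppose $X \in \mathbf{Pyk}(\mathcal{S})$ is connected, truncated, and lies in the essential image of $\operatorname{Pro}(S_\pi) \hookrightarrow \mathbf{Pyk}(\mathcal{S})$. Then by Theorem \ref{otherside}, which is itself a direct consequence of Corollary \ref{limhom} (inverse limits of $\pi$-finite spaces have pyknotic homotopy groups that compute as the inverse limit of the individual homotopy groups), every pyknotic homotopy group $\pi_j(X,\zeta)$ is a cofiltered limit of finite groups and hence profinite in $\operatorname{Disc}(\mathbf{Pyk}(\mathcal{S}))$.

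For the converse, I would invoke Theorem \ref{oneside}. Given $X$ connected, truncated, with profinite pyknotic homotopy groups at some (equivalently, any) basepoint $\zeta$, Theorem \ref{oneside} already shows $X \in \operatorname{Pro}(S_\pi)$ by induction on the truncation level. The base case handles $K(\pi_1(X,\zeta),1)$ by writing the profinite group $\pi_1(X,\zeta)$ as a cofiltered limit of finite groups and using Corollary \ref{limhom}. The inductive step uses Lemma \ref{pulldia} to express $\tau^{\leq n}X$ as a pullback involving $\tau^{\leq n-1}X$, $\tau^{\leq 1}X$, and the relative Eilenberg--MacLane object $K^{/\tau^{\leq 1}X}(\pi_n(X), n+1)$; the key input is Lemma \ref{EM-Pr}, which ensures this relative Eilenberg--MacLane object lies in $\operatorname{Pro}(S_\pi)$, combined with the closure of $\operatorname{Pro}(S_\pi)$ under pullbacks in $\mathbf{Pyk}(\mathcal{S})$.

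Since the heavy lifting is all contained in the previously proved Theorems \ref{otherside} and \ref{oneside}, the only remaining task in the present proof is to state the equivalence of the two characterizations. There is no real obstacle: the argument reduces to citing the two theorems. The only subtlety worth flagging is that Theorem \ref{oneside} requires $X$ to be \emph{connected} (so that $\tau^{\leq 1} X$ is a $1$-gerbe and $X$ becomes $1$-connected as an object of $\mathbf{Pyk}(\mathcal{S})/\tau^{\leq 1}X$), which is exactly the hypothesis of the theorem being proved.
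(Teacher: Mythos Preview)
Your proposal is correct and matches the paper's own proof exactly: the paper simply writes ``This follows immediately from Theorems \ref{oneside} and \ref{otherside}.'' Your additional unpacking of what those theorems do is accurate and adds helpful context, but the logical content is the same one-line combination of the two directions.
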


\begin{proof}
This follows immediately from Theorems \ref{oneside} and \ref{otherside}.
\end{proof}

\bibliographystyle{alphaurl} % Or 'plainurl', 'abbrvurl', etc.
\bibliography{meta/references} % Name of your .bib file without the extension

@misc{pyknotic,
      title={Pyknotic objects, I. Basic notions}, 
      author={Clark Barwick and Peter Haine},
      year={2019},
      eprint={1904.09966},
      archivePrefix={arXiv},
      primaryClass={math.AG},
      url={https://arxiv.org/abs/1904.09966}, 
}

@book{Profinite,
  title={Profinite Groups},
  author={Ribes, L. and Zalesskii, P.},
  isbn={9783642016424},
  lccn={2010921917},
  series={Ergebnisse der Mathematik und ihrer Grenzgebiete. 3. Folge / A Series of Modern Surveys in Mathematics},
  url={https://books.google.co.il/books?id=u8GWrhdhA2QC},
  year={2010},
  publisher={Springer Berlin Heidelberg}
}

@book{HTT,
  title={Higher Topos Theory (AM-170)},
  author={Lurie, Jacob},
  isbn={9780691140490},
  lccn={2008038170},
  series={Annals of Mathematics Studies},
  url={https://books.google.co.il/books?id=CTe68E8wK4QC},
  year={2009},
  publisher={Princeton University Press}
}

@misc{SAG,
    author = {Lurie, Jacob},
    title = {Spectral Algebraic Geometry},
    year = {2018},
    url = {https://www.math.ias.edu/~lurie/papers/SAG-rootfile.pdf},
}

@misc{DAG-XIII,
    author = {Lurie, Jacob},
    title = {Derived Algebraic Geometry XIII: Rational and p-adic Homotopy Theory},
    year = {2011},
    url = {https://www.math.ias.edu/~lurie/papers/DAG-XIII.pdf},
}

@misc{Conceptual,
    author = {Lurie, Jacob},
    title = {Ultracategories},
    year = {2018},
    url = {https://www.math.ias.edu/~lurie/papers/Conceptual.pdf},
}

@misc{lurie_mit,
    author = {Lurie, Jacob},
    title = {The Arithmetic Square (Lecture 32, MIT)},
    year = {2008},
    url = {https://math.mit.edu/~lurie/917notes/Lecture32.pdf},
}

@article{bundles,
  title={{Principal $\infty$-bundles: general theory}},
  author={Nikolaus, Thomas and Schreiber, Urs and Stevenson, Danny},
  journal={Journal of Homotopy and Related Structures},
  volume={10},
  number={4},
  pages={749--801},
  year={2015},
  publisher={Springer}
}

@article{contralimits,
  title={{$\infty$-operads as analytic monads}},
  author={Gepner, David and Haugseng, Rune and Kock, Joachim},
  journal={International Mathematics Research Notices},
  volume={2022},
  number={16},
  pages={12516--12624},
  year={2022},
  publisher={Oxford University Press}
}

@misc{scholze,
    author = {Peter Scholze},
    title = {Answer to "Infinity-categorical analogue of compact Hausdorff"},
    year = {2021},
    note = {\url{https://mathoverflow.net/questions/379191/infinity-categorical-analogue-of-compact-hausdorff}},
}

@book{stonem,
  title={Stone Spaces},
  author={Johnstone, P.T.},
  isbn={9780521337793},
  lccn={lc82004506},
  series={Cambridge Studies in Advanced Mathematics},
  url={https://books.google.ge/books?id=CiWwoLNbpykC},
  year={1982},
  publisher={Cambridge University Press}
}

@misc{condensed,
    author = {Dustin Clausen and Peter Scholze},
    title = {Condensed Mathematics and Complex Geometry}
}

@misc{condensed2,
    author = {Peter Scholze},
    title = {Lectures on condensed mathematics}
}

\end{document}